\newtheorem{theorem}{Theorem}[section]
\newtheorem{lemma}[theorem]{Lemma}
\newtheorem{remark}[theorem]{Remark}
\newtheorem{remarks}[theorem]{Remarks}
\newtheorem{proposition}[theorem]{Proposition}
\newtheorem{corollary}[theorem]{Corollary}
\newcommand{\dE}{\mathbb {E}}
\newcommand{\dP}{\mathbb {P}}
\newcommand{\dZ}{\mathbb {Z}}
\newcommand{\dN}{\mathbb {N}}
\newcommand{\dR}{\mathbb {R}}
\newcommand{\bP}{\mathbf{P}}
\newcommand{\bE}{\mathbf{E}}
\newcommand{\cA}{\mathcal {A}}
\newcommand{\cB}{\mathcal {B}}
\newcommand{\cC}{\mathcal {C}}
\newcommand{\cE}{\mathcal {E}}
\newcommand{\cG}{\mathcal {G}}
\newcommand{\cM}{\mathcal {M}}
\newcommand{\cN}{\mathcal {N}}
\newcommand{\cO}{\mathcal {O}}
\newcommand{\cP}{\mathcal {P}}
\newcommand{\cS}{\mathcal {S}}
\newcommand{\cT}{\mathcal {T}}
\newcommand{\cW}{\mathcal {W}}
\newcommand{\cX}{\mathcal {X}}
\newcommand{\cZ}{\mathcal {Z}}
\newcommand{\h}{\mathfrak{h}}
\newcommand{\phid}{\varphi_{\mathbb{T}_d}}
\newcommand{\Td}{\mathbb{T}_d}
\newcommand{\gtd}{G_{\Td}}
\newcommand{\Cov}{\text{Cov}}
\newcommand{\Eh}{E_{\phid}^{\geq h}}
\newcommand{\Ch}{\cC_{\circ}^{h}}
\newcommand{\Chplus}{\cC_{\circ}^{h,+}}
\newcommand{\Chplusp}{\cC_{\circ,p}^{h,+}}
\newcommand{\oc}{\overline{\circ}}
\newcommand{\stdom}{\underset{st.}{\leq}}
\newcommand{\bPc}{\bP^{\Ch}}
\newcommand{\dPd}{\dP^{\Td}}
\newcommand{\dPds}{\dP^{\Td,\infty}}
\newcommand{\sP}{\mathsf{P}}
\newcommand{\sE}{\mathsf{E}}
\newcommand{\sPr}{\mathsf{P}^{{renew}}}
\newcommand{\sEr}{\mathsf{E}^{{renew}}}
\newcommand{\Ci}{C_1}
\newcommand{\Cii}{C_2}
\newcommand{\Cv}{C_3}
\newcommand{\Cvi}{C_4}
\newcommand{\Cvii}{C_5}
\newcommand{\Cviii}{C_6}
\newcommand{\Cix}{C_7}
\newcommand{\Cx}{C_{8}}
\newcommand{\Cxi}{C_{9}}
\title{Speed of the random walk on the supercritical Gaussian Free Field percolation on regular trees}
\author{Guillaume Conchon-\,\hspace{-0.3mm}-Kerjan\thanks{Department of Mathematics, King's College London, W2CR 2LS London, UK.\newline Email: \texttt{guillume.conchon-kerjan@kcl.ac.uk}}  
}
\date{}
\begin{document}

\maketitle
\begin{abstract}
In this paper, we study the random walk on a supercritical branching process with an uncountable and unbounded set of types supported on the $d$-regular tree $\Td$ ($d\geq 3$), namely the cluster $\Ch$ of the root in the level set of the Gaussian Free Field (GFF) above an arbitrary value $h\in (-\infty, h_{\star})$. The value $h_{\star}\in (0,\infty)$ is the percolation threshold; in particular, $\Ch$ is infinite with positive probability.
\\
We show that on $\Ch$ conditioned to be infinite, the simple random walk is ballistic, and we give a law of large numbers and a Donsker theorem for its speed. 
\\
To do so, we design a renewal construction that withstands the long-range dependencies in the structure of the tree. This allows us to translate underlying ergodic properties of $\Ch$ into regularity estimates for the random walk. 
\end{abstract}

\section{Introduction}

\subsection{Context and motivation}
\noindent
Perfoming random walks on large random structures allows to reveal some of their geometrical properties, such as their expansion, their connectivity, the presence of traps or bottlenecks, etc. A standard class of such structures are Bernoulli percolation clusters on $\dZ^d$, for which there exists nowadays a rich litterature. It is known that on supercritical clusters, the simple random walk (SRW) is diffusive, and the scaling limit is a Brownian motion~\cite{Barlow, BergerBiskup, SidoraviciusSznitman}. The critical regime is much more delicate and still open. In high dimensions, the scaling of the random walk has been identified~\cite{KozmaNachmias}, and it is conjectured that the scaling limit is a Brownian motion on an integrated super brownian excursion~\cite{BenArousCabezasFribergh, Croydon}. 
\\
Random trees provide a more accessible setting that can give an intuition about percolation on $\dZ^d$ (in particular for large $d$), and which is also interesting in its own right. On Galton-Watson trees, the scaling limit of the random walk has been identified in the critical case~\cite{CroydonKumagai, Kesten}, and it was proved in the supercritical case that the random walk has a positive speed~\cite{LPPergodic}. Since then, much attention has been devoted to biased random walks, i.e. the SRW with a bias towards (or away from) the root of the tree, see for instance~\cite{Aidekon,BenArousFGH,Bowditch,CollevechioHolmesKious,CroydonFriberghKumagai,LPP96}.
\\
\\
In the last two decades, a percolation model with long-range correlations has taken a prominent place in probability, namely the level-set percolation of the Gaussian Free Field (GFF). On an infinite connected graph where the random walk is transient, the GFF is a Gaussian process whose covariance is given by the Green function. Hence, the GFF carries a lot of information on the structure of $\cG$ and on the behaviour of random walks, giving a base motivation for its study. It also has deep structural link with other probabilistic objects, such as 
local times of random walks~\cite{eisenbaum2000,Lupu,SabotTarres} and random interlacements~\cite{Sznitman12RIandGFF,sznitman2012}.  
It has been heavily studied on $\dZ^d$~\cite{DrewitzPrevostRodriguez,DPRCritExp,DuminalGFF2,Muirhead,RodriguezSznitman}, where there is a non-trivial sharp phase transition. 
\\
Recently, level-set percolation has also been subject to much attention on regular trees~\cite{SZ2016,ACregultrees} and Galton-Watson trees~\cite{AS2018,DrewitzGalloPrevost,CernyLocher}. The GFF percolation yields a branching process with a natural notion of fitness: the fitness of a vertex is given by the value of the GFF. It is partly hereditary, and in level-set percolation, only strong enough individuals survive. While the absence of cycles removes some difficulties inherent to finite-dimensional lattices, the long-range dependencies of the GFF, as well as the uncountable and unbounded range of types, bring significant additional challenges compared to Galton-Watson trees, and many standard techniques break down.
\\
Even on regular trees, not much is known on the supercritical regime, apart from the fact that the percolation cluster from the root, when infinite, grows at an exponential rate. The SRW on this root cluster provides a good case study for a random walk on a correlated environment.  The aim of this paper is to bring a thorough understanding of this walk, which in turns provides fine properties on the structure of the percolation cluster. 

\subsection{Setting}\label{subsec:defs}
\noindent
In all this work, we fix an integer $d\geq 3$. We denote $\Td$ the infinite $d$-regular tree rooted at an arbitrary vertex $\circ$. 

\subsubsection{The Gaussian Free Field (GFF) on $\Td$.}

\noindent
The GFF $\phid$ on $\Td$ is a centred Gaussian field $(\phid(x))_{x\in \Td}$ indexed by the vertices of $\Td$, and with covariances given by the Green function $\gtd$ on $\Td$: for all $x,y\in \Td$, $\text{Var}(\phid(x),\phid(y))=\gtd(x,y)$. Recall that 
\[
\gtd(x,y)= \bE_x^{\Td}\left[\sum_{k\geq 0}\mathbf{1}_{\{X_k=y\}}\right]\]
where $(X_k)_{k\geq 0}$ is a (discrete-time) SRW on $\Td$. For a graph $\cG$ and $x\in \cG$, we denote $\bP_{x}^{\cG}$ and $\bE_{x}^{\cG}$ the probability and expectation associated to a SRW $(X_k)_{k\geq 0}$ such that $X_0=x$. 
\\
For $h\in \dR$, let $\Eh:=\{x\in \Td \, \vert \, \phid(x)\geq h\}$ be the \textbf{level-set above $h$}. Let $\Ch$ be the connected component of $\Eh$ containing the root $\circ$. For $x\in \Td$, let $\vert x\vert$ be its height, i.e.~its distance to $\circ$. For $k\geq 0$, denote $\cZ_k^h:=\{x\in \Ch, \vert x\vert=k\}$ the $k$-th generation of $\Ch$. 

\subsubsection{Probability spaces}\label{subsubsec:probaspaces}

\noindent
Write $\dPd$ and $\dE^{\Td}$ for the probability and expectation w.r.t.~$\phid$, $\dPds$ for $\dPd$ conditionally on $\vert \Ch\vert=\infty$, and $\dPd_a$ for $\dPd$ conditionally on $\phid(\circ)=a$, for $a\in \dR$.
\\
The main object of interest in this paper is the SRW on $\Ch$ started at the root. We write $\bP^{\Ch}:=\bP^{\Ch}_{\circ}$ for the quenched probability of the SRW. Our most important result (Theorem~\ref{thm:LLN}) will be stated with respect to the annealed probability measure 
\begin{equation}\label{eqn:defPannealed}
\sP^{h,\infty}(\cdot)=\int \bP^{\Ch}(\cdot) d\dP^{\Td,\infty}. 
\end{equation}
Throughout the paper, we will need a range of auxiliary probability measures. First, we write $\sP^{h}(\cdot)=\int \bP^{\Ch}(\cdot) d\dP^{\Td}$.  
\\
Second, as root of $\Td$, $\circ$ has one more child than any other vertex. For technical reasons, we will need to break this particularity. Let $\oc$ be an arbitrary neighbour of $\circ$. Let $\Td^+$ be the largest subtree rooted at $\circ$  that does not contain $\oc$, so that every vertex of $\Td^+$ has $d-1$ children in $\Td^+$. Write $\Chplus:=\Ch\cap \Td^+$. 
Denote $\sP^{h,+}(\cdot)$ and $\sP^{h,+,\infty}(\cdot):=\sP^{h,+}(\cdot\,\vert\,\vert \Chplus\vert=\infty)$ the corresponding annealed probabilities when the SRW is instead on $\Chplus\cup \{\oc\}$ (hence the edge $\oc$ stays open even if $\phid(\oc)<h$). 
\\
Third, we will also need to condition w.r.t.~the initial value of the GFF. We let
\begin{equation}\label{eqn:defPetcannealed}
\sP^{\star}_a(\cdot):=\sP^{\star}(\cdot \,\vert \, \phid(\circ)=a)
\end{equation}
for  $\sP^{\star}=\sP^{h,\infty},\sP^{h,+}$ or $\sP^{h,+,\infty}$ and for $a\in \dR$. Denote $\sE^{\star}$, and $\sE^{\star}_a$ the corresponding expectations. 


\subsection{Result}\label{subsec:results}
It is known that $\Ch$ undergoes a phase transition  (Theorems 4.3 and 5.1 in~\cite{ACregultrees}). If we define
\begin{equation}\label{eqn:etahdef}
\eta(h):=\dPd(\vert \Ch\vert =\infty),
\end{equation}
then there exists $h_\star=h_\star(d)>0$ such that when $h>h_\star$,  $\eta(h)=0$, and we even have $\limsup_{k\rightarrow +\infty} k^{-1}\log\dPd(\vert \Ch\vert \geq k)<0$. On the contrary, when $h<h_\star$, $\eta(h)>0$ and there exists $\lambda_h>1$ so that $\lim_{k\rightarrow +\infty}\dPd(\lambda_h^k/k^2\leq \vert\cZ_k^h \vert\leq k\lambda_h^k )=\eta(h)$. It was recently shown that $\eta$ is continuous at $h_\star$~\cite{CernyLocher}.

\noindent
In words, in the supercritical regime $h<h_{\star}$, $\Ch$ grows at an exponential rate when it is infinite (which happens with positive probability). Refinements on the growth of $\Ch$ have been established in~\cite{GckGffPublished} (see Section~\ref{sec:basic}),  showing further structural similarities between $\Ch$ and supercritical Galton-Watson trees. 
\\
A natural question then is whether these similarities still hold for finer properties, in particular for the behaviour of the SRW on $\Ch$.  
On a supercritical Galton-Watson tree (with finite mean offspring distribution) conditioned to survive, it is known that the SRW has a positive speed~\cite{LPPergodic}. It is not obvious that the same will hold on $\Ch$, even when knows that the size of its generations asymptotically grow at some rate $\lambda_h>1$. In particular, contrary to Galton-Watson trees, GFF level-sets have long-range dependencies. These may cause traps that prevent the walk from being ballistic, as well as inhomogeneities in the tree structure that would prevent (a.s., or at least with positive probability) the ratio $\vert X_k\vert /k$ to converge to a limit.
\\
The main finding of this paper is that such abnormal behaviour does not occur: we show the existence of a speed that a.s.~does not depend on the realization of $\Ch$.

\begin{theorem}[Strong LLN and annealed CLT]\label{thm:LLN}
For every $h<h_{\star}$, there exists constants $s_h,\sigma_h>0$ such that if $(X_k)_{k\geq 0}$ is a SRW on $\Ch$ started at $\circ$, then $\sP^{h,\infty}$-almost surely,
\begin{equation}\label{eqn:thmLLN}
\left(\frac{\vert X_{\lfloor kt\rfloor}\vert }{k}\right)_{0\leq t\leq 1}{\longrightarrow}\,(s_ht)_{0\leq t\leq 1}
\end{equation}
and under~$\sP^{h,\infty}$,
\begin{equation}\label{eqn:thmCLT}
\left(\frac{\vert X_{\lfloor kt \rfloor}\vert -s_hkt}{\sigma_h\sqrt{k}}\right)_{0\leq t\leq 1}\overset{(d)}{\longrightarrow} (B_t)_{0\leq t\leq 1}
\end{equation}
where $B$ is a standard real Brownian motion and both convergences hold w.r.t.~the Skorokhod metric on $[0,1]$ as $k\rightarrow +\infty$. 
\\
The results hold under $\sP^{h,\infty}_a$ instead of $\sP^{h,\infty}$, for any $a\geq h$.
\end{theorem}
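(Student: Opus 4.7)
The plan is to prove~\eqref{eqn:thmLLN}--\eqref{eqn:thmCLT} through a regeneration/renewal decomposition in the spirit of Lyons--Pemantle--Peres, carefully engineered to withstand the long-range correlations of $\phid$. It is convenient to work first under $\sP^{h,+,\infty}_a$ on the modified cluster $\Chplus$, since by construction each of its vertices has $d-1$ potential children in $\Td^+$, giving a clean branching structure. Once the LLN and invariance principle are obtained there, they transfer to $\sP^{h,\infty}_a$ (and, integrating in $a$, to $\sP^{h,\infty}$) by decomposing the walk on $\Ch$ at its excursions through $\oc$, which contribute a negligible amount to $\vert X_k\vert$ on the time scales considered.

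The central construction runs as follows. Fix $a_0>h$ and a narrow window $I\subset(h,\infty)$ around $a_0$. Call $\tau\ge 1$ a \emph{regeneration time} if $X_\tau=v$ is visited for the first time at step $\tau$, the walker never returns to the parent of $v$ after time $\tau$, and $\phid(v)\in I$. The key point is the tree Markov property of $\phid$: conditionally on $\phid(v)=b$, the restriction of $\phid$ to the forward subtree rooted at $v$ is an independent Gaussian field on that subtree with prescribed boundary value $b$ at $v$. Consequently, conditionally on the regeneration event and on $\phid(v)=b\in I$, the joint law of the forward cluster past $v$, of the walk after time $\tau$, and of the GFF values in the forward subtree coincides with the law of the same objects under $\sP^{h,+,\infty}_b$. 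Iterating yields times $\tau_1<\tau_2<\cdots$ at which the pairs $(\tau_{i+1}-\tau_i,\vert X_{\tau_{i+1}}\vert-\vert X_{\tau_i}\vert)$ form an i.i.d.\ sequence for $i\ge 1$ (after integrating over $b\in I$). The LLN~\eqref{eqn:thmLLN} then follows from the renewal reward theorem with $s_h=\sE[\vert X_{\tau_2}\vert-\vert X_{\tau_1}\vert]/\sE[\tau_2-\tau_1]$, and~\eqref{eqn:thmCLT} from Donsker's theorem for partial sums of i.i.d.\ variables together with the standard interpolation between consecutive $\tau_i$'s (using that $\vert X_\cdot\vert$ moves by at most one per step).

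The main obstacle is supplying the three quantitative estimates sustaining this scheme: (i)~almost surely under $\sP^{h,+,\infty}_a$ a first regeneration time exists (and hence infinitely many do), (ii)~$\sE[\tau_2-\tau_1]<\infty$, and (iii)~$\sE[(\tau_2-\tau_1)^2]<\infty$. On Galton--Watson trees these rest on the independence of descendant subtrees, which is badly broken here because the environment past a candidate regeneration vertex $v$ is correlated with the rest of $\phid$ through more than just $\phid(v)$. I would address this by exploiting the harmonic-plus-independent decomposition of $\phid$ on $\Td$: after conditioning on $\phid(v)=b$, the forward field splits as a deterministic harmonic extension plus an independent zero-boundary Gaussian field, and the narrowness of $I$ controls the first term uniformly, reducing the problem to bounds on a single reference ensemble. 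The heart of the argument then becomes quantitative control of the \emph{traps} of $\Chplus$, i.e.\ subtrees where the walker may spend an anomalously long time before escaping: using the structural estimates of Section~\ref{sec:basic}, in particular the exponential growth $\vert\cZ_k^h\vert\asymp\lambda_h^k$ of the generations of $\Chplus$ up to polynomial factors and many-to-one style identities for the branching process with uncountable types, one should obtain stretched-exponential tails for the trap durations uniformly in the regeneration conditioning, enough to close both (ii) and (iii). This last step is where the bulk of the technical work lies, because each structural estimate for $\Chplus$ must be upgraded from an unconditional statement to one compatible with the conditioning imposed at the regeneration vertices.
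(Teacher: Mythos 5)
Your overall architecture (renewal decomposition, transfer from $\Chplus$ to $\Ch$, stretched-exponential moments from the exponential growth of $\cZ_k^{h,+}$) tracks the paper's Steps I--II, but there is a genuine gap in the step where you pass from the renewal structure to a law of large numbers. You define a regeneration time $\tau$ to be a renewal time at which additionally $\phid(X_\tau)\in I$, and then assert that the pairs $(\tau_{i+1}-\tau_i,\vert X_{\tau_{i+1}}\vert-\vert X_{\tau_i}\vert)$ become i.i.d.\ ``after integrating over $b\in I$''. This is false: conditionally on $\phid(X_{\tau_i})=b$, the law of the $(i+1)$-th piece genuinely depends on $b$ (via $\sP^{h,+,\infty}_b$), and the values $\phid(X_{\tau_i})$ do not become i.i.d.\ merely by restricting them to a window $I$ of positive Lebesgue measure; they form a Markov chain on $I$ whose transition kernel depends on the exact value $b$. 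Integration against the one-step distribution of $\phid(X_{\tau_1})$ does not produce a stationary regime, so the inter-regeneration blocks are a hidden Markov chain, not an i.i.d.\ sequence. In a discrete state space one could pin $\phid(X_\tau)$ to a single atom and obtain exact i.i.d.\ regeneration, and in a continuous state space the standard fix is Nummelin splitting (minorization); your proposal does neither. This is precisely the obstruction the paper flags after Proposition~\ref{lem:expomomentsdiscovery}: ``the pieces of trajectory $(X_k)_{\tau_i<k\leq\tau_{i+1}}$ for $i\geq 1$ are not i.i.d.''.

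The paper's resolution is to embrace the Markov dependence rather than try to remove it by conditioning on a window: it defines $Y_i=(\phid(X_{\tau_{i+1}}),T^{(\tau)}_i,W^{(\tau)}_i)$ as a Markov chain on $[h,\infty)\times\cM$, proves it is positive Harris recurrent with a Doeblin minorization on $[h,h+\Cxi]\times\cM$ (Lemma~\ref{lem:Vpotential}, a close cousin of the Nummelin splitting you would need), and then applies Meyn--Tweedie's ergodic LLN/CLT for additive functionals (Proposition~\ref{prop:CLThtau}). Your drift-function ingredients (stretched-exponential tails for $\tau_1$, Gaussian control of $\phid$) are indeed what makes the drift condition~\eqref{eqn:driftcondition} work, so that part of your sketch is in the right place; what is missing is the recognition that the regeneration must be engineered through the minorization (i.e.\ at the random times where the chain resamples according to $\nu_\star$), not by conditioning the GFF value into a narrow window. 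Without that, the renewal-reward and Donsker steps in your paragraph two are applied to a sequence that is not i.i.d., and the argument does not close.
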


\noindent
Remark that~\eqref{eqn:thmLLN} yields immediatly a quenched LLN for the speed of the random walk (recall~\eqref{eqn:defPannealed}): $\dPds$-almost surely, $\bPc$ is such that~\eqref{eqn:thmLLN} holds.

\subsection{Proof strategy}\label{subsec:proofstrategy}

\noindent
Throughout the paper, we use an equivalent definition of $\phid$ which displays its branching nature on $\Td$: for every vertex $x$ on $\Td$, conditionally on $\phid(x)$, the values of $\phid$ on the children of $x$ are i.i.d.~with an explicit gaussian distribution (Proposition~\ref{prop:recursivegfftrees}). This gives a recursive construction of $\phid$, starting from the root and going from one generation to the next.
\\
\\
\textbf{I. Transience.} The first step is to show the transience of the SRW (Section~\ref{sec:transience}). For technical purposes, we prove the transience on $\Chplus$ instead of $\Ch$ (recall Section~\ref{subsubsec:probaspaces}). 
We also need to quantify uniformly this transience. For $\delta >0$, say that a rooted tree $T$ with root $r$ is $\delta$\textbf{-transient} if $\bP_r^T(\forall k>0, \, X_k\neq r)\geq \delta$, that is, the SRW on $T$ started at the root has a probability at least $\delta$ to never return to its initial location. For any $\delta >0$, and $a\in \dR$, let 
\begin{equation}\label{eqn:qhdeltadef}
q_{h,\delta}(a):=\dP_a^{\Td}(\Chplus\text{ is not $\delta$-transient}).
\end{equation}

\begin{proposition}\label{prop:transienceintro}
For $\delta_0$ small enough (that only depends on $d$ and $h$), there exists $\varepsilon >0$ so that for every $\delta \in (0,\delta_0)$, for every $a\geq h$, 
$q_{h,\delta}(a)< 1-\varepsilon$.
\end{proposition}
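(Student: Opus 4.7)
The plan is to reduce the uniform lower bound to the case $a=h$ via a stochastic monotonicity argument, deduce that $\Chplus$ is transient with positive probability from the known exponential growth of the cluster, and then translate transience into $\delta$-transience for small $\delta$ through an effective-conductance estimate.

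\emph{Monotonicity in $a$.} By the Gaussian Markov description of $\phid$ (Proposition~\ref{prop:recursivegfftrees}), under $\dP^{\Td}_a$ one may write $\phid(x)=a\cdot\gtd(x,\circ)/\gtd(\circ,\circ)+\psi(x)$, where $\psi$ is a centred Gaussian field whose law does not depend on $a$. Since $\gtd\geq 0$, the measures $\dP^{\Td}_a$ are stochastically nondecreasing in $a$; coupling two values $a\leq a'$ on the same $\psi$ produces an inclusion between the corresponding level clusters in $\Td^+$, so by Rayleigh's monotonicity principle the effective conductance $\cC:=\cC_{\Chplus}(\circ\leftrightarrow\infty)$ is pointwise nondecreasing in $a$. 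Therefore $\dP^{\Td}_a(\cC\geq t)\geq \dP^{\Td}_h(\cC\geq t)$ for every $a\geq h$ and $t\geq 0$, and it suffices to prove the lower bound at $a=h$.

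\emph{Transience and conclusion.} For $h<h_\star$ one has $\dP^{\Td}_h(|\Chplus|=\infty)=:\eta^+>0$; moreover, the growth estimate recalled in Section~\ref{subsec:results} (from~\cite{ACregultrees}, refined in~\cite{GckGffPublished}) guarantees that a.s.\ on survival, $|\cZ_k^{h}|\geq \lambda_h^k/k^2$ for all $k$ large enough, with $\lambda_h>1$, so the branching number of $\Chplus$ is a.s.\ at least $\lambda_h>1$ on that event. By Lyons' criterion the SRW on $\Chplus$ is then a.s.\ transient on survival, hence $\dP^{\Td}_h(\cC>0)\geq \eta^+$. By continuity of measure there exists $t_0>0$ with $\varepsilon:=\dP^{\Td}_h(\cC\geq t_0)\geq \eta^+/2>0$. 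Since $\circ$ has at most $d-1$ children in $\Td^+$, its degree in $\Chplus$ is at most $d-1$, and the escape probability from $\circ$ on $\Chplus$ equals $\cC/\deg_{\Chplus}(\circ)\geq \cC/(d-1)$. Setting $\delta_0:=t_0/(d-1)$, for any $a\geq h$ and $\delta\in(0,\delta_0)$,
\[
\dP^{\Td}_a(\Chplus\text{ is $\delta$-transient})\geq \dP^{\Td}_a(\cC\geq t_0)\geq \dP^{\Td}_h(\cC\geq t_0)=\varepsilon,
\]
which is exactly $q_{h,\delta}(a)\leq 1-\varepsilon$.

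\emph{Main obstacle.} The delicate point is the first step of the transience argument, namely upgrading the asymptotics of $|\cZ_k^h|$ (stated in probability in the introduction) into an almost-sure lower bound on the branching number of $\Chplus$ on the survival event. If a sufficiently strong pathwise version is not yet available at this stage of the paper, a robust fallback is to embed a supercritical Galton--Watson subtree into $\Chplus$ by thresholding the field at some $h'\in(h,h_\star)$ and keeping only descendants whose value is $\geq h'$, and then invoke the classical transience-on-survival result for Galton--Watson trees to secure $\dP^{\Td}_h(\cC>0)>0$ without having to control the branching number directly.
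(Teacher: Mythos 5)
Your overall architecture is genuinely different from the paper's, and the part of it that is correct is arguably cleaner. The paper fixes a small $\delta_0$ so that $\int q_{h,2d\delta_0}\,d\nu<1$, invokes Lemma~\ref{lem:monotonicityphid} to show $a\mapsto q_{h,2d\delta_0}(a)$ is eventually small, and then \emph{nests}: for any $a\geq h$ it forces a child $z$ of $\circ$ with $\phid(z)>a_1$ whose subtree is $2d\delta_0$-transient, losing a factor $2d$ in the process. You instead push monotonicity through the effective conductance $\cC$, which by Rayleigh's principle is a.s.\ non-decreasing under the coupling of Proposition~\ref{prop:recursivegfftrees}, and then use the deterministic bound $\deg_{\Chplus}(\circ)\leq d-1$ to convert $\{\cC\geq t_0\}$ into $\delta$-transience for all $\delta\leq t_0/(d-1)$. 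This is a neat way to get uniformity in $a$ in a single stroke, and I note that working with $\cC$ (monotone under edge addition) rather than with the escape probability itself (which is \emph{not} monotone under adding vertices to the cluster — adding a pendant leaf at $\circ$ increases $\deg(\circ)$ without changing $\cC$ and strictly lowers the escape probability) sidesteps a subtlety that the paper's appeal to Lemma~\ref{lem:monotonicityphid} for the event $\{\Chplus\text{ is $\delta$-transient}\}$ does not.

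However, your transience step has a genuine gap, and the fallback you sketch does not close it. You claim that the a.s.\ eventual lower bound $\lvert\cZ_k^{h}\rvert\geq\lambda_h^k/k^2$ implies $\mathrm{br}(\Chplus)\geq\lambda_h$. This is backwards: $\mathrm{br}(T)=\inf\{\lambda:\inf_\Pi\sum_{z\in\Pi}\lambda^{-\h_T(z)}=0\}$, the generations $\cZ_k^{h}$ are themselves cutsets, and a lower bound on $\lvert\cZ_k^h\rvert$ only tells you that $\sum_{z\in\cZ_k^h}\lambda^{-k}\geq 1/k^2$ for $\lambda=\lambda_h$, which gives no control on the infimum over \emph{all} cutsets. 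Indeed a tree consisting of an infinite ray carrying large finite bushes can have exponential generation growth and branching number $1$, because the single ray vertex at depth $k$ is a cutset. Lower bounds on the branching number require ruling out cheap cutsets, and the paper's Lemma~\ref{lem:transience} does exactly this via the fixed-point structure of $R_h$: it shows that for every $p>1/\lambda_h$ the $p$-percolation on $\Chplus$ survives with positive quenched probability on almost every infinite realization, hence $p_c(\Chplus)\leq 1/\lambda_h$, hence $\mathrm{br}(\Chplus)\geq\lambda_h>1$ by Lyons' theorem. Your fallback — threshold at $h'\in(h,h_\star)$ and keep descendants with value $\geq h'$ — does not produce a Galton--Watson tree either, since that is again a GFF level-set cluster whose offspring law at a vertex depends on that vertex's continuous type; a crude domination by $\mathrm{Bin}(d-1,\,\inf_{a\geq h'}\dP_a^{\Td}(\phid(\text{child})\geq h'))$ has no reason to be supercritical. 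To close the gap you would need either to cite Lemma~\ref{lem:transience} as a black box (in which case the rest of your argument goes through), or to reproduce the $R_h$/percolation argument, which is substantially more than reading off generation sizes.
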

\noindent
The proof goes by showing that the branching number of $\Chplus$ is larger than $1$, using precise estimates on the growth rate from Section~\ref{sec:basic}. This classically implies the transience. 
\\
\\
Then if one could find an invariant measure for the environment seen from the random walker (i.e. we re-root $\Td$ at $X_n$, for $n\geq 0$), one could derive a law of large numbers for the speed as was done for the SRW on supercritical Galton-Watson trees in~\cite{LPPergodic}, and for the biased random walk on the same trees in~\cite{Aidekon}. Unfortunately, we have not been able to find such a measure, as the fact that vertices carry random variables ($\phid$) significantly complicates the matter, and somehow breaks the reversibility. Besides, proving only the existence of this measure would not be enough: one also needs to ensure that $s_h>0$. We discuss our attempts in Section~\ref{subsec:invarmeasurenomore}.
\\
\\
\textbf{II. Renewal.} Instead, we prove a stronger assertion than ballisticity, namely that the SRW on $\Chplus$ conditioned to be infinite has renewal times with stretched exponential moments.  
We say that $R\geq 0$ is a \textbf{renewal time} of $(X_k)$ if $\max_{k\leq R-1}\vert X_{k}\vert<\min_{k\geq R}\vert X_{k}\vert$. In particular, the SRW goes through the edge $(X_{R-1},X_{R})$ only once. Let $(\tau_i)_{i\geq 1}$ denote the renewal times of $(X_k)$, with $\tau_i=+\infty$ if there are at most $i-1$ renewal times. Recall the definition of $\sP^{h,+,\infty}_a $ from~\eqref{eqn:defPetcannealed} as the annealed probability conditionally on $\Chplus$ being infinite and $\phid(\circ)=a$. We prove the following Proposition in Section~\ref{sec:renewal}.
\begin{proposition}\label{lem:expomomentsdiscovery}
There exist constants $\Ci,\Cii >0$ such that for every $a\geq h$ and $k\geq 1$, 
$$\sP^{h,+,\infty}_a(\tau_1\geq k)\leq \Ci e^{-\Cii k^{1/6}}.$$
\end{proposition}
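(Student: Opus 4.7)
The plan is to realise $\tau_1$ as the first success in a sequence of Bernoulli attempts placed at the first-passage times to new heights, and to control how many such attempts fit inside time $k$.

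Introduce $\sigma_m := \inf\{n\geq 0 : |X_n| = m\}$, $Y_m := X_{\sigma_m}$ on $\{\sigma_m<\infty\}$, and let $\mathcal{R}_m$ be the event that the walker never revisits the parent of $Y_m$ after time $\sigma_m$. Since $|Y_m| = m$ is by construction a new maximum of $|X|$, $\mathcal{R}_m$ makes $\sigma_m$ a renewal time, so
$$\{\tau_1 > k\} \subseteq \bigcap_{m\,:\,\sigma_m \leq k} \mathcal{R}_m^c.$$
The first key step is to lower bound each attempt uniformly. By the branching representation of $\phid$ on $\Td$ (Proposition~\ref{prop:recursivegfftrees} to come), conditionally on the $\sigma$-algebra $\mathcal{F}_{\sigma_m}$ generated by the walker's trajectory and the GFF values explored up to time $\sigma_m$, the subtree of $\Chplus$ rooted at $Y_m$ is distributed as $\Chplus$ with root value $\phid(Y_m) \geq h$, independently of everything on the side of the spine $\circ \to Y_m$. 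Applying Proposition~\ref{prop:transienceintro} with $\delta = \delta_0$ then yields
$$\sP^{h,+,\infty}_a(\mathcal{R}_m \mid \mathcal{F}_{\sigma_m}) \geq \varepsilon \delta_0 =: \rho > 0,$$
provided one transfers the $\delta_0$-transience lower bound through the conditioning $|\Chplus|=\infty$; this should follow from a spine decomposition identifying the subtree below $Y_m$ as either an unconditioned or a conditioned-to-survive copy of $\Chplus$, both of which are $\delta_0$-transient with uniform positive probability (possibly with adjusted constants). Iterating the attempt-by-attempt bound gives
$$\sP^{h,+,\infty}_a(\tau_1 > \sigma_M) \leq (1-\rho)^M \qquad \text{for every } M \geq 1.$$

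It therefore remains to bound $\sP^{h,+,\infty}_a(\sigma_M > k)$ by a stretched exponential. Taking $M$ of order $k^{1/6}$, one wants
$$\sP^{h,+,\infty}_a(\sigma_M > k) \leq \Ci\, e^{-\Cii k^{1/6}},$$
and this hitting-time estimate is the main obstacle: one cannot appeal to a law of large numbers for the walker, since that is precisely what the paper is ultimately trying to establish. The natural route is to combine the structural bounds from Section~\ref{sec:basic} on the growth of $\Chplus$ — in particular, that anomalously thin subtrees appear only sparsely along any given path — with the uniform $\delta_0$-transience, in order to show that the walker advances through each ``good'' height interval in time with exponential moments, while ``bad'' intervals, which may cost polynomially more time, have only stretched-exponentially small density. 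The long-range correlations of the GFF rule out direct independent-environment arguments and are the principal technical difficulty; the exponent $1/6$ then emerges from optimising the trade-off between the polynomial time cost of bad regions, their tail density, and the exponential gain from the $(1-\rho)^M$ factor.
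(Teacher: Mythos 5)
Your high-level plan — attempt-by-attempt escapes at new height records, combined with a hitting-time estimate forcing $\sigma_M\leq k$ for $M\sim k^{1/6}$ — is the right shape of argument, and indeed the paper's proof follows the same outline. But there are two substantive gaps.

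\textbf{The iteration to $(1-\rho)^M$ is not valid as stated.} You claim $\sP^{h,+,\infty}_a(\mathcal{R}_m\mid\mathcal{F}_{\sigma_m})\geq\rho$ and then ``iterate'' to get $\sP(\cap_{m\leq M}\mathcal{R}_m^c)\leq(1-\rho)^M$. The problem is that $\mathcal{R}_{m'}^c$ for $m'<m$ is \emph{not} $\mathcal{F}_{\sigma_m}$-measurable: it is a statement about the walker's entire future after $\sigma_{m'}$, which may be unresolved at time $\sigma_m$. Conditioning on it can therefore be a constraint on the post-$\sigma_m$ trajectory that makes $\mathcal{R}_m$ impossible. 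Concretely, suppose the walker goes straight up $\circ\to Y_1\to Y_2\to\cdots\to Y_M$ in its first $M$ steps, so that $\sigma_m=m$ and no $\mathcal{R}_{m'}^c$ has yet been triggered. Conditioned on $\mathcal{R}_1^c$ (the walker must return to $\circ$ at some time $>1$), the walker is forced in the future to pass through $\overline{Y_M}$, and hence $\sP(\mathcal{R}_M\mid\mathcal{F}_{\sigma_M},\mathcal{R}_1^c,\dots,\mathcal{R}_{M-1}^c)=0$ on this part of the space. So the naive chain rule does not produce a factor $\leq 1-\rho$ at each step. This is precisely why the paper decomposes the trajectory into excursions and places its escape attempts at the times $s_1<s_2<\cdots$ where the walker reaches a fresh record \emph{after the previous excursion has fully concluded}: at those times all earlier failures are in the past, hence measurable, and the per-attempt bound iterates cleanly to give geometric tails on the excursion count $i_0$. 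But then one still needs to control the overshoot $m_i$ of each excursion, which is the entire content of the Grimmett--Kesten-type estimate (Lemma~\ref{lem:grimmettkesten}): on any path of length $k$ from the root in $\Chplus$ there are linearly many uniformly transient side-subtrees, so an excursion reaching distance $k$ above its starting record escapes with probability $1-(1-\delta_1)^{ck}$. Your scheme discards this step entirely, and that is not a simplification but a hole.

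\textbf{The hitting-time bound is the crux and is not supplied.} You correctly identify $\sP^{h,+,\infty}_a(\sigma_M>k)$ as the remaining obstacle, but your sketch (``good'' vs.\ ``bad'' height intervals, polynomial time cost, stretched-exponentially small density) is not an argument and does not resemble what is actually needed. The paper's Step~3 decomposes $\Chplus$ into an infinite \emph{skeleton} (vertices with infinite offspring) with finite \emph{bushes} hanging off it. The trace of the walk on the skeleton stochastically dominates a reflected simple random walk on $\mathbb{N}_0$, which reaches height $k^{1/6}$ within $k^{1/2}$ steps with probability $1-e^{-ck^{1/6}}$; and by Proposition~\ref{prop:expomomentsCh}, all bushes in the first $k^{1/6}$ generations have size $O(k^{1/6})$ with probability $1-e^{-k^{1/6}}$, so the cover-time bound of Kahn--Linial--Samorodnitsky gives that the walk cannot spend $k^{1/2}/2$ consecutive steps off the skeleton. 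These two facts combined give $\sP(\max_{n\leq k}|X_n|<k^{1/6})\leq e^{-ck^{1/6}}$, which is exactly $\sP(\sigma_{k^{1/6}}>k)$. Without this skeleton/bush argument (or a genuine substitute for it) your proposal does not establish the proposition.

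A minor additional remark: the bound $\sP(\mathcal{R}_m\mid\mathcal{F}_{\sigma_m})\geq\varepsilon\delta_0$ needs a further degree-dependent factor to account for the first step of the walk from $Y_m$ into the subtree, but this only affects the constant and is not a real issue. The two gaps above are.
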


\noindent
The proof of Proposition~\ref{lem:expomomentsdiscovery} relies on a crucial structural fact: for almost every infinite realization of $\Chplus$, on every finite path starting grom the root, a positive proportion of the vertices offer at least two uniformly transient subtrees for the random walk (Lemma~\ref{lem:grimmettkesten}). Thus, if $\vert X_i\vert=k$, the probability that $(X_j)_{j\geq i}$ returns to $\circ$ decays exponentially with $k$. This fact also holds on infinite supercritical Galton-Watson trees (Lemma 2.1 in~\cite{GrimmettKesten}), and can be used to prove a similar renewal property on these trees~\cite{Piau}.
\\
Then, Proposition~\ref{lem:expomomentsdiscovery} implies that $\sup_{a\geq h,\,i\geq 1}\sE^{h,+,\infty}_a(\tau_i)<\infty$. Using that $\vert X_{\tau_{i+1}}\vert-\vert X_{\tau_i}\vert \geq 1$ a.s., one could easily deduce ballisticity estimates - for instance $\sE^{h,+,\infty}[\vert X_k\vert /k]\geq ck$ for some constant $c>0$ and all $k$ large enough. 
\\
However, this does not automatically entail a law of large numbers, as contrary to the Galton-Watson case, the pieces of trajectory $(X_k)_{\tau_i< k\leq \tau_{i+1}}$ for $i\geq 1$ are not i.i.d.: more precisely, $(X_k)_{k> \tau_i}$ is independent of $(X_k)_{k\leq \tau_i}$ conditionally on $\phid(X_{\tau_i})$ and when rerooted at $X_{\tau_i+1}$, it has the law of $(X_k)_{k\geq 0}$ under 
\begin{equation}
\sPr_a:=\sP^{h,+,\infty}_{\phid(X_{\tau_i})}(\cdot\,\vert \, \forall k\geq 0, X_k\neq \oc ),
\end{equation}
see Remark~\ref{rem:renewalabsolutcont} and Proposition~\ref{prop:expomomentsrenewal}. The main issue is the regularity of $\sPr_{a}$ w.r.t.~$a$, in particular of the quantities $\sEr_{a}[\tau_1]$ and $\sEr_{a}[\vert X_{\tau_1}\vert ]$.
\\
\\
%
%
%
%
\textbf{III. Regularity of the renewal intervals.} 
Since renewal intervals are independent conditionally on the value of $\phid$ at the entrance of these intervals, we can decompose the trajectory of $(X_k)$ into a Markov chain that keeps track of the height and duration of renewal intervals, as well as the value of $\phid$ at the exit of the interval. A key point is that renewal intervals have light tails (stretched exponential), so that regularity properties of $\phid$ on $\Td$ (for instance, in spite of the long-range correlations, $\phid(x)$ has a uniform Gaussian tail for ever $x\in \Td$) can be translated to the sequence $(\phid(X_{\tau_i})_{i\geq 1}$, which governs the distribution of the renewal intervals.
\\
\\
In detail, for $i\geq 1$, let $W^{(\tau)}_i:=(X_{\tau_{i}}, \ldots, X_{\tau_{i+1}})$ be the trajectory of $(X_k)_{k\geq 0}$ on the $(i+1)$-th renewal interval, and let $T^{(\tau)}_{i}$ be the subtree from $X_{\tau_{i}}$ in $\Chplus$ of height $ \vert X_{\tau_{i+1}}\vert - \vert X_{\tau_{i}}\vert  $, on which the trajectory $W^{(\tau)}_i$ lives. Then, the sequence $(Y_i)_{i\geq 0}$  defined by
\begin{equation}\label{eqn:defYiMC}
Y_i:=(\phid(X_{\tau_{i+1}}),T^{(\tau)}_i,W^{(\tau)}_i  )
\end{equation}
is a Markov chain on the state space $\cX:=[h,+\infty)\times \cM$, where $\cM$ is, roughly, the set of couples $(T,W)$ where $T$ is a finite tree and $W$ a trajectory starting at the root of $T$ and ending at a vertex of maximal height (see~\eqref{eqn:defMset} for an exact definition). 
\\
We prove that this Markov chain is positive Harris recurrent - in particular, it has a (non-explicit) invariant measure, and that it is uniformly ergodic w.r.t.~the auxiliary drift function
\begin{equation}\label{eqn:Vpotentialdef}
V(Y):=\varphi(Y) + h(Y)^2+\tau(Y)^2
\end{equation} 
for $Y\in \cX$. 
In simple terms, we show that the sequence $(V(Y_i))_{i\geq 0}$ visits regularly a compact set $\cC$ of $[0,+\infty]$, and that from any $Y\in V^{-1}(\cC)$, the chain has probability at least $\alpha >0$ to 'forget its past' (respectively~\eqref{eqn:driftcondition} and~\eqref{eqn:doeblincondition} in Lemma~\ref{lem:Vpotential}). 
\\
Let us explain the reason why $(V(Y_i))_{i\geq 0}$ cannot stay for too long on high values. The duration (and thus the height) of the $(i+1)$-th renewal interval has stretched exponential bounds, uniformly in $i$ and $\phid(X_{\tau_{i+1}})$ (Proposition~\ref{prop:expomomentsrenewal}). Hence we have a very good control on the distribution of $h(Y_{i+1})^2+\tau(Y_{i+1})^2$. Moreover, the sequence $(\varphi(Y_i))_{i\geq 0}$ is attracted to low values, as per the following reasoning. If $\varphi(Y_i)=\phid(X_{\tau_i})$ is very high, then because of the Gaussian tails of $\phid$, $\varphi(Y_i)$ is likely to be the strict maximum of $\phid$, by a sizeable margin, on a large neighbourhood of $X_{\tau_i}$. Since renewal intervals are short, as mentioned just above, $X_{\tau_{i+1}}$ has a high chance to be in this neighbourhood, so that with large probability, we will have $\varphi(Y_{i+1})< c \varphi(Y_i)$ for some constant $c\in (0,1)$. 
\\
By an ergodic theorem from~\cite{MeynTweedie}, these regularity properties on $(Y_i)_{i\geq 0}$ (and thus on the sequences $(\tau_{i+1}-\tau_i)_{i\geq 0}$ and $(\vert X_{\tau_{i+1}}\vert -\vert X_{\tau_i}\vert )_{i\geq 0}$) are enough to ensure that $(\tau_k)$ and $(\vert X_{\tau_k}\vert ) $ satisfy a LLN and a CLT: 
\begin{proposition}\label{prop:CLThtau}
There exist constants $s_{h,\tau},s_{h,X}>0$ and $\sigma_{h,\tau},\sigma_{h,X}\geq 0$ so that for any $a\geq h$, under $\sPr_a$, 
\begin{equation}\label{eqn:CLThandtau}
\frac{\tau_k}{k}\overset{a.s.}{\longrightarrow} s_{h,\tau}\,\,\,  ;\,\,\,\frac{\tau_k - s_{h,\tau}k}{\sqrt{k}}\overset{(d)}{\longrightarrow} \cN(0,\sigma_{h,\tau}^2 ) \text{ and }
\end{equation}
\begin{equation}\label{eqn:CLThandtauX}
\frac{\vert X_{\tau_k}\vert }{k}\overset{a.s.}{\longrightarrow} s_{h,X}\,\,\,  ;\,\,\,\frac{\vert X_{\tau_k}\vert  - s_{h,X}k}{\sqrt{k}}\overset{(d)}{\longrightarrow} \cN(0,\sigma_{h,X}^2 ).
\end{equation}
as $k\rightarrow +\infty$, where convergence in distribution to $\cN(0,0)$ means convergence in probability to $0$.
\\
More generally, for any map $f:\cX\mapsto \dR$ such that $f^2(Y)\leq V(Y)$ for all $Y\in \cX$ (with $V$ defined in~\eqref{eqn:Vpotentialdef} above), the series $\sum_{i=0}^kf(Y_i)$ satisfies a LLN and a CLT (with adhoc constants $s_{h,f},\sigma_{h,f}\geq 0$) as $k\rightarrow +\infty$. 
\end{proposition}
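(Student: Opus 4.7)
The plan is to take as granted the two regularity properties of the Markov chain $(Y_i)_{i\geq 0}$ asserted in Lemma~\ref{lem:Vpotential}---namely the Foster-Lyapunov drift condition~\eqref{eqn:driftcondition} for the potential $V$ of~\eqref{eqn:Vpotentialdef} and the Doeblin-type minorization~\eqref{eqn:doeblincondition} on the sublevel set of $V$---and then appeal to the general machinery of Meyn and Tweedie~\cite{MeynTweedie} for $V$-uniformly ergodic chains. Together these two conditions imply (Theorem 16.0.1 in~\cite{MeynTweedie}) that $(Y_i)$ is positive Harris recurrent with a unique invariant probability measure $\pi$ on $\cX$ satisfying $\pi(V)<\infty$, and moreover that the chain is $V$-uniformly ergodic. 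Since $\varphi$, $\tau^2$ and $h^2$ are each dominated by $V$ by construction, we obtain in particular $\pi(\tau^2)+\pi(h^2)<\infty$. Finally, under $\sPr_a$ we have $\sEr_a[V(Y_0)]<\infty$ uniformly in $a\geq h$, as a combination of the stretched exponential bound of Proposition~\ref{lem:expomomentsdiscovery} (which controls the $\tau^2+h^2$ part) and the Gaussian tail of $\phid(X_{\tau_1})$ (which controls $\varphi$).

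The law of large numbers in~\eqref{eqn:CLThandtau}--\eqref{eqn:CLThandtauX} then follows from the standard ergodic theorem for positive Harris recurrent chains (Theorem 17.0.1(i) in~\cite{MeynTweedie}): for any $f\in L^1(\pi)$,
$$
\frac{1}{k}\sum_{i=0}^{k-1} f(Y_i) \xrightarrow[k\to\infty]{\text{a.s.}} \pi(f),
$$
irrespective of the initial distribution of $Y_0$ (provided $\sEr_a[V(Y_0)]<\infty$, which we have). Using the telescoping identities $\tau_k=\tau_0+\sum_{i=0}^{k-1}\tau(Y_i)$ and $|X_{\tau_k}|=|X_{\tau_0}|+\sum_{i=0}^{k-1}h(Y_i)$ and applying this to the maps $f=\tau$ and $f=h$ yields the almost sure convergence with $s_{h,\tau}:=\pi(\tau)$ and $s_{h,X}:=\pi(h)$. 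Both constants are at least $1$ because $\tau(Y)\geq 1$ and $h(Y)\geq 1$ pointwise (a renewal interval has strictly positive duration and height), which gives the required positivity. The general LLN for an arbitrary $f$ with $f^2\leq V$ follows from the same theorem, noting that Cauchy-Schwarz and $\pi(V)<\infty$ ensure $f\in L^1(\pi)$.

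For the central limit theorem, I invoke the Markov chain CLT for $V$-uniformly ergodic chains (Theorem 17.0.1(iv) in~\cite{MeynTweedie}): for any measurable $f:\cX\to\dR$ satisfying $f^2\leq V$, there exists $\sigma_{h,f}^2\in[0,\infty)$ such that
$$
\frac{1}{\sqrt{k}}\sum_{i=0}^{k-1}\bigl(f(Y_i)-\pi(f)\bigr) \xrightarrow[k\to\infty]{(d)} \cN\bigl(0,\sigma_{h,f}^2\bigr),
$$
uniformly in the initial distribution. The moment bounds $\tau^2\leq V$ and $h^2\leq V$ built into~\eqref{eqn:Vpotentialdef} make this directly applicable to $f=\tau$ and $f=h$, producing $\sigma_{h,\tau}^2,\sigma_{h,X}^2\geq 0$ and the stated CLTs, with the convention that a $\cN(0,0)$ limit is interpreted as convergence in probability to $0$. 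The statement for an arbitrary admissible $f$ is this same theorem applied verbatim. The main technical obstacle is therefore not in this proposition but in establishing the drift and minorization inputs of Lemma~\ref{lem:Vpotential}; the hardest ingredient there is the contractivity of the $\varphi$ component of the drift, which relies on the Gaussian tail of $\phid$ to ensure that, when $\phid(X_{\tau_i})$ is large, the GFF value at the exit of the next renewal interval is overwhelmingly likely to drop by a definite factor.
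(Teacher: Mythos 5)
Your overall route is the one the paper takes: feed the drift condition~\eqref{eqn:driftcondition} and the Doeblin minorization~\eqref{eqn:doeblincondition} of Lemma~\ref{lem:Vpotential} into the Meyn--Tweedie machinery, obtain $V$-uniform ergodicity and positive Harris recurrence of $(Y_i)$, and then read off the LLN and CLT from Theorem 17.0.1. The telescoping identities, the positivity argument ($\tau(Y)\geq 1$ and $\h(Y)\geq 1$ pointwise, so $s_{h,\tau},s_{h,X}\geq 1$), and the domination $f^2\leq V$ giving both $f\in L^1(\pi)$ and the CLT hypothesis are all fine and match the paper's intent.

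However, there is a genuine gap in the middle of the argument. You assert that drift plus minorization ``imply (Theorem 16.0.1 in~\cite{MeynTweedie})'' positive Harris recurrence and $V$-uniform ergodicity. Theorem 16.0.1 is an \emph{equivalence} theorem whose standing hypotheses include $\psi$-irreducibility and aperiodicity of the chain; it does not manufacture these from the drift and minorization. On a general state space such as $\cX = [h,+\infty)\times\cM$, a drift condition to a small set does not by itself rule out disconnected behaviour, so irreducibility must be established before the equivalence of Theorem 16.0.1 can be invoked. The paper does exactly this extra work: it uses Lemma~\ref{lem:reachability} to construct an explicit irreducibility measure $\phi$ on $\cX$ (supported on product sets $I\times\{\m_j\}$ with $\mathrm{Leb}(I)>0$), upgrades it to a maximal irreducibility measure $\psi$ via Proposition~4.2.2 of~\cite{MeynTweedie}, and then checks Harris recurrence of every $\psi$-positive set by the three-step argument in the proof. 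None of this appears in your write-up, and Lemma~\ref{lem:reachability} is not cited at all.

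The gap is not fatal, but it needs to be filled. In your framework the fastest repair is: (a) observe that, by~\eqref{eqn:driftcondition}, the process $(3/2)^{n}V(Y_n)\mathbf{1}\{\text{$Y_0,\dots,Y_n\notin C$}\}$ is a nonnegative supermartingale for $C:=[h,h+\Cxi]\times\cM$, so the chain enters $C$ almost surely in finite time from any initial state; (b) from $C$, by~\eqref{eqn:doeblincondition} the next step lands in any $B$ with probability at least $\alpha\nu_\star(B)$, so the chain is $\nu_\star$-irreducible; (c) $\nu_\star(C)=1>0$, whence strong aperiodicity. Only after these three observations is Theorem 16.0.1 legitimately applicable; without them, ``drift plus Doeblin $\Rightarrow$ $V$-uniform ergodicity'' is a non sequitur on a continuous state space.
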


\noindent
From there, we show a pointwise LLN and CLT for the SRW (Proposition~\ref{prop:LLNandCLT}), and  we conclude the proof of Theorem~\ref{thm:LLN} in Section~\ref{subsec:proofthmmain} via standard arguments.

\subsection{Open questions and related works}\label{subsec:openquestions}
\noindent
\textbf{GFF on Galton-Watson trees.} We believe that some of our arguments can be generalized from $\Td$ to supercritical Galton-Watson trees, up to a technical cost. In spite of the additional inhomogeneities, such trees have a.s.~a uniform exponential growth as described below Proposition~\ref{prop:expomomentsrenewal} (hence the Green function still decays exponentially fast with the distance between pair of vertices), and the fact that vertices have i.i.d.~offspring (hence disjoint parts of the tree are independent) brings some regularity. 
\\
In a recent paper~\cite{DrewitzGalloPrevost}, it was shown that for every offspring distribution with finite mean $m>1$, the critical threshold for GFF percolation is positive (solving a question from~\cite{AS2018}). A by-product of the proof, which relies on a clever construction using the links between the GFF and random interlacements via a Ray-Knight theorem, is that the SRW on $\Ch$ is transient in a non-trivial part of the supercritical regime (when $h$ is negative or close enough to 0). 
\\
\\
\textbf{Monotonicity of the speed and bias. }A natural question about Theorem~\ref{thm:LLN} is whether the map $h\mapsto s_h$ is monotonic. There does not seem to be an obvious answer. One shows easily that $\sup_{h<h_\star}s_h=(d-2)/d$, and that $\lim_{h\rightarrow -\infty}s_h=(d-2)/d$, which is the speed of the SRW on $\Td$ (in short, a classical martingale argument shows that $s_h$ cannot be larger than the speed of the SRW on a $\Td$ since no vertex has degree larger than $d$ in $\Ch$, and as $h\rightarrow -\infty$, the subtree of $\Ch$ seen by the SRW during the first renewal intervals is $d$-regular with high probability). One can conjecture that this convergence as $h\rightarrow-\infty$ is monotonic. On the other hand, one can conjecture that for $\varepsilon>0$ small enough, $h\mapsto s_h$ is decreasing on $[h_\star-\varepsilon,h_\star)$ and converges to $0$. 
\\
This is somewhat reminiscent of the variations of the speed of biased random walks on Galton-Watson trees w.r.t.~the bias away from the root, a topic that has been subject to much attention~\cite{Aidekon,BenArousFGH,Bowditch,CollevechioHolmesKious,CroydonFriberghKumagai,LPP96}. It is known that the speed is an increasing function of the bias when the latter is close to the critical value that makes the random walk recurrent, and that if the tree has leaves, the speed decreases to $0$ when the bias goes to infinity, since the random walk loses a considerable amount of time in traps. One could also investigate the possible variation profiles that one can obtain for the speed of biased random walks on $\Ch$, when $h$ spans $(-\infty, h_\star)$. 
\\
\\
\textbf{Critical GFF trees. } Very recently, it was shown that $\cC_\circ^{h_\star}$ is a.s.~finite~\cite{CernyLocher}. It would be interesting to give a proper definition for $\cC_\circ^{h_\star}$ conditioned to be infinite, and to investigate the behaviour of the SRW on it. The SRW should be recurrent, and if it is, does it exhibit the same fluctuations as the SRW on a critical Galton-Watson tree (with an offspring distribution having a finite second moment) conditioned to be infinite~\cite{Kesten}?

\subsection{Plan of the paper}\label{subsec:plan}

In Section~\ref{sec:basic}, we introduce the recursive construction of $\phid$ on $\Td$, the intergenerational operator $L_h$ and other related objects. We also state several technical results on the exponential growth of $\cZ_k^h$. 
In Section~\ref{sec:transience}, we establish the transience, proving Proposition~\ref{prop:transienceintro}. In Section~\ref{sec:renewal}, we show the existence of renewal interval with stretched exponential moments, proving Proposition~\ref{lem:expomomentsdiscovery}. In Section~\ref{sec:speed}, we establish regularity of the renewal intervals (Proposition~\ref{prop:CLThtau}) and prove Theorem~\ref{thm:LLN}. 

\subsection{Further definitions and conventions}
Trees in this paper are locally finite and undirected. 
For any tree $T$, denote $d_T$ the standard graph distance on its vertex set. For every vertex $x$ and integer $R\geq 0$, we define $B_T(x,R):=\{y, \, d_T(x,y)\leq R\}$ and $\partial B_T(x,R+1):=B_T(x,R+1)\setminus B_T(x,R)$. 
\\
If the tree is rooted at a distinguished vertex $\rho$, the \textbf{height} $\mathfrak{h}_T(x)$ of a vertex $x$  is $d_T(\circ,x)$. For simplicity, we write $\vert x\vert$ when $x$ is is in $\Td$ or a subtree rooted at $\circ$. The \textbf{ray of $x$}, denoted $\xi_x$, is the unique injective path from $\circ$ to $x$ (a \textbf{path} being a sequence of vertices such that any two consecutive vertices are neighbours). 
\\
The \textbf{offspring} of $x$ is the set $\cO_x$ of vertices $y$ such that $x\in \xi_y$. The tree induced by these vertices is the \textbf{subtree from $x$}. For $r\geq 0$, the \textbf{$r$-offspring} $\cO_x(r)$ of $x$ is its offspring at distance $r$ of $x$, and its \textbf{offspring up to generation $r$} is its offspring at distance at most $r$. If $y$ is in the $1$-offspring of $x$, then $y$ is a \textbf{child} of $x$, and $x$ is its \textbf{parent}. In this case, write $x=\overline{y}$.
\\
\\
Numbered constants $C_1 , C_2, \ldots$ only depend on $d$ and $h$, whereas other constants such as $c,c',\ldots$ may depend from other parameters, and change from one line to the next in the same computation.

\section{A branching process with an exponential growth}\label{sec:basic}

\subsection{An intergenerational operator}
There is an alternate definition of $\phid$, starting from its value at $\circ$ and expanding recursively to its neighbours. It shows that $\Ch$ is an infinite-type branching process, the type of a vertex $x$ being $\phid(x)$.

\begin{proposition}[\textbf{Recursive construction of the GFF},\cite{ACregultrees}]\label{prop:recursivegfftrees}
Define a Gaussian field $\varphi$ on $\Td$ as follows: let $(\zeta_y)_{y\in\Td}$ be a family of i.i.d. $\cN(0,1)$ random variables. Let $\varphi(\circ):=\sqrt{\frac{d-1}{d-2}}\zeta_{\circ}$. For every $y\in \Td\setminus \{\circ\}$, define recursively $\varphi(y):=\sqrt{\frac{d}{d-1}}\zeta_y+\frac{1}{d-1}\varphi(\overline{y}),$ where $\overline{y}$ is the \textbf{parent} of $y$, i.e. its unique neighbour on the shortest path from $\circ$ to $y$.
Then 
\[\varphi \overset{d.}{=}\phid.
\]
\end{proposition}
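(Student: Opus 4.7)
Since both $\varphi$ and $\phid$ are centered Gaussian fields on $\Td$, it suffices to show that they share the same covariance function. The plan is therefore to compute $\gtd$ explicitly and then verify, by unrolling the recursion, that $\mathrm{Cov}(\varphi(x),\varphi(y)) = \gtd(x,y)$ for every $x,y\in\Td$.

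First I would compute the Green function $\gtd$ by standard hitting-probability arguments on $\Td$. Letting $p$ denote the probability that a SRW started at a vertex $y$ ever reaches its parent $\overline y$, conditioning on the first step yields $p = 1/d + ((d-1)/d)\, p^2$, whose relevant (transient) solution is $p = 1/(d-1)$. The return probability to $\circ$ is then $d\cdot (1/d)\cdot (1/(d-1)) = 1/(d-1)$, so $\gtd(\circ,\circ) = (1-1/(d-1))^{-1} = (d-1)/(d-2)$. By vertex-transitivity of $\Td$, $\gtd(y,y) = (d-1)/(d-2)$ for all $y$, and combining this with $\gtd(x,y) = p^{d_\Td(x,y)}\,\gtd(y,y)$ gives
\[
\gtd(x,y) \;=\; \frac{d-1}{d-2}\,(d-1)^{-d_\Td(x,y)}.
\]

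Next I would prove by induction on $|x|$ that $\Var(\varphi(x)) = (d-1)/(d-2)$. The base case is built into the definition $\varphi(\circ)=\sqrt{(d-1)/(d-2)}\,\zeta_\circ$. For the inductive step, if $\Var(\varphi(\overline y)) = (d-1)/(d-2)$, then the independence of $\zeta_y$ from $\varphi(\overline y)$ gives
\[
\Var(\varphi(y)) \;=\; \frac{d}{d-1} + \frac{1}{(d-1)^2}\cdot\frac{d-1}{d-2} \;=\; \frac{d(d-2)+1}{(d-1)(d-2)} \;=\; \frac{d-1}{d-2}.
\]

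For the off-diagonal covariances, I would iterate the defining recursion. For any ancestor $z$ of $y$ at distance $\ell$, iterating $\varphi(u) = \sqrt{d/(d-1)}\,\zeta_u + (d-1)^{-1}\varphi(\overline u)$ along the path from $z$ to $y$ produces a decomposition
\[
\varphi(y) \;=\; (d-1)^{-\ell}\,\varphi(z) + N_y,
\]
where $N_y$ is a linear combination of the $\zeta_u$'s indexed by the vertices on the path from $z$ to $y$ strictly below $z$, hence independent of $\varphi(z)$. For arbitrary $x,y\in\Td$, let $z$ be their most recent common ancestor, $k = d_\Td(z,x)$, $\ell = d_\Td(z,y)$, so that $k+\ell = d_\Td(x,y)$. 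Applying the decomposition on both sides, the variables $N_x$, $N_y$ involve disjoint subsets of $(\zeta_u)_{u\in\Td}$ and neither of them involves any $\zeta$ used in $\varphi(z)$; hence $\varphi(z), N_x, N_y$ are jointly independent, yielding
\[
\Cov(\varphi(x),\varphi(y)) \;=\; (d-1)^{-(k+\ell)}\Var(\varphi(z)) \;=\; \frac{d-1}{d-2}(d-1)^{-d_\Td(x,y)} \;=\; \gtd(x,y).
\]
There is no real obstacle in this argument; the only point requiring mild care is setting up the decomposition through the common ancestor so that the independence between $N_x$, $N_y$ and $\varphi(z)$ is manifest.
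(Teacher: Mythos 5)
Your proof is correct and complete. Since both fields are centered Gaussian, matching covariances does establish equality in distribution; your Green function computation via the hitting probability $p=1/(d-1)$, the inductive verification that $\Var(\varphi(y))=\frac{d-1}{d-2}$ for all $y$, and the unrolled-recursion decomposition $\varphi(y)=(d-1)^{-\ell}\varphi(z)+N_y$ through the most recent common ancestor $z$ are all sound, and the independence of $\varphi(z),N_x,N_y$ follows cleanly from the disjointness of the underlying families of $\zeta_u$'s.

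The paper itself does not prove this proposition: it cites \cite{ACregultrees} and remarks that the statement is a corollary of the more general domain Markov property for $\phid$ (stated here as Proposition~\ref{prop:domainMarkov}), namely that for $U\subsetneq\Td$ the field $\phid^U(x)=\phid(x)-\dE[\phid(X_{T_U})]$ is a centered Gaussian field with covariance $\gtd^U$ independent of $\phid$ outside $U$; taking $U$ to be the subtree hanging off a vertex recovers the one-step recursion. Your route is more elementary and entirely self-contained, at the cost of being specific to the one-step recursion on $\Td$, whereas the domain-Markov route gives the recursion as a special case of a structural property that the paper in fact reuses elsewhere (e.g.\ in the proofs of Lemma~\ref{lem:grimmettkesten} and Proposition~\ref{lem:expomomentsdiscovery}). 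Either argument is acceptable for establishing the proposition as stated.
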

\noindent
Let $\eta(h):=\dP^{\Td}(\Ch\text{ is infinite})$. 
\\
Proposition~\ref{prop:recursivegfftrees} is the corollary of a more general domain Markov property (see \cite{ACregultrees}, (1.7)-(1.9) for proof details). Namely, for $U\subsetneq \Td$, define the Green function $\gtd^U$ of the random walked killed when exiting $U$ by 
\[
\gtd^U(x,y)= \dE_x\left[\sum_{k=0}^{T_U}\mathbf{1}_{X_k}=y\right],
\]
where $T_U:=\inf\{k\geq 0, X_k \not\in U\}$. Define the field $\phid^U$ on $\Td$ by $\phid^U(x)=\phid(x)-\dE\left[\phid(X_{T_U})\right]$ for all $x\in \Td$.

\begin{proposition}[\textbf{Domain Markov property}]\label{prop:domainMarkov}
$\phid^U$ is a Gaussian process with covariances given by $\Cov(\phid^U(x), \phid^U(y))=\gtd^U(x,y),$ and it is independent of $(\phid(x))_{x\not\in U} $.
\end{proposition}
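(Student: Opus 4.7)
My plan is to identify $\phid^U$ as the $L^2$-orthogonal residual when each $\phid(x)$ is projected onto the Gaussian Hilbert space spanned by $(\phid(z))_{z\in U^c}$. Once that identification is in place, joint Gaussianity upgrades orthogonality to independence, and the covariance formula reduces to the standard Green-function decomposition.

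First I would note that for $x\in U^c$ one has $T_U=0$ and $\phid^U(x)=0$, so only $x,y\in U$ need work. For such $x$, set
\[
H(x):=\dE_x\bigl[\phid(X_{T_U})\,\1_{\{T_U<\infty\}}\bigr]=\sum_{z\in U^c}\dP_x(X_{T_U}=z)\,\phid(z),
\]
which is a $\sigma(\phid|_{U^c})$-measurable centred Gaussian (the series converges in $L^2$ because $\sum_{z,w}\dP_x(X_{T_U}=z)\dP_x(X_{T_U}=w)\gtd(z,w)\leq \gtd(\circ,\circ)<\infty$, using transience of the SRW on $\Td$ for $d\geq 3$). Then $\phid^U(x)=\phid(x)-H(x)$ is Gaussian too, and bilinearity gives, for any $z$,
\[
\Cov(\phid^U(x),\phid(z))=\gtd(x,z)-\dE_x\bigl[\gtd(X_{T_U},z)\,\1_{\{T_U<\infty\}}\bigr].
\]
Splitting the occupation sum defining $\gtd(x,z)$ at $T_U$ and applying the strong Markov property yields the classical decomposition $\gtd(x,z)=\gtd^U(x,z)+\dE_x[\gtd(X_{T_U},z)\,\1_{\{T_U<\infty\}}]$, and therefore $\Cov(\phid^U(x),\phid(z))=\gtd^U(x,z)$. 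For $z\in U^c$, the killed walk never reaches $z$, so $\gtd^U(x,z)=0$; this orthogonality upgrades by joint Gaussianity to the desired independence of $\phid^U$ from $\sigma(\phid|_{U^c})$. For $z=y\in U$, the same identity reads $\Cov(\phid^U(x),\phid(y))=\gtd^U(x,y)$, and subtracting $\Cov(\phid^U(x),H(y))=0$ (from the independence just obtained, since $H(y)\in\sigma(\phid|_{U^c})$) gives $\Cov(\phid^U(x),\phid^U(y))=\gtd^U(x,y)$, as claimed.

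The main subtlety I anticipate is the handling of the event $\{T_U=\infty\}$: on $\Td$ with $d\geq 3$ the SRW is transient, so $U$ need not be exited almost surely, and both the harmonic extension $H$ and the last-passage decomposition must be set up with the indicator $\1_{\{T_U<\infty\}}$. Transience takes care of both issues: it bounds $\gtd$ uniformly, making $H$ a well-defined $L^2$ variable, and on $\{T_U=\infty\}$ the path stays in $U$ forever, so it visits any $z\in U^c$ zero times and the Green decomposition holds without hidden boundary contribution. Everything else is routine Gaussian bookkeeping.
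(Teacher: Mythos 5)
Your proof is correct, and it is the standard argument. The paper itself does not prove this proposition --- it refers the reader to \cite{ACregultrees}, equations (1.7)--(1.9) --- and what you supply is exactly the argument such a reference would carry out: view $H(x)=\dE_x[\phid(X_{T_U})\1_{\{T_U<\infty\}}]$ as the orthogonal projection of $\phid(x)$ onto the Gaussian Hilbert space spanned by $\phid|_{U^c}$, use joint Gaussianity to upgrade vanishing covariance to independence, and obtain the covariance identity from a first-exit decomposition of the occupation sum together with the strong Markov property. Your explicit handling of $\{T_U=\infty\}$ and the $L^2$-convergence of $H(x)$ is careful and is needed here, since the SRW on $\Td$ ($d\geq 3$) is transient and $U$ need not be exited almost surely.

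One point of bookkeeping worth making explicit: the paper's display writes $\gtd^U(x,y)=\dE_x\bigl[\sum_{k=0}^{T_U}\1_{\{X_k=y\}}\bigr]$, i.e.\ \emph{including} the exit step, whereas your last-exit decomposition $\gtd(x,z)=\gtd^U(x,z)+\dE_x[\gtd(X_{T_U},z)\1_{\{T_U<\infty\}}]$ and your claim that $\gtd^U(x,z)=0$ for $z\notin U$ both implicitly use the usual convention $\sum_{k<T_U}$. For $x,y\in U$ the two agree, since $X_{T_U}\notin U$ kills the endpoint term, so the core computation is unaffected; but with the paper's literal definition one would have $\gtd^U(x,x)=1$ while $\phid^U(x)=0$ for $x\notin U$, so the stated covariance identity fails on the diagonal of $U^c\times U^c$. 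This is best read as a typo in the paper's definition of the killed Green function rather than a gap in your argument, but you should state the convention you are using.
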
 
\noindent
For $k\geq 1$, let $\cZ_k^{h,+}:=\Chplus \cap \partial B_{\Td^+}(\circ,k)$. Define $\cZ_k^h:=\Ch\cap \partial B_{\Td}(\circ,k)$.
\\
Let $\nu:= \cN(0,\frac{d-1}{d-2})$, $\nu_1 := \cN(0,\frac{d}{d-1})$, and $L^2(\nu):=L^2(\dR, \mathcal{B}(\dR),\nu)$. For $h\in \dR$, define the operator $L_h$ on $L^2(\nu)$ by
\begin{equation}\label{eqn:Lhdef}
(L_hf)(a):=(d-1)\mathbf{1}_{[h,+\infty)}(a) \dE_Y\left[f\left(\frac{a}{d-1}+Y\right)\mathbf{1}_{[h,+\infty)}\left(\frac{a}{d-1}+Y\right)\right]
\end{equation}
for all $f\in L^2(\nu)$ and $a\in \dR$, where $Y\sim \nu_1$ and $\dE_Y$ is the expectation w.r.t. $Y$. 

\noindent
By Proposition~\ref{prop:recursivegfftrees}, one has $(L_hf)(a)=\dE_a^{\Td}[\sum_{x\in \cZ_1^{h,+}}f(\phid(x))]$, where $\dE_a^{\Td}$ is the expectation conditionally on $\phid(\circ)=a$. By a straightforward induction, for all $k\geq 1$, the $k$-th iterate of $L_h$ is given by
\begin{equation}\label{eqn:operatorrecursiveequation}
(L_h^kf)(a):=\dE_a^{\Td}\left[\sum_{x\in \cZ_k^{h,+}}f(\phid(x))\right].
\end{equation}
Informally, $L_h$ encodes how the information travels from one generation to the next in~$\Chplus$. 

\begin{proposition}[Propositions 3.1 and 3.3 of \cite{SZ2016}, Proposition 2.1 of~\cite{ACregultrees}]\label{prop:SZ16}
$L_h$ is a self-adjoint and non-negative operator, its norm $\lambda_h$ corresponds to a simple eigenvalue. $h\mapsto\lambda_h$ is a decreasing homeomorphism from $\dR$ to $(0, d-1)$, and $h_{\star}$ is the unique value such that $\lambda_{h_{\star}}=1$.
\\
Let $\chi_h$ be the corresponding eigenfunction such that $\Vert \chi_h\Vert_{L^2(\nu)}=1$: it vanishes on $(-\infty, h)$ and it is continuous and positive on $[h, +\infty)$. 
\end{proposition}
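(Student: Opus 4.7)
The plan is to study $L_h$ as a compact self-adjoint positivity-preserving operator on $L^2(\nu)$ and to apply a Perron--Frobenius / Jentzsch argument. First I would realize $L_h$ as an integral operator: changing variables $b=a/(d-1)+y$ in~\eqref{eqn:Lhdef} gives
\[
(L_hf)(a)=\int K_h(a,b)\,f(b)\,\nu(db),\quad K_h(a,b)=(d-1)\mathbf{1}_{\{a\geq h,\,b\geq h\}}\frac{\nu_1(b-a/(d-1))}{\nu(b)},
\]
where $\nu_1$ also denotes the density of $\cN(0,d/(d-1))$. A direct Gaussian computation shows that the joint density $\nu(a)\nu_1(b-a/(d-1))$ is symmetric in $(a,b)$, which is just the symmetry of the covariance matrix of $\phid$ at two neighbouring vertices; this yields self-adjointness of $L_h$ on $L^2(\nu)$. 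Non-negativity is immediate from~\eqref{eqn:operatorrecursiveequation}: $L_h^kf\geq 0$ pointwise whenever $f\geq 0$. Next I would check that $L_h$ is Hilbert--Schmidt by bounding $\iint K_h(a,b)^2\,\nu(a)\,\nu(b)\,da\,db$, using that the Gaussian decay of $\nu_1$ dominates the $1/\nu(b)$ weight; hence $L_h$ is compact. Since $K_h$ is strictly positive on $\{a,b\geq h\}\times\{a,b\geq h\}$, Jentzsch's theorem then gives that $\lambda_h:=\|L_h\|$ is a simple eigenvalue with a strictly positive eigenfunction $\chi_h$ on $[h,+\infty)$.

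Next I would study the map $h\mapsto\lambda_h$. Monotonicity follows from the variational formula: for $h'\leq h$ one has $L_{h'}f\geq L_hf$ pointwise when $f\geq 0$, hence $\lambda_{h'}\geq\lambda_h$; strict inequality comes from testing against $\chi_h$, which is strictly positive on the region by which the integration domain increases. Continuity of $h\mapsto\lambda_h$ follows because $h\mapsto K_h$ is continuous in $L^2(\nu\otimes\nu)$ by dominated convergence, hence $h\mapsto L_h$ is continuous in Hilbert--Schmidt norm, and the simple top eigenvalue of a self-adjoint compact operator depends continuously on the operator. For the range: as $h\to+\infty$ the kernel $K_h$ goes to $0$ and so does $\lambda_h$; as $h\to-\infty$, $L_h$ converges in Hilbert--Schmidt norm to the operator $(L_{-\infty}f)(a):=(d-1)\,\dE_Y[f(a/(d-1)+Y)]$, which admits the constant function as a Perron eigenfunction with eigenvalue $d-1$, giving $\lambda_h\to d-1$. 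Combined with strict monotonicity, this makes $h\mapsto\lambda_h$ a decreasing homeomorphism onto $(0,d-1)$.

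Finally I would identify the unique $h$ with $\lambda_h=1$ as the percolation threshold $h_\star$. The relation~\eqref{eqn:operatorrecursiveequation} gives $\dE_a^{\Td}[|\cZ_k^{h,+}|]=(L_h^k\mathbf{1}_{[h,+\infty)})(a)$, which by spectral expansion is of order $\chi_h(a)\lambda_h^k$. When $\lambda_h<1$, the first-moment bound together with Markov's inequality forces $\eta(h)=0$. When $\lambda_h>1$, a Paley--Zygmund argument on $|\cZ_k^{h,+}|$ yields $\eta(h)>0$; this relies on a second-moment estimate on $|\cZ_k^{h,+}|$, for which the GFF's long-range correlations require careful tree computations, since the pair-correlation operator is not simply $L_h\otimes L_h$. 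This last step is the main technical obstacle and is carried out in~\cite{SZ2016,ACregultrees}. The stated properties of $\chi_h$ then follow from the fixed-point equation $\lambda_h\chi_h=L_h\chi_h$: vanishing on $(-\infty,h)$ is immediate from the indicator in~\eqref{eqn:Lhdef}, while continuity and strict positivity on $[h,+\infty)$ follow from dominated convergence in the integrand defining $L_h\chi_h$ together with the conclusion of Jentzsch's theorem.
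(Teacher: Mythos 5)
This proposition is cited in the paper (to \cite{SZ2016} and \cite{ACregultrees}) rather than proved, so there is no in-paper proof to compare against. Your sketch is structurally the standard Perron--Frobenius/Jentzsch route used in those references (integral kernel, Hilbert--Schmidt, Jentzsch for the simple top eigenvalue and positive eigenfunction, monotonicity and continuity of $h\mapsto\lambda_h$, and the second-moment argument to identify $h_\star$), and you correctly flag the second-moment estimate as the genuinely hard step. Two local steps, however, would not go through as written.

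First, you justify ``non-negative'' by observing that $L_h$ is positivity-preserving ($L_h^kf\geq0$ whenever $f\geq0$). That is a different property from being a non-negative (i.e.\ positive semi-definite) self-adjoint operator, which is what is being claimed and which is used to identify $\|L_h\|$ with the top eigenvalue. Positive semi-definiteness requires a separate argument: writing $\langle L_hf,f\rangle_\nu=(d-1)\,\dE\bigl[g(G_1)g(G_2)\bigr]$ with $g=f\mathbf{1}_{[h,\infty)}$ and $(G_1,G_2)$ the exchangeable Gaussian pair given by the covariance $\gtd$ between neighbours, one uses the decomposition $G_i=\sqrt{\rho}\,W+\sqrt{1-\rho}\,Z_i$ with $W,Z_1,Z_2$ independent to get $\dE[g(G_1)g(G_2)]=\dE_W\bigl[\dE[g(G_1)\mid W]^2\bigr]\geq0$. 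Positivity-preserving alone does not yield this.

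Second, your strict-monotonicity argument (``strict inequality comes from testing against $\chi_h$, which is strictly positive on the region by which the integration domain increases'') fails: for $h'<h$ the kernels $K_{h'}$ and $K_h$ differ exactly on $\{\min(a,b)<h\}$, and $\chi_h$ \emph{vanishes} there, so $\langle L_{h'}\chi_h,\chi_h\rangle_\nu=\langle L_h\chi_h,\chi_h\rangle_\nu=\lambda_h$ and the test gives only $\lambda_{h'}\geq\lambda_h$. To get strictness one must argue by contradiction using simplicity and strict positivity of the Perron eigenfunction: if $\lambda_{h'}=\lambda_h$, then $\chi_h$ would be a maximizer for $L_{h'}$, hence equal to $\chi_{h'}$ by simplicity; but $\chi_{h'}$ is strictly positive on $[h',\infty)$ while $\chi_h$ vanishes on $[h',h)$, a contradiction. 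With these two corrections, the rest of the outline (Hilbert--Schmidt bound, continuity in HS norm, endpoints $\lambda_h\to0$ and $\lambda_h\to d-1$, and the properties of $\chi_h$ from the fixed-point equation) is sound.
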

%

\noindent
The construction of Proposition~\ref{prop:recursivegfftrees} gives a monotonicity property for the GFF on $\Td$. A set $S\subset \dR^{\Td}$ is said to be \textbf{increasing} if for any $(\Phi^{(1)}_z)_{z\in \Td}, (\Phi^{(2)}_z)_{z\in \Td}\in S$ such that $\Phi^{(1)}_z \leq \Phi^{(2)}_z$ for all $z\in \Td$, $  (\Phi^{(1)}_z)_{z\in \Td}\in S\Rightarrow (\Phi^{(2)}_z)_{z\in \Td}\in S$. Say that an event of the form $\{\phid \in S\}$ is \textbf{increasing} if $S$ is increasing.

\begin{lemma}[\textbf{Conditional monotonicity}]\label{lem:monotonicityphid}
If $E$ is an increasing event, then the map $a \mapsto \dP_a^{\Td}(E)$ is non-decreasing on $\dR$.
\end{lemma}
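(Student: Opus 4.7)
The plan is to build an explicit monotone coupling using the recursive construction from Proposition~\ref{prop:recursivegfftrees}. Fix $a_1 \leq a_2$ in $\dR$. Let $(\zeta_y)_{y \in \Td \setminus \{\circ\}}$ be a single i.i.d.~family of $\cN(0,1)$ random variables, defined on some common probability space, and define two fields $\varphi^{(1)}, \varphi^{(2)}$ on $\Td$ by
\begin{equation*}
\varphi^{(i)}(\circ) := a_i, \qquad \varphi^{(i)}(y) := \sqrt{\tfrac{d}{d-1}}\,\zeta_y + \tfrac{1}{d-1}\,\varphi^{(i)}(\overline{y}) \quad \text{for } y \in \Td \setminus \{\circ\},
\end{equation*}
using the \emph{same} $\zeta_y$'s in both constructions. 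Because only $\varphi^{(i)}(\circ)$ is prescribed (equal to $a_i$), while the family $(\zeta_y)_{y \neq \circ}$ is independent of $\zeta_\circ$, the argument used to establish Proposition~\ref{prop:recursivegfftrees} shows that $\varphi^{(i)}$ has the law of $\phid$ under $\dP_{a_i}^{\Td}$.

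Next, I would prove by induction on $|y|$ that $\varphi^{(1)}(y) \leq \varphi^{(2)}(y)$ almost surely, for every $y \in \Td$. The base case $y = \circ$ holds by assumption. For the inductive step, if $\varphi^{(1)}(\overline{y}) \leq \varphi^{(2)}(\overline{y})$, then subtracting the two recursions gives
\begin{equation*}
\varphi^{(2)}(y) - \varphi^{(1)}(y) = \tfrac{1}{d-1}\bigl(\varphi^{(2)}(\overline{y}) - \varphi^{(1)}(\overline{y})\bigr) \geq 0,
\end{equation*}
since the $\zeta_y$ terms cancel. Thus pointwise $\varphi^{(1)} \leq \varphi^{(2)}$.

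Now let $E = \{\phid \in S\}$ with $S$ increasing. By the monotone coupling above, the event $\{\varphi^{(1)} \in S\}$ is contained in $\{\varphi^{(2)} \in S\}$, so
\begin{equation*}
\dP_{a_1}^{\Td}(E) = \dP\bigl(\varphi^{(1)} \in S\bigr) \leq \dP\bigl(\varphi^{(2)} \in S\bigr) = \dP_{a_2}^{\Td}(E),
\end{equation*}
which is the desired monotonicity. There is no real obstacle here: the recursive construction of Proposition~\ref{prop:recursivegfftrees} is specifically designed so that raising the value at the root translates into a pointwise increase of the whole field once the noise is coupled. The only minor point to be careful about is to justify that the coupled field $\varphi^{(i)}$ indeed has the conditional law $\dP_{a_i}^{\Td}$, which follows immediately from the independence of $(\zeta_y)_{y \neq \circ}$ from $\zeta_\circ$.
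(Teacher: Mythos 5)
Your proof is correct and takes essentially the same approach as the paper: couple the two conditioned fields via the recursive construction of Proposition~\ref{prop:recursivegfftrees} using a common noise $(\zeta_y)$, note that the difference propagates deterministically as $\frac{1}{d-1}$ times the parent's difference, and conclude by monotonicity of $S$. You have merely spelled out the induction that the paper leaves implicit.
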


\begin{proof}
Let $a_1,a_2\in \dR$ such that $a_1>a_2$. It suffices to give a coupling between a GFF $\phid^{(1)}$ conditionned on $\phid^{(1)}(\circ)=a_1$ and a GFF $\phid^{(2)}$ conditionned on $\phid^{(1)}(\circ)=a_1$ such that a.s., for every $z\in \Td$, $\phid^{(1)}(z)\geq \phid^{(2)}(z)$. To do this, let $(\zeta_y)_{y\in \Td}$ be i.i.d. standard normal variables, and define recursively $\phid^{(1)}$ and $\phid^{(2)}$ as in Proposition~\ref{prop:recursivegfftrees}.
\end{proof}

\noindent
We will need another operator: we define $R_h :L^2(\nu) \rightarrow L^2(\nu)$ by 
\begin{equation}\label{eqn:Rhdef}
R_hf(a):=\mathbf{1}_{(-\infty,h)}(a)+\mathbf{1}_{[h,+\infty)}(a) \dE_Y\left[f\left(\frac{a}{d-1}+Y\right)\right]^{d-1}
\end{equation}
for every $f\in L^2(\nu)$ and $a\in \dR$. We refer the reader to Section 3 of \cite{ACregultrees} for details. Note that $R_hf(a)=\dE^{\Td}_a[\prod_{y\in \cZ_1^{h,+}}f(\phid(y)) ]$, and that by a straightforward induction on $k\geq 1$, 
\begin{equation}\label{eqn:Rhnintermsofchildren}
R_h^kf(a):= \dE^{\Td}_a\left[\prod_{y\in \cZ_k^{h,+}}f(\phid(y)) \right],
\end{equation} 
where $R_h^k$ is $R_h$ iterated $k$ times.

\begin{lemma}[Lemma 3.5 in~\cite{ACregultrees}]\label{lem:fixedpointRh}
 $q_h$ and $\mathbf{1}_{(-\infty, +\infty)}$ are the only fixed points of $R_h$ in $\cS_h:=\{f\in L^2(\nu)\, \vert \, 0\leq f\leq 1\text{ and }f=1\text{ on }(-\infty,h)\}$, where for all $a\in \dR$, 
 \begin{equation}\label{eqn:qhdef}
 q_h(a):=\dP_a^{\Td}(\vert\Chplus \vert =+\infty).
 \end{equation}
\end{lemma}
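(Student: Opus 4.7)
The plan is to set up a bounded martingale from the multiplicative representation~\eqref{eqn:Rhnintermsofchildren} of $R_h^k$, and to show that its almost-sure limit can take only two possible shapes, corresponding to the two claimed fixed points. First I would quickly verify the candidates: $R_h\mathbf{1}(a)=1$ is immediate, while for $q_h$ (adopting the convention $q_h\equiv 1$ on $(-\infty,h)$ so that it lies in $\cS_h$) the recursive construction of the GFF (Propositions~\ref{prop:recursivegfftrees} and~\ref{prop:domainMarkov}) says that, conditionally on $\phid(\circ)=a\geq h$, the subtrees of $\Td^+$ rooted at the $d-1$ children are independent with child values $a/(d-1)+Y_j$, $Y_j\sim\nu_1$, and $\{|\Chplus|<\infty\}$ is the intersection over children of the corresponding finiteness events; this yields the required multiplicative identity on $[h,+\infty)$, the behaviour on $(-\infty,h)$ being built into $R_h$.

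For uniqueness, let $f\in\cS_h$ be any fixed point and fix $a\in\dR$. Iterating $R_hf=f$ and invoking~\eqref{eqn:Rhnintermsofchildren},
\[
f(a) \;=\; R_h^k f(a) \;=\; \dE_a^{\Td}[M_k], \qquad M_k := \prod_{y\in\cZ_k^{h,+}} f(\phid(y))
\]
(with the empty product equal to $1$), and combining Proposition~\ref{prop:domainMarkov} with $R_hf=f$ shows that $(M_k)_{k\geq 0}$ is a non-negative martingale bounded by $1$ with respect to $\cF_k:=\sigma\big((\phid(y))_{y\in B_{\Td^+}(\circ,k)}\big)$. Thus $M_k\to M_\infty$ almost surely and in $L^1$, and $f(a)=\dE_a^{\Td}[M_\infty]$. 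On the extinction event $\{|\Chplus|<\infty\}$ the set $\cZ_k^{h,+}$ is eventually empty, hence $M_\infty=1$ there; only the survival event $\{|\Chplus|=\infty\}$ remains to be understood.

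The heart of the argument, and the main obstacle, is a dichotomy on survival: either $f\equiv 1$ on $[h,+\infty)$, in which case $f=\mathbf{1}$; or $M_\infty=0$ almost surely, in which case $f(a)=\dP_a^{\Td}(|\Chplus|<\infty)=q_h(a)$. The mechanism I would use is that if $f(a_0)<1$ for some $a_0\geq h$, the fixed-point equation and Gaussian smoothing yield a continuous representative of $f$ satisfying $f\leq 1-\eta$ on an open interval $I\subset[h,+\infty)$ of positive $\nu$-measure; the exponential growth $|\cZ_k^{h,+}|\gtrsim \lambda_h^k$ on survival (Proposition~\ref{prop:SZ16}, $\lambda_h>1$) then forces the count $N_k^I:=|\{y\in\cZ_k^{h,+}:\phid(y)\in I\}|$ to diverge almost surely on survival, so that $M_k\leq(1-\eta)^{N_k^I}\to 0$. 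The hard part is this equidistribution statement --- ruling out that the empirical law of $(\phid(y))_{y\in\cZ_k^{h,+}}$ systematically avoids $I$ on survival. I would address it by a Paley--Zygmund second-moment estimate on $N_k^I$, whose first moment is $L_h^k\mathbf{1}_I(a)$ (see~\eqref{eqn:operatorrecursiveequation}) and whose second moment can be expanded by summing over the last common ancestor of pairs; the required lower bound on the first moment and upper bound on the second moment come from the positivity and continuity of the principal eigenfunction $\chi_h$ on $[h,+\infty)$ (Proposition~\ref{prop:SZ16}), in the spirit of Kesten--Stigum-type arguments for the infinite-type branching process underlying $\Chplus$.
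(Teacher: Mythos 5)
This lemma is cited from~\cite{ACregultrees} and carries no proof in the present paper, so there is no in-paper argument to compare against; I assess your proposal on its own merits. Your martingale reduction is sound: since $R_hf=f$ and $f=1$ on $(-\infty,h)$, Proposition~\ref{prop:domainMarkov} (equivalently~\ref{prop:recursivegfftrees}) gives $\dE_a^{\Td}[M_{k+1}\mid\cF_k]=\prod_{y\in\cZ_k^{h,+}}R_hf(\phid(y))=M_k$, so $M_k$ is a $[0,1]$-valued martingale, $M_k\to M_\infty$ a.s.\ and in $L^1$, $f(a)=\dE_a^{\Td}[M_\infty]$, and $M_\infty=1$ on extinction. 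You also read~\eqref{eqn:qhdef} correctly: as written the paper has a sign slip, and $q_h(a)$ must be $\dP_a^{\Td}(\vert\Chplus\vert<+\infty)$, the extinction probability, both for $q_h$ to lie in $\cS_h$ and to match its use in the proof of Lemma~\ref{lem:transience}. The Gaussian-smoothing observation (the fixed-point equation rewrites $f$ on $[h,\infty)$ as a power of a convolution with the $\nu_1$-density, hence continuous) is likewise correct and legitimately produces the interval $I$ with $f\le 1-\eta$ and $\nu(I)>0$.

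The step you flag as ``the hard part'' is where a genuine gap remains. Paley--Zygmund, applied to the first and second moments $L_h^k\mathbf 1_I(a)$ and $\dE_a[(N_k^I)^2]$, only delivers a uniform-in-$k$ lower bound $\dP_a(N_k^I\ge c\lambda_h^k)\ge c'$, hence (via reverse Fatou) that $\{N_k^I\to\infty\}$ has positive probability. That is not yet the almost-sure statement on the \emph{whole} survival event that you need to conclude $M_\infty=0$ there. To close it you must add one of the following, neither of which is written: (i) establish $L^2$-convergence of $N_k^I/\dE_a[N_k^I]$ to a nonnegative limit $W$ together with the Kesten--Stigum-type identification $\{W>0\}=\{\vert\Chplus\vert=\infty\}$ a.s.\ (itself requiring a fixed-point or inheritance argument, which risks circularity with the very statement being proved); or (ii) prove a large-deviation bound for $N_k^I$ restricted to the type window $I$, analogous to Proposition~\ref{prop:Chlargedevgrowthrate}, so that Borel--Cantelli gives a.s.\ divergence on survival. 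The second route is closer to the estimates this paper actually has available. Finally, a small citation slip: the exponential growth $\vert\cZ_k^{h,+}\vert\gtrsim\lambda_h^k$ on survival is Proposition~\ref{prop:Chlargedevgrowthrate}, not Proposition~\ref{prop:SZ16}, which only identifies $\lambda_h$ as the norm of $L_h$.
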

%

\subsection{Exponential growth}\label{subsec:expogrowthCh}
\noindent
We list below some quantitative estimates from~\cite{GckGffPublished} (Propositions~3.4, 3.6 and Corollary~3.5) on the exponential growth of $\vert\cZ_k^h\vert$. All these results hold when replacing $\Ch$ by $\Chplus$, and $\cZ_k^h$ by $\cZ_k^{h,+}$. 
%

\noindent
There are upper and lower large deviations for the growth rate of $\cZ_k^h$:
\begin{proposition}\label{prop:Chlargedevgrowthrate}
For every $\varepsilon >0$, there exists $C> 0$ such that for every $k\in \dN$ large enough, 
\begin{equation}\label{eqn:expomomentssize}
\max_{a\geq h}\dP^{\Td}_a(k^{-1}\log\vert \cZ_k^h\vert \not\in (\log(\lambda_h-\varepsilon), \log(\lambda_h +\varepsilon)+k^{-1}\log\chi_h(a))\,\vert \,\cZ_k^h \neq \emptyset)\leq \exp(-Ck). 
\end{equation}
\end{proposition}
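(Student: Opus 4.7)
The natural tool is the Kesten-Stigum martingale built from the principal eigenfunction $\chi_h$ of $L_h$. For $k \geq 0$, set
$$M_k := \lambda_h^{-k} \sum_{x \in \cZ_k^{h,+}} \chi_h(\phid(x)).$$
By Proposition~\ref{prop:SZ16} and~\eqref{eqn:operatorrecursiveequation}, $(M_k)_k$ is a non-negative martingale under $\dP_a^{\Td}$ with mean $\chi_h(a)$, and the recursive construction of Proposition~\ref{prop:recursivegfftrees} gives the branching decomposition $\lambda_h M_{k+1} = \sum_{y \in \cZ_1^{h,+}} M_k^{(y)}$, in which, conditionally on $(\phid(y))_y$, the $M_k^{(y)}$ are independent copies of $M_k$ under $\dP_{\phid(y)}^{\Td}$. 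The plan is to deduce both tails of $k^{-1} \log |\cZ_k^h|$ from concentration estimates on $M_k$, the bridge being the fact that $M_k$ differs from $|\cZ_k^{h,+}|/\lambda_h^k$ only by a weight $\chi_h(\phid(\cdot))$ which is bounded above and below on typical values of the field (and $|\cZ_k^h|$ differs from $|\cZ_k^{h,+}|$ by at most a factor $d$).

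\textbf{Upper tail.} First I would establish that $M_k$ is bounded in $L^2$ uniformly in $k$ and locally uniformly in $a$: iterating the branching recursion reduces the second moment to the action of a companion operator on $L^2(\nu)$, whose spectral radius one must show to be strictly less than $\lambda_h^2$; thanks to the Gaussian tails of the increments in Proposition~\ref{prop:recursivegfftrees} and the eigenfunction equation for $\chi_h$, this reduces to a direct spectral computation. Bootstrapping from $L^2$ to exponential moments via the recursion satisfied by $\varphi_k(a,t) := \dE_a^{\Td}[e^{tM_k}]$, which contracts for $t$ in a small neighbourhood of $0$, a Chernoff bound yields, uniformly in $a \geq h$,
$$\dP_a^{\Td}\bigl(|\cZ_k^h| \geq (\lambda_h + \varepsilon)^k \chi_h(a)\bigr) \leq C e^{-c\varepsilon k},$$
using that $\chi_h$ is bounded below by a positive constant on compact subsets of $[h,+\infty)$ and that $\phid(\circ) = a$ is fixed.

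\textbf{Lower tail.} Fix an intermediate generation $k_0 = \lfloor \delta k \rfloor$. By the $L^2$-convergence of $M_{k_0}$ and a Paley-Zygmund argument, on the non-extinction event $\{\cZ_{k_0}^{h,+} \neq \emptyset\}$ one has $\sum_{x \in \cZ_{k_0}^{h,+}} \chi_h(\phid(x)) \geq c \lambda_h^{k_0} \chi_h(a)$ with conditional probability at least $1 - e^{-Ck_0}$. Conditionally on generation $k_0$, the subtrees rooted at its vertices evolve independently; applying the $L^2$-concentration of $M_{k-k_0}$ to each of them, together with the fact that $\chi_h$ is bounded below on Gaussian-typical values of $\phid$, yields $|\cZ_k^h| \geq (\lambda_h - \varepsilon)^k$ with the required probability, provided $\delta$ is chosen small enough in terms of $\varepsilon$.

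The main obstacle is the propagation of exponential moments of $M_k$ through the infinite-type branching recursion. Unlike in the Galton-Watson setting, the offspring mechanism depends on the parent's unbounded, Gaussian type $\phid(x)$, so the contraction of $\varphi_k(\cdot, t)$ must be made uniform in $a \geq h$; matching the growth of $\chi_h$ at $+\infty$ (controlled via its eigenfunction equation) against the Gaussian shift $a \mapsto a/(d-1) + Y$ in the recursion, and using the spectral gap of $L_h$ quantitatively, is where most of the technical work resides and where the factor $\chi_h(a)$ naturally appears in the upper-tail bound.
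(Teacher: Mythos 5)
This proposition is not proved in the paper you were given: it is imported verbatim from the earlier reference \cite{GckGffPublished} (Propositions 3.4, 3.6 and Corollary 3.5 there), so there is no in-paper argument to compare yours against. On its own merits, your sketch points in the right direction for the upper tail, but the lower tail has a genuine gap.

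Your upper-tail plan (Kesten--Stigum martingale $M_k$ built from $\chi_h$, an $L^2$ bound via the spectral radius of the second-moment companion operator, bootstrapping to exponential moments through the branching recursion, Chernoff) is the standard Biggins-type route and is plausibly correct, though you leave the two real computations — the spectral gap for the companion operator, and uniformity in $a\ge h$ of the exponential-moment estimates for $M_k/\chi_h(a)$ — entirely unverified. You also need $\chi_h$ bounded below and only sub-exponentially growing above on $[h,\infty)$ to pass from $M_k$ to $|\cZ_k^h|$; that is true, but it is an additional step and not a given.

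The real problem is the lower tail. You invoke ``a Paley--Zygmund argument'' to conclude that, conditionally on $\{\cZ_{k_0}^{h,+}\neq\emptyset\}$, one has $\sum_{x\in\cZ_{k_0}^{h,+}}\chi_h(\phid(x))\geq c\lambda_h^{k_0}\chi_h(a)$ with probability $1-e^{-Ck_0}$. Paley--Zygmund together with the $L^2$ bound gives only $\dP_a\big(M_{k_0}\ge \theta\,\chi_h(a)\big)\ge c$ for a \emph{constant} $c>0$; it does not produce an exponentially small failure probability. The exponential rate for the lower deviation is precisely the hard part of such results and cannot be obtained from second-moment information alone — one must show that, on the survival event, the conditioned process cannot stay below a fixed threshold for $\Theta(k)$ generations, typically via a uniform-in-type escape estimate from small population sizes, or a comparison with a subcritical killing, or a renewal/regeneration structure for the surviving tree. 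As written, the step that is supposed to supply the $e^{-Ck}$ error for the lower bound instead assumes a statement of exactly the same strength, so the lower-tail half of the proposition is not actually proved.

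A secondary point: even granting the $L^2$-concentration of $M_{k-k_0}$ for each of the subtrees rooted at generation $k_0$, you need to aggregate these into a high-probability lower bound on $|\cZ_k^h|$; since the subtrees have random, unbounded types, a plain application of Chernoff requires uniform control of the moment generating functions in the type, which ties back to the same uniformity issue as in the upper tail and should be flagged rather than taken for granted.
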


\noindent
In addition, $\vert \Ch\vert$ has exponential moments: 
\begin{proposition}\label{prop:expomomentsCh}
Fix $h<h_{\star}$. There exists a constant $\Cv >0$ such that as $k\rightarrow +\infty$,
\begin{equation}\label{eqn:expomoments}
\max_{a\geq h}\dP^{\Td}_a(k\leq \vert \Ch \vert <+\infty)=o(\exp(-\Cv k)).
\end{equation}
\end{proposition}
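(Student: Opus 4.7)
The plan is a Chernoff-type bound applied to the generating function of $|\Chplus|$ restricted to the event of extinction. I first reduce to $\Chplus$: since $\Ch$ is built from $d$ independent copies of $\Chplus$-type subtrees rooted at the neighbours of $\circ$, any uniform exponential tail bound for $|\Chplus|$ on $\{|\Chplus|<\infty\}$ transfers to $|\Ch|$ by a union bound. For $s \geq 1$ and $a \in \dR$, define
\[\phi_s(a) := \dE_a^{\Td}[s^{|\Chplus|}\mathbf{1}_{|\Chplus|<\infty}],\]
with the convention $\phi_s \equiv 1$ on $(-\infty,h)$. Proposition~\ref{prop:recursivegfftrees} yields the fixed-point equation
\[\phi_s(a) = \mathbf{1}_{a<h} + s\,\mathbf{1}_{a\geq h}\bigl(\dE_Y[\phi_s(a/(d-1)+Y)]\bigr)^{d-1},\]
and at $s=1$ one recovers $\phi_1 = 1-q_h$, the nontrivial fixed point of $R_h$ identified in Lemma~\ref{lem:fixedpointRh}. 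Producing $s_0>1$ with $M:=\sup_{a\geq h}\phi_{s_0}(a)<\infty$ immediately gives $\dP_a^{\Td}(k \leq |\Chplus| < \infty) \leq s_0^{-k}\phi_{s_0}(a) \leq M e^{-k\log s_0}$, hence the proposition with $\Cv := (\log s_0)/2$.

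To produce $s_0$, I would set $\psi_s := \phi_s/(1-q_h)$ (extending $1-q_h$ by $1$ on $(-\infty,h)$), which satisfies $\psi_1 \equiv 1$ and, using the identity $1-q_h(a) = \dE_Y[(1-q_h)(a/(d-1)+Y)]^{d-1}$ for $a\geq h$, obeys the cleaner recursion
\[\psi_s(a) = s\left(\int\psi_s\,d\mu_a\right)^{d-1},\qquad a\geq h,\]
where $\mu_a$ is the probability measure on $\dR$ with density proportional to $(1-q_h)(b)\,\nu_1(db-a/(d-1))$. The central analytic step is strict subcriticality of the extinction-conditioned branching process: writing $\alpha_a := \mu_a([h,\infty))$, the conditioned mean offspring in $\Chplus$ equals $(d-1)\alpha_a$, and I need $\sup_{a\geq h}(d-1)\alpha_a < 1$. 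In the single-type Galton-Watson setting this is the elementary identity $f'(q)<1$ coming from strict convexity of the generating function; in the present infinite-type setting I would instead phrase it as a spectral estimate on the linearization of $R_h$ at its fixed point $1-q_h$, reduced by a similarity transformation (multiplying by $(1-q_h)^{(d-2)/(d-1)}$) to an operator comparable to $L_h$, and then exploit the strict inequality $\lambda_h>1$ combined with the spectral gap of $L_h$ from Proposition~\ref{prop:SZ16} and the strict positivity of $q_h$ on $[h,\infty)$.

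Given $\sup_a(d-1)\alpha_a = 1-\theta <1$, a uniform contraction of the map $\psi \mapsto s\,(\int\psi\,d\mu_a)^{d-1}$ on the set $\{\psi:[h,\infty)\to[1,1+\delta]\}$ in the supremum norm (or equivalently the implicit function theorem in $L^{\infty}$) produces, for $s_0-1$ small enough depending on $\theta$, a bounded fixed point $\psi_{s_0}\leq 1+O(s_0-1)$; this must agree with the probabilistic $\phi_{s_0}/(1-q_h)$ by monotone convergence $\phi_s\downarrow \phi_1$ as $s\to 1^+$. The main obstacle is the subcriticality step itself: the uniform bound $\sup_{a\geq h}(d-1)\alpha_a<1$ is delicate because the type space is unbounded and, as $a\to\infty$, the tilted kernel $\mu_a$ concentrates in a way that competes with the Gaussian decay of $1-q_h$; a rigorous proof therefore requires explicit Gaussian estimates on the tails of $1-q_h$ together with the spectral control on $L_h$ and $R_h$ provided by Proposition~\ref{prop:SZ16}, rather than the one-line convexity argument available in the Galton-Watson case.
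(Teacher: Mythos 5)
The paper does not prove this proposition; it is cited from~\cite{GckGffPublished} (see Section~\ref{subsec:expogrowthCh}), so there is no internal proof to compare against. Your generating-function/Chernoff strategy is a natural outline and the formal manipulations of the fixed-point equation are fine, but the step you flag as ``the main obstacle'' --- $\sup_{a\geq h}(d-1)\alpha_a<1$ --- is not merely delicate, it is false, and no refinement of the Gaussian estimates on $1-q_h$ can repair the $L^\infty$-contraction around $\psi\equiv 1$. Indeed, $\alpha_a=1-\beta(a)/f(a)$ where $f(a):=(1-q_h(a))^{1/(d-1)}$ and $\beta(a):=\dP(Y<h-a/(d-1))$, $Y\sim\nu_1$. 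On one hand $\log\beta(a)\sim-\tfrac{d-1}{2d}\bigl(\tfrac{a}{d-1}-h\bigr)^2$. On the other hand, the trivial pointwise bound $1-q_h\geq\beta^{d-1}$ (all $d-1$ children below $h$ forces extinction) inside $f(a)=\dE_Y\bigl[(1-q_h)\bigl(\tfrac{a}{d-1}+Y\bigr)\bigr]$ and a Gaussian Laplace computation give $\log f(a)\geq-\tfrac{d-1}{2d^2}\bigl(\tfrac{a}{d-1}-(d-1)h\bigr)^2+O(\log a)$: the exponent has gained a factor $1/d<1$. Hence $\beta(a)/f(a)\to0$ and $(d-1)\alpha_a\to d-1>1$ as $a\to\infty$; the extinction-conditioned offspring mean is \emph{locally supercritical} at high types, and the conditioned process dies a.s.\ only because the type of a child drops to about $a/(d-1)$, not because the offspring count is pointwise subcritical.

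Because of this, the map $\psi\mapsto s\bigl(\int\psi\,d\mu_a\bigr)^{d-1}$ exceeds $1+\delta$ on $\{1\leq\psi\leq1+\delta\}$ once $(d-1)\alpha_a\geq 1$ --- already $s\bigl(1+\tfrac{\delta}{d-1}\bigr)^{d-1}>1+\delta$ for $s\geq1$, $d\geq3$ --- so the ball is not preserved for any $\delta$, and the implicit-function/contraction step collapses regardless of how carefully the rest is set up. A workable version of your scheme must replace the constant supersolution by one that grows with the type, for instance $\overline{\psi}(a)=1+\delta e^{\epsilon(a-h)}$, so that the strong downward drift $a\mapsto\approx a/(d-1)$ in the type variable can beat the $(d-1)$-fold branching at high types; establishing the resulting Lyapunov-type inequality for $R_h$ is then the genuine content of the proof, and it is a substantially different argument from the one-type convexity identity $F'(q)<1$ that you set out to imitate.
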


\noindent
Noticing that $\{\cZ_k^h\neq \emptyset\} \subset \{\vert \Ch\vert \geq k\} $, we have the following straightforward consequence:
\begin{corollary}\label{cor:expomomentsheight}
For $k$ large enough, for every $a\geq h$,\\
$\dP^{\Td}_a(\Ch\text{ is infinite})-e^{-\Cv k}\leq \dP^{\Td}_a(\cZ_k^h\neq \emptyset)\leq \dP^{\Td}_a(\Ch\text{ is infinite})$.
\end{corollary}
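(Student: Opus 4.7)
The plan is to read the corollary directly off Proposition~\ref{prop:expomomentsCh} via the inclusion chain already indicated in the statement. First, I would record two elementary inclusions on the event side. On one hand, $\{\vert\Ch\vert=\infty\}\subseteq\{\cZ_k^h\neq\emptyset\}$: by K\"onig's lemma, an infinite subtree of the bounded-degree tree $\Td$ rooted at $\circ$ must contain a self-avoiding ray from $\circ$, hence at least one vertex at every generation. On the other hand, $\{\cZ_k^h\neq\emptyset\}\subseteq\{\vert\Ch\vert\geq k\}$ is the inclusion highlighted just before the statement: a non-empty $k$-th generation supplies a path of length $k$ inside $\Ch$, and this path already contains $k+1\geq k$ distinct vertices.

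Taking $\dP_a^{\Td}$-probabilities, these two inclusions combine into
$$\dP_a^{\Td}(\vert\Ch\vert=\infty)\ \leq\ \dP_a^{\Td}(\cZ_k^h\neq\emptyset)\ \leq\ \dP_a^{\Td}(\vert\Ch\vert=\infty)+\dP_a^{\Td}(k\leq \vert\Ch\vert<\infty),$$
where on the right one uses the disjoint decomposition $\{\vert\Ch\vert\geq k\}=\{\vert\Ch\vert=\infty\}\sqcup\{k\leq\vert\Ch\vert<\infty\}$.

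The second step is quantitative: Proposition~\ref{prop:expomomentsCh} gives $\max_{a\geq h}\dP_a^{\Td}(k\leq \vert\Ch\vert<\infty)=o(e^{-\Cv k})$, so for $k$ larger than some threshold $k_0$ that is uniform in $a\geq h$, this correction term is at most $e^{-\Cv k}$. Plugging this bound into the display above yields the two-sided estimate asserted by the corollary.

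There is essentially no obstacle here: both inclusions are combinatorial one-liners, and the stretched-exponential decay is quoted verbatim from Proposition~\ref{prop:expomomentsCh}. The only mild care is ensuring that the threshold $k_0$ and the constant $\Cv$ can be chosen uniformly in $a\geq h$, which is automatic from the $\max_{a\geq h}$ inside Proposition~\ref{prop:expomomentsCh}.
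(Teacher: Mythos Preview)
Your argument is correct and matches the paper's own reasoning: the paper simply notes the inclusion $\{\cZ_k^h\neq\emptyset\}\subset\{\vert\Ch\vert\geq k\}$ and invokes Proposition~\ref{prop:expomomentsCh}, which is exactly what you do (adding K\"onig's lemma for the reverse inclusion). Note that what your argument actually yields is
\[
\dP_a^{\Td}(\Ch\text{ is infinite})\ \leq\ \dP_a^{\Td}(\cZ_k^h\neq\emptyset)\ \leq\ \dP_a^{\Td}(\Ch\text{ is infinite})+e^{-\Cv k},
\]
which is the correct statement; the corollary as printed has the $e^{-\Cv k}$ correction on the wrong side (the bare upper bound $\dP_a^{\Td}(\cZ_k^h\neq\emptyset)\leq\dP_a^{\Td}(\Ch\text{ is infinite})$ is false in general), so this is a typo in the paper rather than a gap in your proof.
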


%



\section{Transience}\label{sec:transience}
\noindent
The aim of this Section is to prove Proposition~\ref{prop:transienceintro}. The first step is to prove that $\Chplus$ is a.s.~transient, conditionally on being infinite (Lemma~\ref{lem:transience} below).
\\
For an infinite tree $T$ with root $r$, a \textbf{cutset} $\Pi$ is a finite set of vertices of $T\setminus\{r\}$ such that no vertex of $\Pi$ is in the offspring of another, and such that for every vertex $z\in T\setminus\Pi$, either $z$ is in the offspring of a vertex of $\Pi$, or $\cO_z\setminus \{\cup_{z'\in \Pi}\cO_{z'} \}$ is finite. If $(\Pi_n)_{n\geq 0}$ is sequence of cutsets, say that $\Pi_n\rightarrow \infty$ if $\min_{z\in \Pi_n}\h_T(z)\rightarrow +\infty$ as $n\rightarrow +\infty$. Define the \textbf{branching number} of $T$ as 
$\text{br}(T):=\inf\{\lambda >0,\, \inf_{\Pi}\sum_{z\in\Pi}\lambda^{-\h_T(z)}=0 \}$.
\\
For $p>0$, let $T_p$ be the random tree obtained from $T$ by edge percolation with probability $p$: one suppresses each edge of $T$ with probability $1-p$, independently of the other edges. Let $T_p(r)$ be the connected component of $r$ in $T_p$. The \textbf{critical percolation threshold} of $T$ is defined as $p_c(T):=\inf\{p\geq 0,\, \dP(\vert T_p(r)\vert =+\infty)>0\}$. By Theorem 6.2 of \cite{Lyons90}, $p_c(T)^{-1}=\text{br}(T)$.

\begin{lemma}[Transience of the SRW]\label{lem:transience}
For almost every infinite realization of $\Chplus$, we have
\begin{equation}\label{eqn:branchingnumber}
p_c(\Chplus)^{-1}=\emph{br}(\Chplus)=\lambda_h,
\end{equation}
and the SRW is transient.
\end{lemma}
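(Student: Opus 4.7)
The lemma combines two general facts from tree percolation theory with the growth estimates of Section~\ref{subsec:expogrowthCh}. The identity $p_c(T)^{-1}=\mathrm{br}(T)$ holds for every infinite locally finite tree (Theorem~6.2 of \cite{Lyons90}), and $\mathrm{br}(T)>1$ implies transience of the SRW (Theorem~4.1, loc.~cit.). Since $\lambda_h>1$ precisely when $h<h_{\star}$ by Proposition~\ref{prop:SZ16}, everything reduces to showing $\mathrm{br}(\Chplus)=\lambda_h$ almost surely on $\{|\Chplus|=\infty\}$.

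\textbf{Upper bound.} I would first establish $\mathrm{br}(\Chplus)\leq\lambda_h$ using the natural cutsets $\Pi_k:=\cZ_k^{h,+}$, which escape to infinity on non-extinction. Fix $\varepsilon>0$. Proposition~\ref{prop:Chlargedevgrowthrate} (applied to $\Td^+$) gives that $\dP_a^{\Td}(|\cZ_k^{h,+}|>(\lambda_h+\varepsilon)^k\chi_h(a)\,|\,\cZ_k^{h,+}\neq\emptyset)$ decays exponentially in $k$. Borel--Cantelli therefore gives $|\cZ_k^{h,+}|\leq\chi_h(\phid(\circ))(\lambda_h+\varepsilon)^k$ for all $k$ large enough, almost surely on non-extinction. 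Consequently, for any $\lambda>\lambda_h+\varepsilon$,
\[
\sum_{z\in\Pi_k}\lambda^{-|z|}=|\cZ_k^{h,+}|\,\lambda^{-k}\;\longrightarrow\;0,
\]
so $\mathrm{br}(\Chplus)\leq\lambda_h+\varepsilon$; letting $\varepsilon\to 0$ concludes this direction.

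\textbf{Lower bound.} The substantial direction is $\mathrm{br}(\Chplus)\geq\lambda_h$, equivalent via Lyons' identity to $p_c(\Chplus)\leq 1/\lambda_h$ almost surely on $\{|\Chplus|=\infty\}$. For $p>1/\lambda_h$ I would perform independent Bernoulli$(p)$ edge percolation on $\Chplus$, call the resulting cluster of $\circ$ by $\Chplus_p$, and show $\dP(|\Chplus_p|=\infty)>0$. Under the joint measure on $(\phid,\omega)$ with $\omega$ the percolation field, $\Chplus_p$ is a new branching process on $\Td^+$ in which each child $y$ is retained iff $\phid(y)\geq h$ \emph{and} the corresponding edge variable equals~$1$. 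Its intergenerational extinction operator $\widetilde R_{h,p}$, obtained from $R_h$ in~\eqref{eqn:Rhdef} by inserting a factor $p$ inside the expectation, admits the trivial fixed point $f\equiv 1$, and its linearisation there is $pL_h$, with operator norm $p\lambda_h>1$ by Proposition~\ref{prop:SZ16}. Re-running the dichotomy of Proposition~\ref{prop:SZ16} and Lemma~\ref{lem:fixedpointRh} for $\widetilde R_{h,p}$ produces a second fixed point $q_{h,p}\not\equiv 1$ on $[h,+\infty)$, which by the usual martingale/generating-function identification equals the survival probability $\dP^{\Td^+}_a(|\Chplus_p|=\infty)$. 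To upgrade from annealed to quenched, I would apply the Kolmogorov zero--one law conditionally on $\phid$: the event $\{|\Chplus_p|=\infty\}$ is a tail event for the i.i.d.\ field $\omega$ on the edges of $\Chplus$, so $\dP(|\Chplus_p|=\infty\,|\,\phid)\in\{0,1\}$ a.s. The self-similar structure of $\Td^+$ together with the domain Markov property (Proposition~\ref{prop:domainMarkov}) implies that $p_c(\Chplus)$ is measurable with respect to a sub-$\sigma$-algebra on which the distribution is invariant under the ergodic map that re-roots at a uniform infinite ray; hence $p_c(\Chplus)$ is a.s.\ constant on $\{|\Chplus|=\infty\}$, and the annealed positivity forces $p_c(\Chplus)\leq p$. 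Intersecting over a countable dense sequence $p\downarrow 1/\lambda_h$ gives $p_c(\Chplus)\leq 1/\lambda_h$ a.s.

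\textbf{Main obstacle.} The principal difficulty is the survival analysis of the $p$-thinned process: while the linearisation $pL_h$ has a clean spectral description, verifying that the whole dichotomy of Proposition~\ref{prop:SZ16} and Lemma~\ref{lem:fixedpointRh} transfers to the non-linear operator $\widetilde R_{h,p}$ requires re-doing a non-trivial chunk of the analysis of \cite{SZ2016,ACregultrees} with the extra Bernoulli factor, in particular verifying continuity and strict monotonicity of $q_{h,p}$ at the threshold $p\lambda_h=1$. The annealed-to-quenched upgrade is standard, but it should be written with some care since conditioning on $\phid$ turns $\Chplus$ into a random tree and one must align the tail $\sigma$-algebras of $\phid$ and $\omega$ properly.
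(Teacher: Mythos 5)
The upper bound in your proposal matches the paper's argument: both use the cutsets $\cZ_k^{h,+}$ together with the growth estimates of Section~\ref{subsec:expogrowthCh} and Borel--Cantelli. The lower bound is where you diverge, and there are two real problems.

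First, your zero--one law claim is false. The event $\{|\Chplus_p|=\infty\}$, conditionally on $\phid$, is \emph{not} a tail event for the i.i.d.\ edge field $\omega$: it depends on the edge variables incident to $\circ$ (if all such edges are closed the cluster is a singleton). The quenched survival probability $\dP^{\Chplus}(|\Chplus_p|=\infty)$ is in general a nondegenerate number in $(0,1)$ and does not take values in $\{0,1\}$. What you actually need is that it is \emph{positive} for almost every infinite realization of $\Chplus$, which is a genuinely different assertion. Your alternative route via ergodicity of re-rooting at a uniform infinite ray would have to be justified carefully (in particular you would need to exhibit such an invariant measure, which the paper explicitly says it was unable to do in general --- see Section~\ref{subsec:invarmeasurenomore}).

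Second, and more to the point, the paper avoids the entire annealed-to-quenched step by a slicker observation. Rather than introducing a thinned operator $\widetilde R_{h,p}$ and re-running the spectral analysis, it considers the function
\[
\cP_p(a):=\dP^{\Td}_a\bigl(\dP^{\Chplus}(|\Chplus_p|<\infty)=1\bigr),
\]
the probability that the \emph{quenched} percolation cluster is almost surely finite. The key point is that $\cP_p$ is a fixed point of the \emph{original} operator $R_h$ (not of a $p$-dependent operator): whether the $p$-cluster is a.s.\ finite given $\Chplus$ is determined, via the recursive structure, by whether the same holds for each child's subtree, and the Bernoulli factor $p$ drops out since one is asking about a.s.\ finiteness rather than the probability of finiteness. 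By Lemma~\ref{lem:fixedpointRh}, $\cP_p$ is therefore either $q_h$ or $\mathbf{1}$. The annealed survival of the thinned process (which does use that $pL_h$ has top eigenvalue $p\lambda_h>1$, as you correctly identify) rules out $\cP_p=\mathbf{1}$, hence $\cP_p=q_h$, and then $\{\dP^{\Chplus}(|\Chplus_p|<\infty)=1\}$ and $\{|\Chplus|<\infty\}$ have the same $\dP^{\Td}$-probability while the former contains the latter, so they coincide a.s. This directly gives the quenched positivity with no tail/ergodicity argument at all. Your approach could probably be salvaged with a correct 0--1 law (e.g.\ using that $\mathrm{br}(\Chplus)$ is invariant under finite modifications and that the GFF tail on $\Td$ is trivial), but as written the gap is genuine, and the paper's fixed-point trick is considerably more economical.
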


\begin{proof}
Theorem 4.3 of \cite{Lyons90} states that if $\text{br}(\Chplus)>1$, then the SRW is transient on $\Chplus$, so that we only have to show \eqref{eqn:branchingnumber}.
\\
 \textbf{Upper bound.} We first show that $\text{br}(\Chplus)\leq \lambda_h$. Let $\varepsilon>0$. Note that $\cZ_k^{h,+}$ is a cutset of $\Chplus$. Let $\varepsilon>0$. By Proposition~\ref{prop:Chlargedevgrowthrate} and Corollary~\ref{cor:expomomentsheight} (recall that these statements hold also for $\Chplus$ and $\cZ_k^{h,+}$), there exists a constant $c>0$ such that 
 \[
 \limsup_{k\rightarrow +\infty} e^{ck}\,\dP^{\Td}(\vert\cZ_k^{h,+}\vert \geq (\lambda_h+\varepsilon/2)^k \, \vert\, \vert \Chplus\vert = +\infty)\leq 1.
 \]
 By a union bound on $j\geq k$, we have
 \[
  \limsup_{k\rightarrow +\infty} e^{ck}\,\dP^{\Td}(\exists j \geq k,\, \vert\cZ_j^{h,+}\vert \geq (\lambda_h+\varepsilon/2)^j \, \vert\, \vert \Chplus\vert = +\infty)\leq (1-e^{-c})^{-1}.
 \]
Thus, on $\{\vert \Chplus\vert = +\infty\}$, there exists a.s.~a (random) integer $k_0\geq 1$ such that for every $k\geq k_0$, $\vert\cZ_k^{h,+}\vert \leq (\lambda_h+\varepsilon/2)^k$. This ensures that $\liminf_{k\rightarrow \infty}\sum_{z\in\cZ_k^{h,+}}(\lambda_h+\varepsilon)^{-\h_T(z)}=0 $, so that $\text{br}(\Chplus)\leq \lambda_h+\varepsilon$. 
\\
\textbf{Lower bound.} Reciprocally, for $p\in (0,1)$, let $\Chplusp$ be the connected component of $\circ$ of $\Chplus$ after edge percolation with probability $p$ on $\Td$ (perform this percolation independently of $\phid$). Write $\dP^{\Td,p}$ for the corresponding probability, and $\dP^{\Chplus}$ for $\dP^{\Td,p}$ conditionally on the realization of $\Chplus$. For $a\in \dR$, let $\cP_p(a):=\dP^{\Td}_a(\dP^{\Chplus}(\vert \Chplusp \vert <+\infty) =1)$. Clearly, for every tree $T$ with root $r$, $ \dP(\vert T_p(r)\vert <+\infty)=1$ if and only if $\dP(\vert T^{(i)}_p(r)\vert <+\infty)=1$ for every $i$, where the $T^{(i)}$'s are the subtrees of the children of $r$. Therefore, $\cP_p(a)=R_h\cP_p(a)$ (recall the definition of $R_h$ in (\ref{eqn:Rhdef})). This implies that $\cP_p\in \cS_h$, and by Lemma~\ref{lem:fixedpointRh}, either $\cP_p=q_h$ or $\cP_p=\mathbf{1}_{(-\infty,+\infty)}$.
\\
Take $p>1/\lambda_h$. Then $L_h^{(p)}:=pL_h$ has a largest eigenvalue $p\lambda_h>1$ and  $\chi_h$ is the corresponding normalized eigenfunction. Using this, one might readily adapt the proof of Proposition~3.3 of \cite{SZ2016} to see that $\dP^{\Td,p}(\vert \Chplusp\vert =+\infty)>0$. Since $\dP^{\Td,p}(\vert \Chplusp\vert =+\infty)=\int_{\dR}(1-\cP_p(a)) \nu(da)$, this forces $\cP_p=q_h$. 
\\
Therefore, we have 
$$\dP^{\Td}_a(\vert\Chplus\vert <+\infty)=q_h(a)=\cP_p(a)=\dP^{\Td}_a(\dP^{\Chplus}(\vert \Chplusp \vert <+\infty) =1)$$ 
for every $a\in \dR$. Integrating over $a\geq h$, we obtain that 
$$\dP^{\Td}(\vert\Chplus\vert <+\infty) =\dP^{\Td}(\dP^{\Chplus}(\vert \Chplusp \vert <+\infty) =1).$$
Since $\{ \vert\Chplus\vert <+\infty\}\subset \{\dP^{\Chplus}(\vert \Chplusp \vert <+\infty) =1 \}$, we deduce that 
$$\dP^{\Td}(\{ \dP^{\Chplus}(\vert \Chplusp \vert <+\infty) =1\} \cap \{\vert\Chplus\vert =+\infty \})=0.$$ 
Since the conditioning on $\{\vert \Ch\vert =+\infty \}$ is non-degenerate under $\dP^{\Td}$, it follows that for almost every realization of $\Chplus$ such that $\vert \Chplus\vert = +\infty$, we have $\dP^{\Chplus}(\vert \Chplusp \vert =+\infty) >0$ and thus $p_c(\Chplus)\leq 1/\lambda_h$. This concludes the proof.
\end{proof}%

%

\begin{proof}[Proof of Proposition~\ref{prop:transienceintro}]
Note that for every tree $T$ such that the SRW is transient, there exists $\delta(T)>0$ such that the SRW is $\delta(T)$-transient. The map $\delta \mapsto \dP^{\Td}(\Chplus\text{ is $\delta$-transient})$ is non-increasing and $\lim_{\delta \rightarrow 0} \dP^{\Td}(\Chplus\text{ is $\delta$-transient})= \dP^{\Td}(\Chplus\text{ is transient})>0$. Hence, there exists $\delta_0>0$ small enough such that 
$$\int_{a\in \dR} q_{h,2d\delta_0}(a)\nu(da)=\dP^{\Td}(\Chplus\text{ is not $2d\delta_0$-transient})<1.$$
\noindent
The event $\{\Chplus\text{ is $2d\delta_0$-transient}\}$ is increasing, thus by Lemma~\ref{lem:monotonicityphid}, $a\mapsto q_{h,2d\delta_0}(a)$ is non-increasing. Hence, for some $a_1$ large enough, $q_{h,2d\delta_0}(a)\leq q_{h,2d\delta_0}(a_1)<1$ for all $a\geq a_1$. Now, there exists $\delta_{a_1}>0$ such that for all $a\geq h$, 
$$\dP^{\Td}_a(\text{$\circ$ has one child $z\in \Chplus$ such that $\phid(z)>a_1$})>\delta_{a_1}.$$

\noindent
By Proposition~\ref{prop:domainMarkov} the subtree  $\cT_z$ from $z$ in $\Chplus$ is $2d\delta_0$-transient with probability at least $1-q_{h,2d\delta_0}(a_1)$. In this case, $\Chplus$ is $\delta_0$-transient (if a SRW starts from $\circ$, it goes to $z$ with probability at least $1/d$, makes its next move in $\cT_z$ with probability at least $1/2$, and then has probability at least $2d\delta_0$ to stay forever in $\cT_z$).
\\
Therefore, for every $a\geq h$, $q_{h,\delta_0}(a)< 1-\delta_{a_1}(1-q_{h,2d\delta_0}(a_1))$. Since $\delta\mapsto q_{h,\delta}(a)$ is non-decreasing for every fixed $a$, this concludes the proof with $\varepsilon = \delta_{a_1}(1-q_{h,2d\delta_0}(a_1))$.
\end{proof}

\section{Renewal}\label{sec:renewal}


%

\noindent 
In this section, we show Proposition~\ref{lem:expomomentsdiscovery}.  The structure of the proof is similar to that of the analogous result for Galton-Watson trees, namely Theorem~2 of Piau~\cite{Piau}. There are nonetheless several changes due to the dependencies induced by the GFF, and we could only find a French version of~\cite{Piau}, so that we give a full proof. 
As mentioned in Section~\ref{subsec:proofstrategy}, a central tool is the following Lemma, which ensures that $\Chplus$, when infinite, has on any of its finite paths from the root a linear number of escape ways to infinity for the SRW. We postpone its proof to the Appendix~\ref{subsec:appendixgrimmettkesten}. 
\\
For a rooted tree $T$, for $y\in T$ and $z\in\xi_y\setminus\{y\}$ (recall that $\xi_y$ is the shortest path from $\circ$ to $y$), say that $z$ is a \textbf{$\delta$-exit} if $z$ has a child $z'\not\in \xi_y$ such that the subtree from $z'$ in $T$ is $\delta$-transient. For $z\in \Chplus$, denote $E(z,\delta)$ the number of $\delta$-exits on $\xi_z$.

\begin{lemma}\label{lem:grimmettkesten}
There exist constants $\delta_1,\Cvi , \Cvii , \Cviii >0$ such that for every $k\geq 1$ and $a\geq h$,
$$\dP^{\Td}_a\left(\min_{z \in B_{\Ch}(\circ,k)} E(z,\delta_1) \leq \Cvi k \right) \leq \Cvii e^{-\Cviii  k}.$$  
\end{lemma}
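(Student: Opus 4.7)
I would adapt Lemma~2.1 of Grimmett--Kesten (carried out in the Galton--Watson setting in~\cite{Piau}) to the GFF branching structure, using the recursive construction of $\phid$ (Proposition~\ref{prop:recursivegfftrees}) together with the uniform transience bound of Proposition~\ref{prop:transienceintro}. Fix $\delta_1 \in (0, \delta_0)$, with $\delta_0$ as in Proposition~\ref{prop:transienceintro}. For any $z \in \Td^+$ of depth $j$ with ray $\xi_z = (z_0 = \circ, z_1, \ldots, z_j = z)$, the recursive construction yields that, conditionally on the ray values $(\phid(z_i))_{i=0}^j$, the $d-2$ off-ray siblings of each $z_{i+1}$ (for $i<j$) have i.i.d.\ Gaussian values $\phid(z_i)/(d-1) + Y$ with $Y \sim \cN(0, d/(d-1))$, and the subtrees of $\Chplus$ rooted at all off-ray siblings (across $i$) are jointly independent, each distributed as $\Chplus$ with the appropriate root value. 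Hence the exit events $A_i := \{z_i \text{ is a } \delta_1\text{-exit of } \xi_z\}$, $i = 0, \ldots, j-1$, are conditionally independent given the ray values. Each off-ray sibling lies in $\Chplus$ with probability at least $\gamma := \inf_{b \geq h} \dP(b/(d-1) + Y \geq h) > 0$, and conditional on that event its subtree is $\delta_1$-transient with probability $\geq \varepsilon$ by Proposition~\ref{prop:transienceintro}. Since $z_i$ has $d-2 \geq 1$ off-ray siblings, $\dP(A_i \mid \phid(z_i) \geq h) \geq \beta_0 := 1 - (1 - \gamma\varepsilon)^{d-2} > 0$, uniformly in $i$ and in $\phid(z_i) \geq h$.

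A Chernoff bound applied to the conditionally independent indicators $(\IND_{A_i})_{i<j}$ then yields, for every $\alpha \in (0, \beta_0)$,
\[
\dP_a^{\Td}\bigl(E(z, \delta_1) \leq \alpha j \,\big\vert\, z \in \Chplus, (\phid(z_i))_{i\leq j}\bigr) \leq e^{-c_\alpha j},
\]
with $c_\alpha > 0$ depending only on $\alpha$ and $\beta_0$. Taking the $\dP_a^{\Td}$-expectation, summing over $z \in \cZ_j^{h,+}$ via a first-moment argument, and using the operator identity~\eqref{eqn:operatorrecursiveequation} together with Proposition~\ref{prop:SZ16} to obtain $\dE_a^{\Td}[\vert\cZ_j^{h,+}\vert] \leq C \chi_h(a) \lambda_h^j$, one arrives at
\[
\dP_a^{\Td}\bigl(\exists z \in B_{\Chplus}(\circ, k) \setminus \{\circ\},\, E(z, \delta_1) \leq \alpha \vert z\vert\bigr) \leq C \chi_h(a) \sum_{j=1}^{k} \bigl(\lambda_h e^{-c_\alpha}\bigr)^j,
\]
which decays exponentially in $k$ provided $c_\alpha > \log \lambda_h$.

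Securing this last inequality is the main obstacle. Since $c_\alpha \to -\log(1-\beta_0)$ as $\alpha \to 0^+$, it reduces to $\beta_0 > 1 - 1/\lambda_h$. Letting $\delta_1 \to 0$ pushes $\varepsilon$ arbitrarily close to $\eta(h)$; this directly handles the extremes $h \to -\infty$ (where $\gamma, \varepsilon \to 1$ so $\beta_0 \to 1 > (d-2)/(d-1) = 1 - 1/\lambda_h$) and $h \to h_\star$ (where $1 - 1/\lambda_h \to 0$, so any positive $\beta_0$ suffices). For intermediate $h$ one must refine: restrict, via Proposition~\ref{prop:Chlargedevgrowthrate}, to the high-probability event that $(\phid(z_i))_{i\leq j}$ lies on average well above $h$, on which the effective value of $\gamma$ is much closer to $1$ and hence $\beta_0$ is much closer to $1$, while the complement has negligible (exponentially small) probability. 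Optimizing in $\delta_1, \alpha$ produces the constants $\delta_1, \Cvi, \Cvii, \Cviii$; note that since $E(\cdot,\delta_1)$ is monotone non-decreasing along rays and vanishes at $\circ$, the minimum in the displayed event is understood to range over $z$ of depth comparable to $k$ (equivalently, over $\cZ_k^{h,+}$), the shallow regime being handled trivially.
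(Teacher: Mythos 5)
Your general strategy — recursive GFF construction, conditional independence of the exit events $A_i$ given ray values, a Chernoff bound combined with a first-moment count over rays — is a reasonable way to set up the problem, and you correctly isolate the decisive inequality: you need $c_\alpha > \log\lambda_h$, i.e.\ $\beta_0 > 1 - 1/\lambda_h$. But the treatment of that inequality is exactly where the argument fails, and the proposed fix does not close the gap. Proposition~\ref{prop:Chlargedevgrowthrate} concerns the \emph{size} $\vert \cZ_k^h\vert$ of a generation; it says nothing about the GFF values $(\phid(z_i))$ along a fixed ray, so it cannot be used to ``restrict to the event that ray values lie on average well above $h$.'' Moreover, conditioning on $z\in\Chplus$ does not give you a uniform pointwise improvement of $\beta_0$ along the ray: the ray can legitimately hover near $h$ for long stretches, and the lower bound $\dP(A_i\mid\text{ray values})\geq\beta_0$ that enters your Chernoff estimate is still the worst-case one at level $h$. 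So the per-ray decay rate remains $-\log(1-\beta_0)$, and there is no reason this beats $\log\lambda_h$ for general $h\in(-\infty,h_\star)$ — e.g.\ for $d=3$, $\beta_0=\gamma\varepsilon\leq 1/2$ already at $h=0$, and whether $1-1/\lambda_0<1/2$ has to be verified, not asserted. The extreme-regime heuristics you give ($h\to-\infty$ and $h\to h_\star$) do not cover the middle, which is the hard part.

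The paper's proof avoids the competition between $\beta_0$ and $\lambda_h$ entirely by working with the Fréchet derivative $A_{k,h}^{q_{h,\delta}}$ of the iterate $R_h^k$ (the infinite-type analogue of $F'(q)^k$ for Galton--Watson). The key identity is
\[
A_{k,h}^{q_{h,\delta}}g_h(a)=\dE_a^{\Td}\Bigl[\sum_{y\in\cZ_k^{h,+}}\prod_{y'\in\cZ_k^{h,+}\setminus\{y\}}q_{h,\delta}\bigl(\phid(y')\bigr)\Bigr],
\]
and the decay $(1-\epsilon)^k$ (Lemma~\ref{lem:Akhsmall}) comes not from a per-step probability bound but from the large-deviation estimate $\sup_{a\geq h}\dP_a^{\Td}(1\leq\vert\cZ_k^{h,+}\vert\leq k^2)\leq e^{-Ck}$: on the complementary event, the product over $y'\neq y$ is at most $q_{h,\delta}(h)^{k^2-1}$, which is super-exponentially small, and this dominates. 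This mechanism produces a decay that automatically beats the branching factor $(d-1)^k$ by exactly the right amount, with no need for $\beta_0$ to be large. A finite block size $k_0$ is chosen so Lemma~\ref{lem:Akhsmall} kicks in, and the final union bound is over the positions of the $\leq Ck$ putative exits rather than over raw rays, which makes the combinatorics work out with a small $C$. I would recommend re-reading Lemma~\ref{lem:Akhsmall} and the translation step~\eqref{eqn:Aqhtranslatedintoprobas}; they are the two ingredients your sketch is missing, and they are precisely what makes the supercritical regime argument work uniformly in $h$.

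Two minor remarks. First, your observation that the lemma's minimum must be read over $z$ at depth comparable to $k$ (otherwise $E(\circ,\delta_1)=0$ makes the event trivial) is a fair reading of the statement and matches how the paper actually uses it; the paper's proof indeed reduces to vertices on $\partial B_{\Td}(\circ,k)$. Second, the expected-size bound $\dE_a[\vert\cZ_j^{h,+}\vert]\leq C\chi_h(a)\lambda_h^j$ you invoke is correct, but as noted above it is precisely what forces the unattainable requirement $c_\alpha>\log\lambda_h$ in your version.
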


\noindent
We prove Proposition~\ref{lem:expomomentsdiscovery} by an annealed exploration of $\Chplus$ and the SRW $(X_k)_{k\geq 0}$ on it, by revealing the vertices of $\Chplus$ when $(X_k)$ visits them. We decompose the trajectory of $(X_k)$ into excursions between new height records. In the first step of the proof, we use the fact that each time $(X_k)$ reaches such a record at some vertex $x$, there is a probability bounded away from $0$ that $\cT_x$, the subtree from $x$ in $\Chplus$, is $\delta_0$-transient by Proposition~\ref{prop:transienceintro}. In the second step, Lemma~\ref{lem:grimmettkesten} helps to ensure that the distance between consecutive records has exponential moments. In the third step, we make sure that the random walk does not lose too much time in the finite bushes of $\Chplus$ (i.e.~the subtrees $\cT_x$ such that $\cT_x$ is finite, for $x\in \Chplus$). 

\begin{proof}[Proof of Proposition~\ref{lem:expomomentsdiscovery}]
Fix $a\geq h$. We decompose the trajectory of $(X_k)$ on $[0, \tau_1]$ as follows: if $X_k\neq \circ$ for all $k\geq 1$, $\tau_1=1$. Else, let $r_1:=\inf\{k\geq 1, \vert X_k\vert =0 \}$ be the time of the first return to the root, $m_1:=\sup\{\vert X_k\vert ,\, k\leq  r_1\}$ the largest height of the trajectory during this excursion, and $s_1:=\inf\{k\geq 1, \, \vert X_k\vert =1+m_1\}$ the first time that the walk reaches a higher point. 
\\
For $i\geq 1$, if $r_i,m_i$ and $s_i$ have been defined with $r_i<+\infty$, then set $r_{i+1}:=\inf\{k\geq s_i, \, \vert X_k\vert =\vert X_{s_i}\vert -1\}$ the first return below the record $\vert X_{s_i}\vert $, $m_{i+1}:=\sup\{\vert X_k\vert -\vert X_{s_i}\vert +1,\, k\leq  r_{i+1}\}$ the height of the excursion between $s_i$ and $r_{i+1}$ and $s_{i+1}:=\inf\{k\geq 1, \, \vert X_k\vert =\vert X_{s_i}\vert +m_{i+1}\}$ the first time that the walk reaches a new record after that excursion. If $(\vert X_k\vert )_{k\geq s_i}$ stays forever above $m_i$ (i.e.~$r_{i+1}=+\infty$), then $\tau_1=s_i$. Let $i_0:=\inf\{i\geq 1, \, s_i= \tau_1\}$.  
\\ 
\\
\textbf{Step 1:}
We claim that there exists $\epsilon >0$ (only depending on $d$ and $h$) such that for every $i\geq 1$, 
\begin{equation}\label{eqn:step1}
\sup_{a\geq h}\sP^{h,+,\infty}_a(i_0\geq i+1 \vert i_0\geq i)<1-\epsilon.
\end{equation}
For every $i,\ell\geq 1$, every rooted tree $T$ of height $\ell$, every vertex $y\in T$ of height $\ell$ and $b\geq h$, we have 
\begin{align*}
p:&=\sP^{h,+,\infty}_a(i_0=i\vert\,  B_{\Chplus}(\circ,\ell )=T,\, X_{s_i}=y,\, \phid(y)=b)
\\
&=\sP^{h,+,\infty}_a(\forall k\geq s_i, X_k \in \cT_y\vert\,  B_{\Chplus}(\circ,\ell )=T,\, X_{s_i}=y,\, \phid(y)=b)
\\
&\geq\delta\, \sP^{h,+,\infty}_a(\cT_y\text{ is $\delta$-transient}\vert\,  B_{\Chplus}(\circ,\ell )=T,\, X_{s_i}=y,\, \phid(y)=b),
\end{align*}
where $\cT_y$ is the subtree in $\Chplus$ from $y$. The second line follows from the strong Markov property for the SRW (as $s_i$ is a stopping time for $(X_k)$ w.r.t. to its quenched filtration, i.e. the canonical filtration of $(X_k)$ conditionally on the realization of $\phid$, and hence of $\Chplus$). The third line follows from Markov's inequality. Denote $\dP^{\Td}_{b,y}$ the law of $\phid$ conditionally on $\phid(y)=b$: we have 
\begin{align*}
p&\geq \delta\, \sP^{h,+}_a(\cT_{y}\text{ is $\delta$-transient} \vert\, B_{\Chplus}(\circ,\ell )=T,\,\, \phid(y)=b,\,\vert \Chplus\vert = +\infty)
\\
&\geq \delta\, \dP^{\Td}_{b,y}(\cT_{y}\text{ is $\delta$-transient} \vert\, B_{\Chplus}(\circ,\ell )=T,\,\vert \Chplus\vert = +\infty)
\\
&\geq   \delta\, \dP^{\Td}_{b,y}(\cT_{y}\text{ is $\delta$-transient} \vert\, B_{\Chplus}(\circ,\ell )=T)
\end{align*}
since  $\{\cT_{y}\text{ is $\delta$-transient}\} \cap  \{B_{\Chplus}(\circ,\ell )=T\}\subset \{\vert \Chplus\vert = +\infty)\} \cap \{B_{\Chplus}(\circ,\ell )=T\}$. Now, by Proposition~\ref{prop:domainMarkov}, conditionally on $\phid(y)$, $\{\cT_{y}\text{ is $\delta$-transient}\} $ and $ \{B_{\Chplus}(\circ,\ell )=T\} $ are independent. Hence taking $\delta =\delta_0/2$, we have
\[
p \geq  \delta\, \dP^{\Td}_{b,y}(\cT_{y}\text{ is $\delta$-transient})=\delta \dP^{\Td}_b(\Chplus\text{ is $\delta$-transient})\geq \delta \varepsilon
\] 
for some $\varepsilon>0$ depending on $\delta_0$ (which is itself a function of $d$ and $h$) by Proposition~\ref{prop:transienceintro}. Taking $\epsilon = \delta\varepsilon $ yields \eqref{eqn:step1}.
\\
\\
\textbf{Step 2:} We establish the existence of $\Cix  ,\Cx  >0$  such that for every $k\geq 1$,
\begin{equation}\label{eqn:step2}
\sup_{a\geq h}\sP^{h,+,\infty}_a(\vert X_{\tau_1}\vert \geq k) \leq \Cix  e^{-\Cx  k}.
\end{equation} 
Note that $\vert X_{\tau_1}\vert =m_1+\ldots +m_{i_0}+1$.
We start by showing that $m_i$ has exponential moments, uniformly in $i\geq 1$ and $a\geq h$, by applying Lemma~\ref{lem:grimmettkesten} at the subtree rooted at $X_{s_i}$. We then combine this with a bound on $i_0$ derived from Step 1.
\\
For $k\geq 0$, denote $\cT_k $ the subtree from $X_k$ in $\Chplus$. 
Remark that for all $x\in \Td^+$, conditionally on $\phid(x)$, $\phid$ on the subtree from $x$ in $\Td^+$ is distributed as $\phid$ on $\Td^+$ under $\dP^{\Td}_{\phid(x)}$. By Proposition~\ref{prop:domainMarkov}, we then have for $i\geq 1$ and for every set $\cA$ of rooted trees:
\begin{equation}\label{eqn:Tsievent}
\sP^{h,+}_a(\cT_{{s_i}}\in \cA)=\sum_{x\in \Td^+}\sP^{h,+}_a(X_{s_i}=x)\dE_{Z_{a,x}}[\dP^{\Td}_{Z_{a,x}}(\Chplus \in \cA)]\leq \sup_{b\geq h}\dP^{\Td}_b(\Chplus \in \cA),
\end{equation}
where $Z_{a,x}$ has the distribution of $\phid(x)$ under $\sP^{h,+}_a(\,\cdot\,\vert X_{s_i}=x) $ and $\dE_{Z_{a,x}}$ is the associated expectation. Since $q_h$ is non-increasing, we have for any event $\cE$ and $a\geq h$:
\begin{equation}\label{eqn:absolcontsPa}
\sP^{h,+,\infty}_a(\cE)\leq (1-q_h(a))^{-1}\sP^{h,+}_a(\cE)\leq (1-q_h(h))^{-1}\sP^{h,+}_a(\cE).
\end{equation}
Combining this with \eqref{eqn:Tsievent} and Lemma~\ref{lem:grimmettkesten}, with $\cE=\{\min_{z \in B_{\cT_{s_i}}(X_{s_i},k)} E(z,\delta_1) \leq \Cvi k\}$ and $\cA$ the set of rooted trees $T$ such that $\cE$ holds for $T=\cT_{s_i}$, we get
\[
\sP^{h,+,\infty}_a\left(\min_{z \in B_{\cT_{s_i}}(X_{s_i},k)} E(z,\delta_1) \leq \Cvi k \right) \leq \Cvii (1-q_h(h))^{-1} e^{-\Cviii k}.
\]
If $\cT_{s_i}$ satisfies $\min_{z \in B_{\cT_{s_i}}(X_{s_i},k)} E(z,\delta_1) \geq \Cvi k$, and if $X_n\in \cT_{s_i}\setminus  B_{\cT_{s_i}}(X_{s_i},k)$ for some $n \in [s_i, r_{i+1}]$, then with probability at least $1- (1-\delta_1)^{\Cvi k}$, $(X_j)_{j\geq n}$ never comes back to $X_{s_i}$, and $i_0=i$. Hence,
\begin{center}
 $\sP^{h,+,\infty}_a(m_i\geq k)\leq \Cvii (1-q_h(h))^{-1} e^{-\Cviii k} + (1-\delta_0)^{\Cvi k}$ for $i,k\geq 1$.
\end{center}
Remark that these bounds are uniform in $a$ and in the value of $\phid(X_{s_i})$. Moreover, conditionally on the value of $\phid(X_{s_i})$, $m_{i+1}$ is independent of $\{m_1, \ldots, m_i\}$. Therefore, under $\sP^{h,+,\infty}_a$, $m_1+\ldots +m_k$ is stochastically dominated by the sum of $k$ i.i.d. variables of some law $\mu$ such that if $Y\sim \mu$, $\dP(Y\geq j) \leq ce^{-c'j}$ for some positive constants $c,c'$ (independent of $a$) and every $j\geq 1$.
\\
Let $K\in (0, \dE[Y]^{-1})$. Then 
\begin{center}
$\sP^{h,+,\infty}_a(\vert X_{\tau_1}\vert \geq k) \leq \sP^{h,+,\infty}_a(i_0 \geq Kk) + \dP(Y_1+\ldots Y_{Kk}\geq k)$, 
\end{center}
where the $Y_i$'s are i.i.d. copies of $Y$. By \eqref{eqn:step1} for the first term of the RHS and the Chernov bound for the second term, if $c$ is large enough and $c'$ small enough, then for every $k\geq 1$ and $a\geq h$,
\[
\sP^{h,+,\infty}_a(\vert X_{\tau_1}\vert \geq k) \leq ce^{-c'k},
\]
and \eqref{eqn:step2} follows.

\noindent
\textbf{Step 3:} The goal of this step is to give a lower bound on the maximal height reached by $(X_n)_{n\leq k}$, showing that it does not lose to much time in finite subtrees of $\Chplus$. Precisely, we establish that if $\Cix  $ is large enough and $\Cx  $ small enough, then for all $a\geq h$ and $k\geq 1$,
\begin{equation}\label{eqn:step3}
\sP^{h,+,\infty}_a\left(\max_{n\leq k}\vert X_n\vert \geq k^{1/6}\right)=\sP^{h,+,\infty}_a(t_k\leq k)\geq 1-\Cix  e^{-\Cx  k^{1/6}},
\end{equation}

\noindent
where $t_k:=\min\{n\geq 1,\,\vert X_n\vert \geq k^{1/6}\}$. Note that $t_k$ is a stopping time. 
We decompose $\Chplus$ as a \textbf{skeleton} $\cS^{h,+}$, the subtree whose vertices are exactly those with an infinite offpsring in $\Chplus$, i.e.~its vertex set is $\{x\in \Chplus, \vert\cO_x \vert=+\infty\}$, to which are attached finite subtrees, called \textbf{bushes}. For instance, if $\Chplus$ is finite, $\cS^{h,+}=\emptyset$ and $\Chplus$ is one single bush. Then, one can decompose the trajectory of $(X_k)$ as a SRW on $\cS^{h,+}$, with excursions in the bushes. 
\\
We prove two things: first, if $(X^{\cS}_n)_{n\geq 0}$ is a SRW on $\cS^{h,+}$, then for some $c$ small enough and every $k$ large enough,
\begin{equation}\label{eqn:symmetricRWonN}
\sP^{h,+,\infty}_a\left(\max_{n\leq k^{1/2}} \vert X^{\cS}_n\vert  \leq  k^{1/6} \right) \leq e^{- ck^{1/6}}.
\end{equation}

\noindent
Second, we control the time lost by the SRW on $\Chplus$ in the bushes, by showing that for $k$ large enough,
\begin{equation}\label{eqn:bushesnotwastetime}
\sP^{h,+,\infty}_a((X_n)_{n\leq t_k}\text{ makes $\lfloor k^{1/2}/2\rfloor$ consecutive steps not on the edges of $\cS^{h,+}$}) \leq e^{- ck^{1/6}}.
\end{equation}
Suppose that these two estimates hold. If $t_k\geq k$, either $(X_n)_{n\leq t_k}$ makes at some point $k^{1/2}/2$ consecutive steps not on the edges of $\cS^{h,+}$, or $(X_n)_{n\leq t_k}$ makes at least $k^{1/2}$ steps on the edges on $\cS^{h,+}$ (not necessarily consecutive). Note that the trace of $(X_n)_{n\geq 0}$ on $\cS^{h,+}$ is distributed as $(X^{\cS}_n)_{n\geq 0}$. Thus, by \eqref{eqn:symmetricRWonN} and \eqref{eqn:bushesnotwastetime} respectively, each of these two alternatives has probability at most $e^{- ck^{1/6}}$. Therefore, for $k$ large enough,
\[
\sP^{h,+,\infty}_a(t_k\leq k)\leq 2e^{- ck^{1/6}},
\]
and \eqref{eqn:step3} follows. Hence, we are left with showing \eqref{eqn:symmetricRWonN} and \eqref{eqn:bushesnotwastetime}.
\\

\noindent
\textbf{Proof of \eqref{eqn:symmetricRWonN}.} Remark that $\Chplus$ is a.s. such that $(\vert X^{\cS}_n\vert )_{n\geq 0}$ dominates stochastically a SRW $(Y_n)_{n\geq 0}$ on $\mathbb{N}_0$ reflected at $0$. By Donsker's theorem, there exists a positive constant $c>0$ such that for $k$ large enough, for every $i\geq 0$, $\dP(Y_{\lfloor k^{1/3}\rfloor }\leq k^{1/6}\vert Y_0=i) \leq e^{-2c}.$ Applying the simple Markov property at times $n\lfloor k^{1/3}\rfloor$ for $n=1, 2, \ldots, \lfloor k^{1/6}\rfloor$ yields (\ref{eqn:symmetricRWonN}).
\\
\\
\textbf{Proof of \eqref{eqn:bushesnotwastetime}.} 
%
On the first $\lfloor k^{1/6}\rfloor $ generations of $\Chplus$, there are less than $d(d-1)^{k^{1/6}}$ vertices. By a union bound on these vertices and Proposition~\ref{prop:expomomentsCh}, if $C'>0$ is large enough, then for $k$ large enough, 
\begin{equation}\label{eqn:probaEnbushes}
\sP^{h,+,\infty}_a(E_k)=\dP^{\Td}_a(E_k)\geq 1-e^{-k^{1/6}},
\end{equation} 
where $E_k=\{\text{the largest bush of }B_{\Chplus}(\circ, \lfloor k^{1/6}\rfloor)\text{ has size at most }C'k^{1/6}\}$. 
\\
Let $B$ be a bush such that $\vert B \vert\leq C'k^{1/6}$, and $x_B$ its root (hence, $x_B$ has exactly one neighbour in $\cS^{h,+}$). By Theorem~1 of \cite{KahnLinialCovertime}, if $k$ is large enough (depending only on $C'$), then the expected hitting time of $x_B$ by a SRW started at an arbitrary vertex in $B$ is less than $\lfloor 2C'k^{1/3}\rfloor-4$. Hence by Markov's inequality, a SRW in $\Chplus$ starting at any vertex of $B$ has a probability at least $1/2$ to hit $x_B$ after at most $\lfloor 4C'n^{1/3}\rfloor-2$ steps. From $x_B$, the probability to reach $\cS^{h,+}$ at the next step and to stay in $\cS^{h,+}$ at the step after is at least $d^{-2}$, so that the probability that a SRW starting in $B$ goes through an edge of $\cS^{h,+}$ after at most $\lfloor 4C'n^{1/3}\rfloor$ steps is at least $d^{-2}/2$.
\\
Fix a realization of $\Chplus$ such that $E_k$ holds. Let $x\in B_{\Chplus}(\circ,\lfloor k^{1/6}\rfloor)\setminus \cS^{h,+}$ (if this set is empty, then a SRW starting at $\circ$ can not make even one step in a bush before $t_k$, so that we can discard this case). Start at SRW at $x$. 
By the Markov property applied at times $\lfloor 4C'k^{1/3}\rfloor i $ for $1\leq i\leq k^{1/2}/( 2\lfloor4C'k^{1/3}\rfloor)$, there exists $C>0$ (only depending on $C'$ and $d$) such that for $k$ large enough (uniformly in the realization of $\Chplus$), the probability that this SRW makes at least $k^{1/2}/2$ consecutive steps without crossing an edge of $\cS^{h,+}$ is less than $e^{-Ck^{1/6}}$. Therefore, by the Markov property again, writing
\[
p_n:=\bP^{\Chplus}_{\circ}(X_n\in B_{\Chplus}(\circ,\lfloor k^{1/6}\rfloor)\setminus \cS^{h,+} ,\, (X_j)_{n\leq j\leq n+\lfloor k^{1/2}/2\rfloor}\,\text{does not cross an edge of}\,\cS^{h,+})
\]
and
\[
F_k:=\{ (X_n)_{n\leq t_k}\text{ makes $\lfloor k^{1/2}/2\rfloor$ consecutive steps not on the edges of $\cS^{h,+}$}\},
\]
we get that
\begin{center}
$\bP^{\Chplus}_{\circ}(F_k)\leq \sum_{n\leq k}p_n \leq ke^{-Ck^{1/6}}.$
\end{center}
Hence, recalling~\eqref{eqn:probaEnbushes} and choosing $c\in (0,C)$, we have
\begin{align*}
&\sP^{h,+,\infty}_a(F_k)  
\leq \sP^{h,+,\infty}_a(E_k^c)+ ke^{-Ck^{1/6}} \leq e^{-k^{1/6}} +ke^{-Ck^{1/6}} \leq e^{-ck^{1/6}}.
\end{align*} 
This shows \eqref{eqn:bushesnotwastetime}, and Step 3 is completed.
\\
\\
\textbf{Conclusion:} We now combine Steps 2 and 3 to finish the proof: by \eqref{eqn:step2} (with $\lfloor k^{1/6}\rfloor$ instead of $k$) and \eqref{eqn:step3}, we have for $k\geq 1$:
\[
\sP^{h,+,\infty}_a(\vert X_{\tau_1}\vert \leq \max_{n\leq k}\vert X_n\vert )\geq 1-2\Cix  \exp^{-\Cx  k^{1/6}/2}.
\]
On this event, $\tau_1\leq k$, so that we can take $\Ci=2\Cix  $ and $\Cii =\Cx  /2$.
\end{proof}


\noindent
Denote $\cT_{\tau_i}$ the subtree from $X_{\tau_i}$ in $\Chplus$. 
\begin{remark}\label{rem:renewalabsolutcont}
The law of $(\cT_{\tau_i},\phid\vert_{\cT_{\tau_i}},(X_k)_{k\geq \tau_i})$ conditionally on $\phid(X_{\tau_i}=a)$ is the law of $(\Chplus,\phid\vert_{\Chplus},(X_k)_{k\geq 0})$ under 
\begin{equation}\label{eqn:defPrenewal}
\sPr_a:=\sP^{h,+,\infty}_a(\,\cdot \,\vert \,\forall k\geq 0,\, X_k\neq \oc )
\end{equation}
for $i\geq 1$. 
Also, by Proposition~\ref{prop:transienceintro}, we have 
\begin{equation}\label{eqn:minprobatransience}
 M_h:=\min_{a\geq h}\sP^{h,+,\infty}_a(\forall k\geq 0,\, X_k\neq \oc)>0
\end{equation}
\noindent
so that the conditioning is uniformly non-degenerate. 
\end{remark}
\noindent
Hence we get the following upgraded version of Proposition~\ref{lem:expomomentsdiscovery}:

\begin{proposition}\label{prop:expomomentsrenewal}
If $\Ci$ is large enough and $\Cii $ small enough, then for every $a,b\geq h$, $i\geq 0$ and $k\geq 1$, 
$$\max(\sP^{h,+,\infty}_a(\tau_{i+1}-\tau_i\geq k\,\vert \, \phid(X_{\tau_i})=b),\,\sPr_a(\tau_{i+1}-\tau_i\geq k\,\vert \, \phid(X_{\tau_i})=b)\,)\leq \Ci e^{-\Cii k^{1/6}}.$$

\noindent
In addition under either $\sP^{h,+,\infty}_a$ or $\sPr_a$, and conditionally on the value of $\phid(X_{\tau_i})$, the triplet $(\cT_{\tau_i},\phid\vert_{\cT_{\tau_i}},(X_k)_{k\geq \tau_i})$ is independent of the triplet $(\Chplus \setminus \cT_{\tau_i} , \phid\vert_{\Chplus\setminus\cT_{\tau_i}}, (X_k)_{0\leq k \leq \tau_i})$.
\end{proposition}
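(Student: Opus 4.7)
The plan is to build on Remark~\ref{rem:renewalabsolutcont}, which already articulates the essential structural fact: after a renewal time, the environment-and-walk triplet has the law of $\sPr_b$ with $b=\phid(X_{\tau_i})$. Making this rigorous and then combining it with Proposition~\ref{lem:expomomentsdiscovery} will yield the bound. First I would check that $\tau_i$ is a stopping time for the joint filtration generated by the walk and the values of $\phid$ on the vertices it has visited; this holds because the condition $\{\tau_i\leq n\}$ is determined by $(X_k)_{k\leq n}$ and $\phid\vert_{B_{\Chplus}(\circ,\max_{k\leq n}\vert X_k\vert)}$. The strong Markov property of the quenched SRW at $\tau_i$ then decouples $(X_k)_{k\leq \tau_i}$ from $(X_k)_{k\geq \tau_i}$ conditionally on $X_{\tau_i}$ and the environment.

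Next I would invoke the recursive construction of the GFF (Proposition~\ref{prop:recursivegfftrees}): conditionally on $\phid(X_{\tau_i})$, the field $\phid\vert_{\cO_{X_{\tau_i}}}$ is independent of $\phid\vert_{\Td^+\setminus\cO_{X_{\tau_i}}}$ and, after rerooting $\cO_{X_{\tau_i}}\cup\{X_{\tau_i}\}$ at $X_{\tau_i}$ (which, for $i\geq 1$, has exactly $d-1$ children in $\Td^+$, matching the root of $\Td^+$), has the law of $\phid$ on $\Td^+$ under $\dP^{\Td}_{\phid(X_{\tau_i})}$. The renewal condition that $(X_k)_{k\geq\tau_i}$ never crosses back the edge $(X_{\tau_i-1},X_{\tau_i})$ translates under this rerooting into the event $\{X_k\neq \oc\text{ for all }k\}$ used in the definition of $\sPr_b$, with $X_{\tau_i-1}$ playing the role of $\oc$. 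Finally, since SRW on a finite tree is recurrent, the walk staying in $\cT_{\tau_i}$ forever forces $\cT_{\tau_i}$ to be a.s.~infinite, matching the $\{\vert\Chplus\vert=\infty\}$ conditioning inside $\sP^{h,+,\infty}$. Combined, these give that conditionally on $\phid(X_{\tau_i})=b$ (under either ambient measure, since the extra conditioning in $\sPr_a$ only restricts the walk before $\tau_1\leq \tau_i$ and is automatic thereafter), the future triplet has the law of $(\Chplus,\phid\vert_{\Chplus},(X_k)_{k\geq 0})$ under $\sPr_b$, and is independent of the past triplet. This delivers the second (independence) assertion of the proposition.

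From the identification, $\tau_{i+1}-\tau_i$ conditional on $\phid(X_{\tau_i})=b$ has the law of $\tau_1$ under $\sPr_b$. Using the uniform lower bound $M_h>0$ from~\eqref{eqn:minprobatransience}, for any event $\cE$,
\begin{equation*}
\sPr_b(\cE)=\frac{\sP^{h,+,\infty}_b(\cE\cap\{X_k\neq\oc\;\forall k\})}{\sP^{h,+,\infty}_b(X_k\neq\oc\;\forall k)}\leq M_h^{-1}\,\sP^{h,+,\infty}_b(\cE).
\end{equation*}
Taking $\cE=\{\tau_1\geq k\}$ and applying Proposition~\ref{lem:expomomentsdiscovery} yields $\sPr_b(\tau_1\geq k)\leq M_h^{-1}\Ci e^{-\Cii k^{1/6}}$ uniformly in $b\geq h$; after absorbing $M_h^{-1}$ into $\Ci$ we obtain the asserted bound for both $\sP^{h,+,\infty}_a$ and $\sPr_a$. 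The edge case $i=0$, with $\tau_0:=0$ and $b=a$, is handled directly: Proposition~\ref{lem:expomomentsdiscovery} for $\sP^{h,+,\infty}_a$, and the same absolute continuity inequality for $\sPr_a$.

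The main obstacle is the careful bookkeeping of the rerooting at $X_{\tau_i}$ in the identification step: matching the degree of $X_{\tau_i}$ in the ambient graph (its $d-1$ children in $\Td^+$ plus its parent $X_{\tau_i-1}$) with the degree of $\circ$ in $\Chplus\cup\{\oc\}$, and composing the three conditionings (renewal, infiniteness of $\cT_{\tau_i}$, and no-visit to $\oc$) consistently under both $\sP^{h,+,\infty}_a$ and $\sPr_a$. Once this bookkeeping is in place, both the independence statement and the tail bound are direct consequences of the strong Markov property, the recursive construction of the GFF, and Proposition~\ref{lem:expomomentsdiscovery}.
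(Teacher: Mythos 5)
The overall plan matches the paper's: identify, via Remark~\ref{rem:renewalabsolutcont}, the conditional law of the post-renewal triplet with $\sPr_{\phid(X_{\tau_i})}$, and then transfer the bound of Proposition~\ref{lem:expomomentsdiscovery} using the uniform absolute-continuity constant $M_h$ of~\eqref{eqn:minprobatransience}. Steps involving the Domain Markov property, the degree-matching of $X_{\tau_i}$ with $\circ$ in $\Chplus\cup\{\oc\}$, the $\{|\Chplus|=\infty\}$ conditioning, and the $M_h^{-1}$ comparison $\sPr_b(\cE)\leq M_h^{-1}\sP^{h,+,\infty}_b(\cE)$ are all correct and in line with the paper.

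However, your opening step is wrong: $\tau_i$ is \emph{not} a stopping time. You claim $\{\tau_i\leq n\}$ is determined by $(X_k)_{k\leq n}$ and $\phid$ on the ball visited so far, but a renewal time $R$ is defined by the condition $\max_{k\leq R-1}|X_k|<\min_{k\geq R}|X_k|$, which depends on the walk for \emph{all} $k\geq R$, in particular for $k>n$. Knowing the environment near $X_R$ does not help: it only gives the quenched probability of never returning, not whether the event occurs. So the strong Markov property cannot be applied directly at $\tau_i$, and your justification for the decoupling of $(X_k)_{k\leq\tau_i}$ from $(X_k)_{k\geq\tau_i}$ is invalid as stated.

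The correct route is the Sznitman--Zerner style regeneration argument, already implicit in the paper's proof of Proposition~\ref{lem:expomomentsdiscovery}: introduce the genuine stopping times $s_1<s_2<\ldots$ at which the walk reaches a new height record, note that $\tau_1=s_{i_0}$ where $i_0$ is the first index for which the ``never return below $|X_{s_{i_0}}|$'' event $A_{i_0}$ occurs, and observe that $A_i$ depends only on $(X_k)_{k\geq s_i}$ and (by the Domain Markov property) on the environment in the subtree from $X_{s_i}$, while the complementary events $A_j^c$, $j<i$, together with $\{r_1<\infty,\ldots,r_i<\infty\}$ are measurable w.r.t.\ the walk up to $s_i$ and the environment off that subtree. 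It is this factorization, not a naive strong Markov property, that lets you sum over the value of $i_0$ and identify the law of the future with $\sPr_{\phid(X_{\tau_1})}$. Once that identification is made rigorous, the rest of your argument (rerooting, $\{X_k\neq\oc\}$, $M_h^{-1}$ comparison, Proposition~\ref{lem:expomomentsdiscovery}, the $i=0$ edge case) goes through as written.
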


\section{Existence of the speed}\label{sec:speed}
We now turn to the proof of Theorem~\ref{thm:LLN}. We first show Proposition~\ref{prop:CLThtau} in Section~\ref{subsec:regularity}, then a pointwise LLN and CLT (Proposition~\ref{prop:LLNandCLT}) in Section~\ref{subsec:pointwise}, and finally Theorem~\ref{thm:LLN} in Section~\ref{subsec:proofthmmain}. All proofs will be done under $\sPr_h$, as it turns out that they can be adapted to the other annealed laws of interest ($\sP^{h,\infty}$, $\sP^{h,+,\infty}$, $\sP^{h,\infty}_a$ and $\sP^{h,+,\infty}_a$ for $a\geq h$):

\begin{remarks}\label{rem:choiceprobaspace}
We have the following.
\begin{itemize}
\item The proofs of Propositions~\ref{prop:CLThtau} and~\ref{prop:LLNandCLT} as well as Theorem~\ref{thm:LLN} adapt hold under $\sPr_\mu$ for any probability distribution $\mu$ on $[h,+\infty)$, since the results we use from~\cite{MeynTweedie} are valid for any such distribution $\mu$, and since the bound of Proposition~\ref{prop:expomomentsrenewal} are uniform in the value of the GFF at a renewal time.  
\item The first renewal interval is irrelevant: let $\sP$ be an annealed distribution $\sP$ on $\Chplus$ and a SRW on $\Chplus$ such that there exists an a.s.~finite time $\tau\in \dN$ so that $(X_k)_{k\geq \tau}$ is distributed as $(X_k)_{k\geq 0}$ under $\sPr_\mu$ for some ad hoc distribution $\mu$ (that can depend on $\sP$). Applying Theorem~\ref{thm:LLN} to $(\vert X_k\vert -\vert X_{\tau}\vert)_{k\geq \tau}$ and using that $\tau$ (and $\vert X_\tau\vert$) are finite (and hence are tight) yields Theorem~\ref{thm:LLN} for $(X_k)_{k\geq 0}$. We then apply this to $\sP^{h,+,\infty}_a$ for any $a\geq h$, and to $\sP=\sP^{h,+,\infty}$ (since under those distributions, we can take $\tau=\tau_1$).
\item The adaptation from $\Chplus$ to $\Ch$ is immediate once one notices that the proof of Proposition~\ref{prop:transienceintro}  holds for $\Ch$ instead of $\Chplus$, and that the results of Section~\ref{sec:renewal} hold on $\Ch$ as well, as only the law of the first renewal interval changes.
\end{itemize}
\end{remarks}

\subsection{Ergodicity of renewal intervals}\label{subsec:regularity}

\noindent
In this section, we prove Proposition~\ref{prop:CLThtau}. 
%
Recall the definition of $Y_i$ from~\eqref{eqn:defYiMC}. 
By Remark~\ref{rem:renewalabsolutcont}, under $\sPr_h$, $(Y_i)_{i\geq 0}$ is a Markov chain on the state space $\cX:=[h,+\infty)\times \cM$, where $\cM$ is the following countable set. 
\\
For $j\geq 1$, let $\cT_{d}$ be the set of finite rooted trees with each vertex having degree at most $d$. For $T\in \cT_d$, let $\cW_T$ be the set of finite nearest neighbour walks on $T$, starting at the root of $T$ and ending on a vertex of $T$ of maximal height. Let
\begin{equation}\label{eqn:defMset}
\cM:=\{(T,W):\, \,  T\in \cT_d, \, W\in \cW_T\}.
\end{equation}
\noindent
We equip $\cX$ with the sigma-field $\cB(\cX)$ generated by the Borel sets on $[h,+\infty)$ and the power set of $\cM$.
For $Y=(b,T,W)\in \cX$, denote $\varphi(Y):=b$, $\h(Y)$ the height of $T$  and $\tau(Y)$ the length of $W$. Remark~\ref{rem:renewalabsolutcont} also implies that the distribution of $Y_{i+1}$ conditionally on $Y_i$ does not depend on $i\geq 0$.  Let $Q$ be the transition kernel of the Markov chain $(Y_i)_{i\geq 0}$. Moreover, we have that for any $Y\in \cX$, 
\begin{equation}\label{eqn:QYsP+}
\text{the probability measure}\text{ $Q(Y,\cdot)$ is the distribution of $(\phid(X_{\tau_1}), T^{(\tau)}_0,W^{(\tau)}_0  )$ under $\sPr_{\varphi(Y)}$},
\end{equation}
where we recall the definition of $\sPr$ at~\eqref{eqn:defPrenewal}. 
Since this measure only depends on the first coordinate of $Y$, we can define $Q(b,\cdot)$ as $Q((b,T,W),\cdot)$ for any $b\geq h$ and an arbitrary $(T,W)\in \cM$.  Denote $\dP^{Q}$ the probability associated to $(Y_i)_{i\geq 0}$. 
\\
We show that $(Y_i)_{i\geq 0}$ is positive Harris recurrent, and satisfies a drift condition w.r.t.~a potential function that dominates $\h(Y)$ and $\tau(Y)$ for $Y\in \cX$. Theorem 17.0.1 in~\cite{MeynTweedie} then implies Proposition~\ref{prop:CLThtau}.
%

\noindent
We give a short proof of this proposition, which relies on two technical Lemmas that we state and prove below. Lemma~\ref{lem:reachability} essentially gives tightness properties on the sequence $(\varphi(Y_i))_{i\geq 0}$; in particular, it will visit infinitely many times every compact interval of $[h,+\infty)$. Lemma~\ref{lem:Vpotential} states that $(Y_i)_{i\geq 0}$ satisfies a drift condition w.r.t.~the potential function $V$ defined at~\eqref{eqn:Vpotentialdef}.
%

\begin{proof}[Proof of Proposition~\ref{prop:CLThtau}.] 
By Theorem 17.0.1 of~\cite{MeynTweedie}, it is enough to show that $(Y_i)_{i\geq 0}$ is positive Harris recurrent, and $V$-uniformly ergodic to obtain~\eqref{eqn:CLThandtau} and~\eqref{eqn:CLThandtauX}. 
\\
\textbf{Positive Harris recurrence.} By Lemma~\ref{lem:Vpotential} and Theorem~1.2 in~\cite{HairerMattingly}, $(Y_i)_{i\geq 0}$ has a unique invariant measure, that we denote $\pi$. It remains to show that the chain is Harris recurrent. By (9.2) in~\cite{MeynTweedie}, this amounts to show that for a maximal irreducibility measure $\psi$, every $B\subseteq \cX$ such that $\psi(B)>0$ is Harris recurrent, that is 
\begin{equation}\label{eqn:Harrisrecurrentsets}
\dP^Q(\vert\{ i\geq 1,\, Y_i\in B \}\vert =+\infty, \vert\, Y_0=Y)=1
\end{equation}
for every $Y\in B$. We proceed in three steps: first, we find an irreducibility measure $\phi$ for $(Y_i)_{i\geq 0}$, with a finite total mass. Second, we construct a maximal irreducibility measure $\psi$ from $\phi$, via Proposition~4.2.2 of~\cite{MeynTweedie}, and prove that $\psi(B)>0$ only if $B$ contains a subset of $\cX$ of the form $I\times \mathfrak{m}$, where $I$ is a Borel set of $[h,+\infty)$ of positive measure and $\mathfrak{m}\in \cM$. Third, we prove that~\eqref{eqn:Harrisrecurrentsets} holds for any such set $B$.
\\
\textbf{First step.} We define $\phi$ on $\cX$ as follows. Let $\mathfrak{m}_1, \mathfrak{m}_2, \ldots$ be an enumeration of the elements of $\cM$, in an arbitrary order. Let $\phi$ be the unique measure such that for every Borel set $I$ of $[h,+\infty)$ and $j\geq 1$, let $\phi(I\times \mathfrak{m}_j)=2^{-j}\int_I (1+x^2)^{-1}dx$. We now show that this measure is irreducible, that is, for every $B\in \cB(\cX)$ such that $\phi(B)>0$ and every $Y\in \cX$, $\dP^Q(\exists i\geq 1,\,Y_i\in B \,\vert \, Y_1=Y)>0$. 
\\
Let $Y\in \cX$ and let $B$ be such that $\phi(B)>0$. By construction of $\phi$, and since $\cM$ is countable, there exists $j\geq 1$ and $I\subset [h,+\infty)$ of positive Lebesgue measure such that $I\times \{\mathfrak{m}_j\}\subseteq B$. By~\eqref{eqn:reachabilitypositive} and~\eqref{eqn:reachabilitycompact} for an arbitrary $K>0$ and $(T,W)=\mathfrak{m}_j$, we have indeed
\begin{equation}\label{eqn:phiirred}
\dP^Q(\exists i\geq 1,\,Y_i\in B \,\vert \, Y_0=Y)\geq \dP^Q(\exists i\geq 1,\,Y_i\in I\times \{ \mathfrak{m}_j\} \,\vert \, Y_0=Y)>0.
\end{equation}

\noindent
\textbf{Second step.} By Proposition~4.2.2(iv) of ~\cite{MeynTweedie}, since $\phi(\cX)<+\infty$, the measure $\psi$ defined by 
\[
\psi(B)=\sum_{k=0}^{+\infty}2^{-k-1}\int_{\cX}Q^k(Y,B)\phi(dY)
\]
is a maximal irreducibility measure (i.e.~an irreducibility measure such any other irreducibility measure is absolutely continuous w.r.t.~$\psi$). If $I\subseteq [h,+\infty)$ has Lebesgue measure zero, then by~\eqref{eqn:reachabilityzero}, for every $Y\in \cX$, $Q(Y,I\times \cM)=0$. By the chain rule, one extends this easily to $Q^k(Y,I\times \cM)=0$ for all $k\geq 1$. Integrating $Y$ w.r.t.~$\phi$ and summing over $k$, we obtain $\psi(I\times \cM)=0$. 
\\
\textbf{Third step.} Let $B\subseteq \cB(\cX)$ such that $\psi(B)>0$. We have just shown that there must exist $I\subseteq [h,+\infty)$ of positive Lebesgue measure and $\mathfrak{m}\in \cM$ such that $I\times \{\mathfrak{m}\}\subseteq B$. We only have to show that for any $Y\in \cX$, 
\begin{equation}\label{eqn:Harrisrecurrentsetproduct}
\dP^Q(\vert\{ i\geq 1,\, Y_i\in I\times\{\mathfrak{m}\} \}\vert =+\infty, \vert\, Y_0=Y)=1.
\end{equation}
Fix now $Y\in \cX$, and $K=K'>0$. By~\eqref{eqn:reachabilitycompact}, if $Y_0=Y$, there exist $\dP^Q$-a.s.~infinitely many $i$'s such that $\varphi(Y_i)\leq h+K$. By~\eqref{eqn:reachabilitypositive}, there exists $K''>0$ such that for every $i\geq 1$ and $a\in [h,h+K]$, $\dP^Q(Y_{i+1}\in I\times \{\mathfrak{m}\}\,\vert \, \varphi(Y_i) =a)>K''$. Together with the strong Markov property, this establishes~\eqref{eqn:Harrisrecurrentsetproduct}. Hence, we have shown that $(Y_i)_{i\geq 0}$ is positive Harris recurrent.
\\
\\
\textbf{$V$-uniform ergodicity.} We have shown that the chain $(Y_i)_{i\geq 0}$ is $\psi$-irreducible. By~\eqref{eqn:doeblincondition}, $[h,h+\Cxi ]\times \cM$ is a petite set (see §5.5.2 in~\cite{MeynTweedie} for a definition), and by~\eqref{eqn:driftcondition}, the condition (V4) defined at (15.28) holds with $\beta=1/3$, $C=[h,h+\Cxi ]\times \cM$ and $b=\Cxi $. By Theorem 16.0.1(iv) of~\cite{MeynTweedie}, this shows the $V$-uniform ergodicity. This concludes the proof.
\end{proof}

\begin{lemma}\label{lem:reachability}
Let $I$ be a Borel set of $[h,+\infty)$. If $\text{Leb}(I)>0$ where $\text{Leb}$ denotes the Lebesgue measure, then for every $(T,W)\in \cM$ and every $K>0$, 
\begin{equation}\label{eqn:reachabilitypositive}
\inf_{h\leq a\leq h+K}Q(a, (I,T,W))>0.
\end{equation}
Else, if $\text{Leb}(I)=0$, then for all $a\geq h$:
\begin{equation}\label{eqn:reachabilityzero}
Q(a, I\times \cM)=0. 
\end{equation}
Moreover, for all $K'>0$ and $Y\in \cX$, we have
\begin{equation}\label{eqn:reachabilitycompact}
\dP^Q( \vert \{i\geq 1, \, \varphi(Y_i)\leq h+K'  \}\vert =+\infty \,\vert\, Y_0=Y) =1. 
\end{equation}
\end{lemma}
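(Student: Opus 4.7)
For \eqref{eqn:reachabilityzero}, I decompose by the random vertex $X_{\tau_1}$: since $\Td^+\setminus\{\circ\}$ is countable, it suffices to show that $\sPr_a(\phid(v)\in I)=0$ for each fixed $v$ whenever $\text{Leb}(I)=0$. The marginal of $\phid(v)$ under $\dP^{\Td}_a$ is Gaussian, hence absolutely continuous with respect to Lebesgue measure. Since $\sPr_a$ is absolutely continuous with respect to $\dP^{\Td}_a$ (with uniformly bounded Radon--Nikodym derivative, by combining \eqref{eqn:minprobatransience} and \eqref{eqn:absolcontsPa}), the same absolute continuity transfers to $\sPr_a$, and \eqref{eqn:reachabilityzero} follows.

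For \eqref{eqn:reachabilitypositive}, fix $(T,W)\in \cM$ with $T$ of height $\ell$ and endpoint $v^*$ of $W$, a Borel set $I\subseteq [h,+\infty)$ with $\text{Leb}(I)>0$, and $K>0$. The plan is to construct an explicit event realizing the prescribed renewal data, whose probability is bounded below uniformly in $a\in[h,h+K]$. Conditionally on $\phid(\circ)=a$, the finite-dimensional vector $(\phid(z))_{z\in B_{\Td^+}(\circ,\ell)}$ is jointly Gaussian with a continuous positive density on the ambient Euclidean space. I require: (a) $\phid(z)\geq h$ for $z\in T$ and $\phid(z)<h$ for every $z\in B_{\Td^+}(\circ,\ell)\setminus T$ whose parent lies in $T$, so that $\Chplus\cap B_{\Td^+}(\circ,\ell)=T$; and (b) $\phid(v^*)\in I$. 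The joint Gaussian probability of $(a)\cap(b)$ is positive and depends continuously on $a$, hence is bounded below uniformly on $[h,h+K]$. On this event, the SRW follows $W$ with quenched probability at least $d^{-|W|}$; by Proposition~\ref{prop:domainMarkov}, the subtree from $v^*$ in $\Chplus$ is, conditionally on $\phid(v^*)$ and on the revealed configuration, distributed as $\Chplus$ under $\dP^{\Td}_{\phid(v^*)}$, and Proposition~\ref{prop:transienceintro} then gives a uniform lower bound on the probability that the SRW stays in that subtree forever. This simultaneously guarantees that $v^*$ is the first renewal time (so that the renewal datum equals $(T,W)$ with $\phid(X_{\tau_1})\in I$) and that the walk never visits $\oc$. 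The normalization in $\sPr_a$ is uniformly non-degenerate by \eqref{eqn:minprobatransience}, concluding the proof.

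For \eqref{eqn:reachabilitycompact}, I exploit the recursive Gaussian structure to obtain a Foster--Lyapunov drift for $(\varphi(Y_i))_{i\geq 0}$. By Proposition~\ref{prop:recursivegfftrees}, for any vertex $v$ at height $\ell\geq 1$ in $\Td^+$, the conditional law of $\phid(v)$ given $\phid(\circ)=a$ is Gaussian with mean $a/(d-1)^\ell$ and variance at most $d/(d-2)$ uniformly in $\ell$. Combining this with $|X_{\tau_1}|\geq 1$ and the stretched-exponential bound on $|X_{\tau_1}|$ from Proposition~\ref{prop:expomomentsrenewal}, I derive a geometric drift of the form $\sEr_a[\varphi(Y_1)^2]\leq \kappa\,a^2+C$ with $\kappa\in(0,1)$ and $C>0$ independent of $a\geq h$. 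Standard Markov chain recurrence theory (e.g.~Theorem~9.1.8 of \cite{MeynTweedie}, or a direct supermartingale argument on $\varphi(Y_i)^2$ above a suitable threshold) then yields almost-sure infinite returns of $(\varphi(Y_i))_{i\geq 0}$ to some sublevel set $\{Y:\varphi(Y)\leq M\}$. For $K'$ with $h+K'\geq M$ this gives \eqref{eqn:reachabilitycompact} directly; for smaller $K'$ one combines this with \eqref{eqn:reachabilitypositive} (applied to $I=[h,h+K']$ and an arbitrary fixed $(T,W)\in\cM$) and the strong Markov property to jump from $\{\varphi\leq M\}$ into $[h,h+K']$ with probability bounded below, and concludes by Borel--Cantelli.

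The main technical hurdle is step \eqref{eqn:reachabilitypositive}: one must coordinate the Gaussian density lower bound on $B_{\Td^+}(\circ,\ell)$, the step-by-step SRW lower bound for following $W$, the uniform transience estimate of Proposition~\ref{prop:transienceintro} for the subtree from $v^*$, and the non-degeneracy of the conditionings defining $\sPr_a$, all while preserving uniformity over the compact window $a\in[h,h+K]$.
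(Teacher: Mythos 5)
Your proposal is correct and follows essentially the same route as the paper for \eqref{eqn:reachabilityzero} and \eqref{eqn:reachabilitypositive}: the null-set argument via absolute continuity of $\sPr_a$ w.r.t.~$\dP^{\Td}_a$ (using \eqref{eqn:minprobatransience} and \eqref{eqn:absolcontsPa}), and the positivity argument via a joint Gaussian density lower bound on $B_{\Td^+}(\circ,\h(T))$, the SRW path probability $d^{-|W|}$, and the uniform transience estimate of Proposition~\ref{prop:transienceintro} applied to the subtree from the endpoint of $W$, with independence supplied by Proposition~\ref{prop:domainMarkov}. This is the paper's proof in all essentials.

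For \eqref{eqn:reachabilitycompact} your packaging differs slightly from the paper's. The paper isolates a separate Lemma~\ref{lem:excursionsaboveK}, which stochastically dominates $\phid(X_{\tau_1})-a$ under $\sPr_a$ by $\Gamma' + a_+/(d-1) - a$ for a fixed law $\Gamma'$ with exponential moments, and then concludes via an additive Chernoff bound on the sum of these increments. You instead derive a quadratic Lyapunov drift $\sEr_a[\varphi(Y_1)^2]\leq\kappa a^2+C$ with $\kappa\in(0,1)$ and invoke Foster--Lyapunov recurrence; this is equivalent in effect, and the constant $\kappa$ can indeed be taken slightly above $1/(d-1)^2<1$. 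One caveat worth making explicit: you derive the drift by combining the marginal law of $\phid(v)$ under $\dP^{\Td}_a$ with the stretched-exponential tail of $|X_{\tau_1}|$, but $X_{\tau_1}$ is a GFF-dependent random vertex constrained to lie in $\Chplus$, so the conditional law of $\phid(X_{\tau_1})$ given $\{X_{\tau_1}=v\}$ is not the marginal of $\phid(v)$ — it is biased upward. The paper's Lemma~\ref{lem:excursionsaboveK} circumvents this by bounding $\max_{x\in B_{\Td^+}(\circ,m)}\phid(x)$ under $\dP^{\Td}_a$ via a union bound over the $O(d^m)$ vertices, which dominates $\phid(X_{\tau_1})$ on the event $\{|X_{\tau_1}|\leq m\}$ whatever the identity of $X_{\tau_1}$; your sketch should include this step to be airtight. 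With that filled in, your argument is a valid, and somewhat more modular, alternative finish.
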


\begin{proof} \textbf{Proof of~\eqref{eqn:reachabilitypositive}.} Let $I\subseteq [h,+\infty)$ such that $\text{Leb}(I)>0$, $(T,W)\in \cM$ and  $K>0$. Note that for all $a\geq h$, we have 
\begin{align*}
Q(x, (I,T,W) )& \geq  \sPr_a(\{\phid(X_{\tau_{1}})\in I\}\cap \{B_{\Chplus}(\circ,\vert X_{\tau_1}\vert ) =T\}\cap \{(X_0,\ldots, X_{\tau_1}) =W\} )
\\
&\geq  \sP^{h,+,\infty}_a(\{\phid(X_{\tau_{1}})\in I\}\cap\{B_{\Chplus}(\circ,\vert X_{\tau_1}\vert ) =T\}\cap \{(X_0,\ldots, X_{\tau_1}) =W\}  ).
\end{align*}
Hence, it is enough to show that
\begin{equation}\label{eqn:reachability}
\inf_{a\in [h,h+K]}\sP^{h,+,\infty}_a(\{\phid(X_{\tau_{1}})\in I\}\cap \{B_{\Chplus}(\circ,\vert X_{\tau_1}\vert  )=T\}\cap \{(X_0,\ldots, X_{\tau_1}) =W\}  ) >0.
\end{equation}
Let $\vert W\vert$ be the length of $W$, and write $W=(x_0, \ldots, x_{\vert W\vert -1})$ with $x_0=\circ$. Let $K'>0$ be such that $\text{Leb}(I\cap [h,K'])>0$. Let $v$ be an arbitrary neighbour of $\circ$, and write $\mathcal{I}=\{[h,h+K],[h-K,h), I\cap [h,K']  \}$ By Proposition~\ref{prop:recursivegfftrees}, we have
\[
r_{K,K'}:=\min_{(I_1,I_2)\in \mathcal{I}^2} \inf_{a\in I_1}\dP^{\Td}_a(\phid(v)\in I_2),
\]
and thus
\begin{equation}\label{eqn:reachabilitypositivetreeT}
\inf_{a\in [h,h+K]} \dP^{\Td}_a(\{\phid(x_{\vert W\vert -1})\in I\}\cap\{B_{\Chplus}(\circ,\vert X_{\tau_1}\vert  =T\})\geq r_{K,K'}^{\vert B_{\Td}(\circ,\h(T))\vert}\geq r_{K,K'}^{d^{\h(T)+1}}.
 \end{equation}
Let $\delta_0$ be as in the proof of Proposition~\ref{prop:transienceintro}. In particular, we have for all $a\geq h$: $q_{h,\delta_0}(a)= \dP^{\Td}_a(\Chplus\text{ is $\delta_0$-transient} )\geq q_{h,\delta_0}(a)$. Letting 
\begin{center}
$\cE:= \{\phid(x_{\vert W\vert -1})\in I\}\cap\{B_{\Chplus}(\circ,\h(T) =T\}\cap \{ \cT_{x_{\vert W\vert -1}}\text{ is $\delta_0$-transient}\}$
\end{center}
where $\cT_{x_{\vert W\vert -1}}$ is the subtree in $\Chplus$ from $x_{\vert W\vert -1}$, we have by~\eqref{eqn:reachabilitypositivetreeT} and Proposition~\ref{prop:domainMarkov}: 
\[
\inf_{a\in [h,h+K]} \dP^{\Td}_a(\cE)\geq  r_{K,K'}^{d^{\h(T)+1}}q_{h,\delta_0}(h).
\]
Finally, we obtain
\begin{align*}
&\inf_{a\in [h,h+K]}\sP^{h,+,\infty}_a(\{\phid(X_{\tau_{1}})\in I\}\cap \{B_{\Chplus}(\circ,\vert X_{\tau_1}\vert  =T\}\cap \{(X_0,\ldots, X_{\tau_1}) =W\}  )
\\
& \geq \dP^{\Td}_a(\cE)\,\times\,\sP^{h,a}(\{(X_0, \ldots, X_{\vert W\vert -1})=W\}\cap\{\forall k\geq \vert W\vert, X_k\in  \cT_{x_{\vert W\vert -1}}\} \,\vert \, \cE)
\\
&\geq  r_{K,K'}^{d^{\h(T)+1}}q_{h,\delta_0}(h) d^{-\vert W\vert}\delta_0 >0. 
\end{align*}
This concludes the proof of~\eqref{eqn:reachabilitypositive}.
\\

\noindent
\textbf{Proof of~\eqref{eqn:reachabilityzero}.} Let $I\subseteq [h,+\infty)$ be such that $\text{Leb}(I)=0$. 
Denoting $v$ an arbitrary neighbour of $\circ$, we have for all $a\geq h$, by Proposition~\ref{prop:recursivegfftrees}: $\dP^{\Td}_a(\phid(v)\in I)=0$. Iterating this to each generation of $\Td$ (whose vertex set is countable), we obtain
\[
\dP^{\Td}_a(\exists y\in \Td\setminus\{\circ\}, \, \phid(y)\in I)=0
\]
for $a\geq h$. Since $\dP^{\Td}_a(\vert \Chplus\vert =+\infty)\geq \dP^{\Td}_h(\vert \Chplus\vert =+\infty)\geq q_{h,\delta_0}(h)>0$, we get
\begin{align*}
\sup_{a\geq h}Q(a,I\times \cM) &\leq \sup_{a\geq h}\sP^{h,+,\infty}_a(\exists y\in \Td\setminus\{\circ\}, \, \phid(y)\in I)
\\
&\leq \sup_{a\geq h} q_{h,\delta_0}(h)^{-1}\dP^{\Td}_a(\exists y\in \Td\setminus\{\circ\}, \, \phid(y)\in I)=0,
\end{align*}
and~\eqref{eqn:reachabilityzero} follows.
\\

\noindent
\textbf{Proof of~\eqref{eqn:reachabilitycompact}.} It is enough to show that if $K>0$ is large enough, then for all $Y\in \cX$, 
\begin{equation}\label{eqn:reachabilitycompactproof}
\dP^Q(\exists i >1, \varphi(Y_i)<h+K\,  \vert \,Y_0=Y)=1.
\end{equation}
Indeed, by the strong Markov property applied to the sequence $(Y_i)_{i\geq 0}$ on the return times of $\varphi(Y_i)$ in $[h,h+K]$, ~\eqref{eqn:reachabilitycompactproof} implies that $\dP^Q$-a.s., there exists an infinite increasing sequence $(i_k)_{k\geq 1}$ sucht that $\varphi(Y_{i_k})\in [h,h+K]$ for every $k$. By~\eqref{eqn:reachabilitypositive} with $I=[h,h+K']\times \cM$, $\min_{k\geq 1}\dP^Q(\varphi(Y_{i_k+1})\in [h,h+K'])>0$. Hence, using again the strong Markov property (as $i_k$ is a stopping time w.r.t.~the canonical filtration of $(Y_i)_{i\geq 0}$), we obtain~\eqref{eqn:reachabilitycompact}. 
\\
We now establish~\eqref{eqn:reachabilitycompactproof}. To do so, we rely on Lemma~\ref{lem:excursionsaboveK} below, which states that for $K$ large enough, if $\phid(X_{\tau_i})>K$, then $\phid(X_{\tau_{i+1}})-\phid(X_{\tau_i})$ has exponential moments and a negative expectation. 
\\
Fix $K > \frac{d-1}{d-2}(1+\dE[\Gamma']+\vert h\vert )>0$, where $\Gamma'$ is defined in Lemma~\ref{lem:excursionsaboveK}. Let $Y\in \cX$, and let $Y_0=Y$. Let $t:=\min\{i\geq 1, \, \varphi(Y_i)<h+K\}$, which is a stopping time w.r.t.~the canonical filtration of $(Y_i)_{i\geq 1}$. We only have to prove that $t$ is $\dP^Q$-a.s.~finite. For every $i\geq 0$, if $\varphi(Y_i)>h+K$, then by~\eqref{eqn:QYsP+} and Lemma~\ref{lem:excursionsaboveK}  the difference $\phid(Y_{i+1})-\phid(Y_{i})$ is stochastically dominated by $W-\frac{d-2}{d-1}(h+K)$, where $W\sim\Gamma'$ (note that for all $a\geq h$, $a-\frac{\max(a,0)}{d-1}\geq \frac{d-2}{d-1}a$). Therefore, for all $m\geq 1$, we have 
%
%
%
\begin{equation}\label{eqn:GFFattauicompact}
\dP^Q(t\geq m \,\vert \,Y_0=Y)\leq \dP\left(\sum_{i=1}^{m-1}\left(W_i -\frac{d-2}{d-1}(h+K)\right) \geq h+K-\varphi(Y) \right),
\end{equation}
\noindent
where the $W_j$'s are i.i.d. variables of law $\Gamma'$. By our choice of $K$, $\dE[W_1 -\frac{d-2}{d-1}(h+K)]<-1$ and by Lemma~\ref{lem:excursionsaboveK}, $W_1$ has exponential moments. Therefore, by the exponential Markov inequality, there exist $c,c'>0$ uniquely depending on $d$ and $h$ so that for every choice of $Y\in \cX$ and every $m\geq 2(\varphi(Y)-h-K)+1 $, 
\begin{equation}\label{eqn:excursionsaboveKbis}
\dP^Q(t\geq m\,\vert \,Y_0=Y)\leq \dP\left(   \sum_{i=1}^{m-1}\left(W_i -\frac{d-2}{d-1}(h+K)\right) \geq (1-m)/2  \right)\leq  ce^{-c'm}.
\end{equation}
By Borel-Cantelli's Lemma (the sequence $(ce^{-c'm})_{m\geq 1}$ being summable), $t$ is a.s.~finite, and this concludes the proof. 
\end{proof}

\begin{lemma}\label{lem:Vpotential}
There exists a large enough constant $\Cxi >0$ such that the following two statements hold. 
\\
1) For every $Y\in \cX$, 
\begin{equation}\label{eqn:driftcondition}
QV(Y)\leq \frac{2V(Y)}{3} +\Cxi \mathbf{1}_{  \{ Y\in [h,h+\Cxi ]\times \cM  \}   }.
\end{equation}
2) There exists a probability measure $\nu_\star$ on $\cX$ and a constant $\alpha >0$ such that for every Borel set $B\in \cB(\cX)$ and every $Y\in [h,h+\Cxi ]\times \cM$,
\begin{equation}\label{eqn:doeblincondition}
Q(Y,B) \geq \alpha \nu_{\star}(B).
\end{equation}
\end{lemma}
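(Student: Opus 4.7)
The plan is to establish the two parts separately, both relying on tools already developed in the paper.

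For the drift condition~\eqref{eqn:driftcondition}, I would first observe via~\eqref{eqn:QYsP+} that the distribution of $Y_1$ given $Y_0$ depends only on $\varphi(Y_0)$. Proposition~\ref{prop:expomomentsrenewal} provides stretched exponential tails for both $\h(Y_1)$ and $\tau(Y_1)$ uniformly in $\varphi(Y_0)\geq h$, so that $\sup_{a\geq h}\sEr_a[\h(Y_1)^2+\tau(Y_1)^2]<\infty$. For the $\varphi$-part, I would use the stochastic domination from Lemma~\ref{lem:excursionsaboveK} exactly as in the derivation of~\eqref{eqn:GFFattauicompact} in the proof of Lemma~\ref{lem:reachability}: the law of $\varphi(Y_1)$ given $\varphi(Y_0)=a$ is dominated by $\max(a,0)/(d-1)+W$ with $W$ having exponential moments uniform in $a$, hence
\begin{equation*}
\sEr_a[\varphi(Y_1)]\leq \frac{a}{d-1}+C_\varphi
\end{equation*}
for some constant $C_\varphi>0$ depending only on $d$ and $h$. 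Summing the three bounds yields $QV(Y)\leq V(Y)/(d-1)+C$ for some $C>0$ (using $\h(Y_1)^2,\tau(Y_1)^2\geq 0$ and $V(Y)\geq \varphi(Y)$). Since $1/(d-1)\leq 1/2<2/3$ when $d\geq 3$, taking $\Cxi$ large enough ensures that, when $\varphi(Y)>h+\Cxi$, the slack $V(Y)(2/3-1/(d-1))$ dominates $C$, and when $\varphi(Y)\in[h,h+\Cxi]$, the additive term $\Cxi$ on the right-hand side absorbs the remaining constants.

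For the minorization~\eqref{eqn:doeblincondition}, I would fix a simple element $\mathfrak{m}_0=(T_0,W_0)\in\cM$, where $T_0$ is the single edge from $\circ$ to a neighbour $v\neq\oc$ in $\Td^+$ and $W_0=(\circ,v)$. For $a\in[h,h+\Cxi]$ and any Borel $I\subseteq[h,h+1]$, I would bound $Q(a,I\times\{\mathfrak{m}_0\})$ from below by the $\sPr_a$-probability of the joint event that $\phid(v)\in I$, that the other children of $\circ$ do not belong to $\Chplus$, that the subtree $\cT_v$ is $\delta_0$-transient, that $X_1=v$, and that the walk never returns to $\circ$ afterwards. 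By Proposition~\ref{prop:recursivegfftrees}, the density of $\phid(v)$ given $\phid(\circ)=a$ is Gaussian with mean $a/(d-1)$ and variance $d/(d-1)$, hence is bounded below by a positive constant on $[h,h+1]$ uniformly in $a\in[h,h+\Cxi]$. The remaining factors are handled by Proposition~\ref{prop:domainMarkov} (conditional independence of disjoint subtrees), Proposition~\ref{prop:transienceintro} (uniform $\delta_0$-transience), and the trivial lower bound $d^{-1}$ for the first walk step to $v$. Assembling all the factors gives $Q(a,I\times\{\mathfrak{m}_0\})\geq c_0\cdot\text{Leb}(I)$ uniformly, whence~\eqref{eqn:doeblincondition} follows with $\nu_\star:=\text{Unif}([h,h+1])\otimes\delta_{\mathfrak{m}_0}$ and $\alpha:=c_0$.

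The main difficulty will be the drift condition, specifically obtaining the genuinely contractive bound $\sEr_a[\varphi(Y_1)]\leq a/(d-1)+C_\varphi$ rather than merely an additive decrease. The renewal conditioning biases the distribution of $\phid(X_{\tau_1})$ upward relative to the unconditional GFF, and care is needed to preserve the geometric factor $1/(d-1)$ from Proposition~\ref{prop:recursivegfftrees} while handling this selection (which is precisely what Lemma~\ref{lem:excursionsaboveK} is designed to do). The minorization, by contrast, is essentially a one-step Gaussian density estimate and follows closely the lower bounds already derived in the proof of Lemma~\ref{lem:reachability}.
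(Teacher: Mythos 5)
Your proposal is correct and follows essentially the same route as the paper. For the drift condition, you identify the same two ingredients — the uniform second-moment bounds on $\h(Y_1)$ and $\tau(Y_1)$ from Proposition~\ref{prop:expomomentsrenewal}, and the contractive bound $\sEr_a[\varphi(Y_1)]\le a/(d-1)+C_\varphi$ from Lemma~\ref{lem:excursionsaboveK} — and you correctly diagnose that the genuinely multiplicative factor $1/(d-1)<2/3$ is indispensable: a purely additive decrease would not dominate $\tfrac{2}{3}V(Y)$ for large $\varphi(Y)$. Your split into the regimes $\varphi(Y)>h+\Cxi$ and $\varphi(Y)\in[h,h+\Cxi]$ is exactly the paper's computation.

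The only substantive difference is in the minorization. The paper minorizes through the full first-generation configuration $T_1=B_{\Td^+}(\circ,1)$ with all $d-1$ children alive and $\min_i\phid(v_i)\ge h$, whereas you minorize through the single-edge configuration in which exactly one child survives. Both choices produce a valid small set; yours forces the first coordinate of the chain after one step, while the paper's keeps $d-2$ extra Gaussian factors $p_h^{d-2}$ but avoids conditioning on children being excluded. A further cosmetic difference: you propagate the escape probability from $v$ using the $\delta_0$-transience of $\cT_v$ (via Proposition~\ref{prop:transienceintro}), whereas the paper invokes the uniform constant $M_h$ of \eqref{eqn:minprobatransience} directly. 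Both give a positive constant uniform in $a\in[h,h+\Cxi]$, since the one-step Gaussian density of $\phid(v)$ given $\phid(\circ)=a$ is bounded below on $[h,h+1]$ uniformly in $a$ on any compact range. There is no gap; this is a cosmetic variation on the same argument.
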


\begin{proof} \textbf{Proof of~\eqref{eqn:driftcondition}.} By Proposition~\ref{prop:expomomentsrenewal} and~\eqref{eqn:QYsP+}, there exists $\Cxi >0$ such that
\begin{equation}\label{eqn:Vdominated}
\sup_{Y\in \cX}\int_{Y'\in \cX} (\h(Y')^2+\tau(Y')^2 ) Q(Y,dY') <\frac{\Cxi }{100}.
\end{equation}
Moreover, by Lemma~\ref{lem:excursionsaboveK}, if $\Cxi $ is large enough, then for all $a\geq h+\Cxi $,
\begin{equation}\label{eqn:Vdominatedfirstpart}
\int_{Y'\in \cX} \varphi(Y) Q(a,dY') \leq \dE[W]- \frac{(d-2)a}{d-1}  < \frac{4a}{7},
\end{equation}
where $W\sim \Gamma'$. Hence, taking $\Cxi $ large enough so that~\eqref{eqn:Vdominated} and~\eqref{eqn:Vdominatedfirstpart} hold, we get for every $Y\in \cX\setminus [h,h+\Cxi ]\times \cM$:
\begin{equation}\label{eqn:VdominatedlargeY}
QV(Y)\leq \frac{\Cxi }{100}+ \frac{4\varphi(Y)}{7} \leq \frac{2\varphi(Y)}{3}\leq \frac{2V(Y)}{3}.
\end{equation}
Lemma~\ref{lem:excursionsaboveK} also entails that for $Y\in [h,h+\Cxi ]\times \cM$,  we have 
\[
 \int_{Y'\in \cX} \varphi(Y) Q(Y,dY') \leq  \dE[W]+\max_{h\leq a\leq h+\Cxi ]} \frac{\max(a,0)}{d-1}-a\leq \dE[W]+2\vert h\vert +\frac{2\Cxi }{3}\leq \frac{3\Cxi }{4}
\]
if we choose $\Cxi > 12\dE[W]+24\vert h\vert$. Combining this with~\eqref{eqn:Vdominated}, we obtain that for every $Y\in [h,h+\Cxi ]\times \cM$, 
\[
QV(Y)\leq \Cxi .
\]
Together with~\eqref{eqn:VdominatedlargeY}, this yields~\eqref{eqn:driftcondition}.
\\

\noindent
\textbf{Proof of~\eqref{eqn:doeblincondition}.} Let $v_1\ldots, v_{d-1}$ be the neighbours of $\circ$ other than $ \oc$. Let $T_1:=B_{\Td^+}(\circ,1)$ be the tree whose vertices are $\circ, v_1, \ldots, v_{d-1}$, and let $W_1:=(\circ, v_1)$.
Let  $Y\in \cX$ and let $B'$ be a Borel set of $[h,h+\Cxi ]$. Note that by~\eqref{eqn:QYsP+},
\begin{equation}\label{eqn:QlargerP+}
\begin{split}
Q(Y,B'\times \{ (T_1,W_1) \} ) &\geq  \sPr_{\varphi(Y)}(\{X_1=v_1\}\cap \{\phid(v_1)\in B'\}\cap \{\tau_1=1\}\cap \{\min_{1\leq i \leq d-1}\phid(v_i)\geq h\} )
\\
&\geq   \sP^{h,+}_{\varphi(Y)}(\{X_1=v_1\}\cap \{\phid(v_1)\in B'\}\cap \{\tau_1=1\}\cap \{\min_{1\leq i \leq d-1}\phid(v_i)\geq h\}).
\end{split}
\end{equation}
Let
\begin{equation}\label{eqn:alphadef}
\alpha:= \frac{M_hp_h^{d-2}}{d}\int_{[h,h+\Cxi ]}f_{min}(x)dx,
\end{equation}
where $M_h$ was defined at~\eqref{eqn:minprobatransience}, $ f_{min}(x):=\min_{b\in [h,h+\Cxi ]} \nu_1(x-b/(d-1))>0$ and $p_h:=\dP^{\Td}_h(\phid(v_1)\geq h)>0$. We have
\begin{equation}\label{eqn:P+largerfmin}
  \sP^{h,+}_{\varphi(Y)}(\{X_1=v_1\}\cap \{\phid(v_1)\in B'\}\cap \{\tau_1=1\}\cap \{\min_{1\leq i \leq d-1}\phid(v_i)\geq h\})\geq \alpha \int_{B'} f_{min}(x)dx.
\end{equation}
Indeed, with $ \sP^{h,+}_{\varphi(Y)}$-probability at least $p_h^{d-2}\int_{B'} f_{min}(x)dx$, $\phid(v_1)\in B'$, and a SRW on $\Chplus$ and $\phid(v_i)\geq h$ for $2\leq i\leq d-1$. Then, a SRW starting at $\circ$ has probability at least $1/d$ to jump to $v_1$, and the $\sP^{h,+}_{\varphi(Y)}$-probability that the SRW stays forever in the subtree from $v_1$ (so that $\tau_1=1$) is at least $M_h$. 
For every $B\in \cB(\cX)$, denote $B_{1,1}=\{x\in [h,h+\Cxi ],  (x,T_1,W_1)\in B \}$ and let
\begin{equation}\label{eqn:nustardef}
\nu_{\star}(B):=\frac{\int_{B_{1,1}}  f_{min}(x)dx}{\int_{B}  f_{min}(x)dx},
\end{equation}
which is clearly a probability measure on $\cX$. By~\eqref{eqn:QlargerP+},~\eqref{eqn:alphadef},~\eqref{eqn:P+largerfmin} and~\eqref{eqn:nustardef}, we have for every $B\in \cB(\cX)$:
\[
Q(Y,B)\geq   \sP^{h,+}_{\varphi(Y)}(\{X_1=v_1\}\cap \{\phid(v_1)\in B'\}\cap \{\tau_1=1\}\cap \{\min_{1\leq i \leq d-1}\phid(v_i)\geq h\}) \geq \alpha\nu_\star(B).
\]
This shows~\eqref{eqn:doeblincondition}, and the proof is complete.
\end{proof}

\begin{lemma}\label{lem:excursionsaboveK}
For $a\geq h$, let $\Gamma_a$ be the distribution of $\phid(X_{\tau_{1}})-a$ under $\sPr_a$ (which is also the distribution of $\phid(X_{\tau_{i+1}})-\phid(X_{\tau_i})$ under $\sPr_b(\,\cdot\,\vert \,\phid(X_{\tau_i})=a)$ for every $i\geq 1$ and $b\geq h$). There exists a distribution $\Gamma'$ on $\dR^+$ and $\theta'>0$ such that $\dE[e^{\theta'W}]<+\infty$ for $W\sim \Gamma'$, and such that for all $a\geq h$, 
\begin{equation}\label{eqn:excursionsaboveK}
\Gamma_a\stdom \Gamma' +\frac{a_+}{d-1}-a, \text{ with }a_+:=\max(a,0). 
\end{equation}
\end{lemma}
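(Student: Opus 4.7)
The plan is to decompose $\phid(X_{\tau_1})$ along the ancestral path from $\circ$ to $X_{\tau_1}$ via the recursive construction of Proposition~\ref{prop:recursivegfftrees}, reducing the claim to a uniform exponential tail estimate on a Gaussian increment $G_k$. That estimate is then obtained by combining the exponential tail of $|X_{\tau_1}|$ proved in~\eqref{eqn:step2} with a standard Gaussian union bound on a deterministic ball, the link between the two being provided by absolute continuity against $\sP^{h,+}_a$.

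\textbf{Step 1 (path decomposition).} Let $\xi_{X_{\tau_1}} = (\circ = z_0, z_1, \ldots, z_k = X_{\tau_1})$ with $k := |X_{\tau_1}| \geq 1$ (since $\tau_1 < \infty$ holds $\sPr_a$-a.s.). Iterating the recursion $\phid(z_j) = (d-1)^{-1}\phid(z_{j-1}) + \sqrt{d/(d-1)}\,\zeta_{z_j}$ of Proposition~\ref{prop:recursivegfftrees} yields
\begin{equation*}
\phid(X_{\tau_1}) = \frac{a}{(d-1)^k} + G_k, \qquad G_k := \sqrt{\tfrac{d}{d-1}}\sum_{j=1}^k (d-1)^{j-k}\zeta_{z_j}.
\end{equation*}
A case split on the sign of $a$ gives $a(d-1)^{-k} \leq a_+/(d-1)$ for every $a \in \dR$ and $k \geq 1$. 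Hence $\phid(X_{\tau_1}) - a \leq (a_+/(d-1) - a) + G_k$ almost surely, and~\eqref{eqn:excursionsaboveK} reduces to finding a distribution $\Gamma'$ on $\dR^+$ with exponential moments such that $G_k \stdom W \sim \Gamma'$ under $\sPr_a$, uniformly in $a \geq h$.

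\textbf{Step 2 (uniform tail for $G_k$).} Summing the geometric series gives $|G_k| \leq C_* M_k$, with $C_* := \sqrt{d(d-1)}/(d-2)$ and $M_k := \max_{1 \leq j \leq k}|\zeta_{z_j}|$. Combining~\eqref{eqn:absolcontsPa},~\eqref{eqn:minprobatransience}, and the monotonicity of $q_h$ provided by Lemma~\ref{lem:monotonicityphid}, there is a constant $C_{\rm ac}>0$ depending only on $d,h$ such that $\sPr_a(A) \leq C_{\rm ac}\sP^{h,+}_a(A)$ for every event $A$ and $a \geq h$; and on events involving only the GFF, $\sP^{h,+}_a$ coincides with $\dP^{\Td}_a$, under which $(\zeta_y)_{y \in \Td^+ \setminus \{\circ\}}$ is i.i.d.\ $\mathcal{N}(0,1)$ (independent of $\zeta_\circ$). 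Set $K_s := \lceil s \rceil$ for $s > 0$. By~\eqref{eqn:step2} and absolute continuity, $\sPr_a(k \geq K_s) \leq C e^{-cs}$. On $\{k \leq K_s\}$, $M_k \leq \max_{y \in B_{\Td^+}(\circ, K_s)\setminus\{\circ\}}|\zeta_y|$, and a union bound over the $\leq d^{K_s+1}$ vertices with the Gaussian tail $\dP(|\mathcal{N}(0,1)|\geq u)\leq 2e^{-u^2/2}$ gives
\begin{equation*}
\sPr_a(k \leq K_s,\; M_k \geq s/C_*) \leq 2 C_{\rm ac} d^{K_s+1} e^{-s^2/(2C_*^2)} \leq Ce^{-cs}
\end{equation*}
for $s$ large enough, since $d^{K_s}=e^{O(s)}$ is absorbed by $e^{-s^2/(2C_*^2)}$. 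Summing the two bounds, $\sup_{a \geq h}\sPr_a(G_k \geq s) \leq Ce^{-cs}$ for $s$ large.

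\textbf{Step 3 (construction of $\Gamma'$ and identification).} Define $\overline F(s) := 1$ for $s \leq 0$ and $\overline F(s) := \min(1, \sup_{a \geq h}\sPr_a(G_k \geq s))$ for $s > 0$. By Step 2, $\overline F$ is non-increasing, equals $1$ on $\dR^-$, and decays at least like $Ce^{-cs}$; it therefore defines a probability distribution $\Gamma'$ on $\dR^+$ with $\dE[e^{\theta' W}] < +\infty$ for some $\theta' > 0$, and by construction $G_k \stdom W \sim \Gamma'$ under every $\sPr_a$, which together with Step 1 gives~\eqref{eqn:excursionsaboveK}. The identification of $\Gamma_a$ with the law of $\phid(X_{\tau_{i+1}}) - \phid(X_{\tau_i})$ under $\sPr_b(\,\cdot\,|\,\phid(X_{\tau_i}) = a)$ is immediate from the renewal structure of Remark~\ref{rem:renewalabsolutcont} and Proposition~\ref{prop:expomomentsrenewal}. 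The main technical obstacle is that $\sPr_a$ couples the walk's trajectory to the GFF along it, so that the $\zeta_{z_j}$'s appearing in $G_k$ are not standard normals under $\sPr_a$; the uniform absolute continuity bound is what lets us replace this coupled law by a Gaussian union bound on the deterministic ball $B_{\Td^+}(\circ, K_s)$.
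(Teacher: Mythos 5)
Your proof is correct and follows essentially the same route as the paper's: reduce to $\phid(X_{\tau_1}) - a/(d-1)^{|X_{\tau_1}|}$, bound $a/(d-1)^k \le a_+/(d-1)$, then combine the stretched-exponential tail on $|X_{\tau_1}|$ from Proposition~\ref{prop:expomomentsrenewal} with a Gaussian union bound over the deterministic ball, linking the two via absolute continuity against $\sP^{h,+}_a$ (using $M_h$ and $q_h(h)$). The only cosmetic difference is that you spell out the path decomposition from Proposition~\ref{prop:recursivegfftrees} and bound $G_k$ via $M_k$, whereas the paper directly states the marginal law of $\phid(x)$ at height $k$ — these are the same computation.
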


\begin{proof}
 By Proposition~\ref{prop:expomomentsrenewal},  we have for every $a\geq h$:
\begin{equation}\label{eqn:excusrionsaboveKheight}
\sPr_a( \vert X_{\tau_{1}}\vert  \geq m)\leq  \sPr_a( {\tau_{1}}  \geq m) \leq \Ci e^{-\Cii m}.
\end{equation}
We now bound the maximum of $\phid$ in $ B_{\Chplus}(\circ,m)$ (note that~\eqref{eqn:excusrionsaboveKheight} shows that $X_{\tau_{1}}$ is located with overwhelming probability in this ball as $m\rightarrow +\infty$). We have for every $a\geq h$ and $m\geq 1$:
\begin{align*}
&\sPr_a\left(\max_{x\in B_{\Chplus}(\circ,m)\setminus\{\circ\}}\phid(x) \geq m+\frac{a_+}{d-1}\right)
\\
&\leq M_h^{-1} \sP^{h,+,\infty}_a\left(\max_{x\in B_{\Td}(\circ,m)\setminus\{\circ\}}\phid(x) \geq m+\frac{a_+}{d-1}\right)
\\
&\leq M_h^{-1}\dP^{\Td}_h(\vert \Chplus\vert =\infty)^{-1}\dP^{\Td}_a\left(\max_{x\in B_{\Td}(\circ,m)\setminus\{\circ\}}\phid(x) \geq m+\frac{a_+}{d-1}\right). 
\end{align*}
But conditionally on $\phid(\circ)=a$, we have for all $k\geq 1$ and all $x\in \Td$ such that $\vert x\vert =k$: $\phid(x)\sim \frac{a}{(d-1)^k}+Y$, where $Y\sim \cN\left(0,\frac{d-1}{d-2}(1-(d-1)^{-2k})\right)$. Thus, by the exponential Markov inequality for a centred Gaussian variable, noticing that $\frac{a}{(d-1)^k}\leq \frac{a_+}{d-1} $ and that $\text{Var}(Y)\leq 2$ for all $k\geq 1$, we have
\[
\dP^{\Td}_a\left(\phid(x)\geq m+\frac{a_+}{d-1}\right)\leq \dP(Y\geq m)\leq \exp(-m^2/(2\text{Var}(Y)))\leq \exp(-m^2/8).
\]
Since $\vert B_{\Td}(\circ,m)\vert\leq d^m$, by a union bound for $x\in B_{\Td}(\circ,m)$, we thus obtain that if $C>0$ is large enough (depending only on $d$ and $h$), then 
\[
\sPr_a\left(\max_{x\in B_{\Chplus}(\circ,m)\setminus\{\circ \}}\phid(x) \geq m+\frac{a_+}{d-1}\right)\leq ce^{-\Cii m}.
\]

\noindent
Combining this with \eqref{eqn:excusrionsaboveKheight} yields
\[
\sPr_a\left(\phid(X_{\tau_{1}})-a  \geq m+\frac{a_+}{d-1}-a \right)\leq (c+\Ci)e^{-\Cii m}.
\]
The conclusion follows.
\end{proof}

%
%

\subsection{Pointwise LLN and CLT}\label{subsec:pointwise}
In this Section, we establish the following result, which will provide the convergence for finite-dimensional marginals of the processes in Theorem~\ref{thm:LLN}. 

\begin{proposition}\label{prop:LLNandCLT}
For every $h<h_{\star}$, there exists constants $s_h,\sigma_h>0$ such that if $(X_k)_{k\geq 0}$ is a SRW on $\Ch$ started at $\circ$, then
\begin{equation}\label{eqn:propLLN}
\frac{\vert X_k\vert }{k}\,\overset{\sPr_h-a.s.}{\longrightarrow}\,s_h,
\end{equation}
and under~$\sPr_h$,
\begin{equation}\label{eqn:propCLT}
\frac{\vert X_k\vert -s_hk}{\sqrt{k}}\overset{(d)}{\longrightarrow} \cN(0,\sigma_h^2). 
\end{equation}
\end{proposition}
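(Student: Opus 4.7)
The plan is to deduce Proposition~\ref{prop:LLNandCLT} from Proposition~\ref{prop:CLThtau} by a standard renewal-theoretic time change, using the renewal times $\tau_i$ as the skeleton and treating the fluctuation between consecutive renewals as negligible. Define the renewal counting process $N_k:=\max\{i\geq 1: \tau_i\leq k\}$. Since $\tau_{N_k}\leq k<\tau_{N_k+1}$, and since by the very definition of a renewal time one has $|X_j|\in [|X_{\tau_{N_k}}|,|X_{\tau_{N_k+1}}|)$ for every $\tau_{N_k}\leq j<\tau_{N_k+1}$, we obtain the sandwich
\[
|X_{\tau_{N_k}}|\leq |X_k|<|X_{\tau_{N_k+1}}|,\qquad \tau_{N_k}\leq k<\tau_{N_k+1}.
\]

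For the LLN, Proposition~\ref{prop:CLThtau} gives $\tau_n/n\to s_{h,\tau}$ and $|X_{\tau_n}|/n\to s_{h,X}$, $\sPr_h$-a.s., with both constants positive (positivity of $s_{h,X}$ is immediate since $|X_{\tau_{i+1}}|-|X_{\tau_i}|\geq 1$). Inverting the first convergence via $\tau_{N_k}\leq k<\tau_{N_k+1}$ yields $N_k/k\to 1/s_{h,\tau}$, and the sandwich then forces $|X_k|/k\to s_{h,X}/s_{h,\tau}=:s_h$, establishing~\eqref{eqn:propLLN}.

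For the CLT, I would first upgrade Proposition~\ref{prop:CLThtau} to a joint CLT. Given any $\alpha,\beta\in\dR$, the functional $f(Y):=\alpha\tau(Y)+\beta\mathfrak h(Y)$ satisfies $f(Y)^2\leq 2(\alpha^2+\beta^2)V(Y)$ with $V$ from~\eqref{eqn:Vpotentialdef}, so the general statement in Proposition~\ref{prop:CLThtau} gives a one-dimensional CLT for $\alpha\tau_k+\beta|X_{\tau_k}|$. By the Cramér--Wold device,
\[
\frac{1}{\sqrt{k}}\bigl(\tau_k-s_{h,\tau}k,\; |X_{\tau_k}|-s_{h,X}k\bigr)\overset{(d)}{\longrightarrow}\cN(0,\Sigma)
\]
for some $2\times 2$ covariance $\Sigma$. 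Then I would decompose
\[
|X_k|-s_hk=\bigl(|X_k|-|X_{\tau_{N_k}}|\bigr)+\bigl(|X_{\tau_{N_k}}|-s_{h,X}N_k\bigr)+s_{h,X}\bigl(N_k-k/s_{h,\tau}\bigr).
\]
The first term is bounded by $|X_{\tau_{N_k+1}}|-|X_{\tau_{N_k}}|$; by the stretched exponential tails of Proposition~\ref{prop:expomomentsrenewal} and Borel--Cantelli, $\max_{i\leq k}\bigl(|X_{\tau_{i+1}}|-|X_{\tau_i}|\bigr)=O((\log k)^6)=o(\sqrt k)$ a.s., so this contribution vanishes after rescaling by $\sqrt k$. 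An Anscombe-type argument, combined with $N_k/k\to 1/s_{h,\tau}$ and the joint CLT above, converts the remaining two terms into a single Gaussian: one obtains~\eqref{eqn:propCLT} with
\[
\sigma_h^2=\frac{1}{s_{h,\tau}}\,\mathrm{Var}\bigl(|X_{\tau_1}|-(s_{h,X}/s_{h,\tau})\tau_1\bigr)_{\Sigma},
\]
where the right-hand side is the variance of the linear combination $(1,-s_{h,X}/s_{h,\tau})$ against $\Sigma$, divided by $s_{h,\tau}$.

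The main obstacle is the joint/random-time step. Proposition~\ref{prop:CLThtau} as stated gives only marginal CLTs, and one needs the Cramér--Wold upgrade—this is routine only because the drift function $V$ in~\eqref{eqn:Vpotentialdef} was deliberately chosen to dominate both $\tau^2$ and $\mathfrak h^2$, so every linear combination of $\tau$ and $\mathfrak h$ inherits a CLT. The Anscombe step, in turn, requires the intra-renewal fluctuation $|X_k|-|X_{\tau_{N_k}}|$ to be genuinely sub-Gaussian in scale; this is exactly what the stretched exponential bound of Proposition~\ref{prop:expomomentsrenewal} supplies, and without it the passage from renewal-indexed CLT to step-indexed CLT would fail.
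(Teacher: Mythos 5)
Your LLN proof matches the paper's exactly: sandwich $|X_k|$ between $|X_{\tau_{N_k}}|$ and $|X_{\tau_{N_k+1}}|$, control $\max_i(\tau_{i+1}-\tau_i)$ via the stretched-exponential tails of Proposition~\ref{prop:expomomentsrenewal} and Borel--Cantelli, then invert. Your CLT strategy is also essentially the paper's, phrased slightly differently: the paper writes $\vert X_{\tau_{\theta_k}}\vert/k - s_h$ as a ratio $\frac{\vert X_{\tau_{\theta_k}}\vert}{\tau_{\theta_k}}\cdot\frac{\tau_{\theta_k}}{k}-s_h$ and does a first-order expansion to isolate the single affine combination $W_i = \widetilde X_i/s_{h,X} - \widetilde\tau_i/s_{h,\tau}$, to which it applies the general CLT of Proposition~\ref{prop:CLThtau} directly; you go through a two-dimensional CLT and Cram\'er--Wold plus Anscombe. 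These routes produce the same limiting Gaussian, and both rely on the same observation (which you correctly flag) that the drift function $V$ in~\eqref{eqn:Vpotentialdef} dominates any quadratic in $\tau$ and $\mathfrak h$, so any fixed affine combination inherits the CLT.

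However, there is a genuine gap: you never show $\sigma_h>0$, and this is part of the claimed statement. Proposition~\ref{prop:CLThtau} only guarantees $\sigma_{h,f}\geq 0$ (the limit may be a point mass at $0$), and in your formula the variance of the linear combination $(1,-s_{h,X}/s_{h,\tau})$ against $\Sigma$ could in principle vanish --- precisely when $|X_{\tau_1}|$ and $\tau_1$ are a.s.\ in a fixed affine relation under the stationary law. This degeneracy has to be ruled out by hand. The paper devotes a separate sub-argument to this: it introduces the set $\cM_2$ of renewal blocks whose tree has exactly two edges and height two, shows via~\eqref{eqn:reachabilitypositive} and the ergodic theorem that a positive proportion of the first $k$ renewal blocks land in $[h,\infty)\times\cM_2$, and observes that on such a block the walk performs a $\text{Geom}(1/2)$ number of back-and-forths on the first edge while $|X_{\tau_{i+1}}|-|X_{\tau_i}|$ stays frozen at $2$, giving a conditional variance of $W_i$ bounded below by a constant. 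Combined with the conditional independence of renewal blocks given the $\phid(X_{\tau_i})$'s and the total-variance formula, this forces $\mathrm{Var}(\sum_{i\le k}W_i)\gtrsim k$. Without some argument of this type your proof does not yield the stated $\sigma_h>0$.
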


\begin{proof}[Proof of~\eqref{eqn:propLLN}.] We establish~\eqref{eqn:propLLN} with $s_h=s_{h,X}/s_{h,\tau}$. For $k\geq 1$, define $\theta_k:=\max\{i\geq 0, \tau_i \leq k\}$. Note that the LLN~\eqref{eqn:CLThandtau} implies that
\begin{equation}\label{eqn:LLNthetainversetau}
\theta_k/k \overset{\sPr_h-a.s.}{\longrightarrow} 1/s_{h,\tau}
\end{equation}
We have 
\begin{align*}
\left\vert\frac{ \vert X_k\vert }{k}- \frac{s_{h,X}}{s_{h,\tau}}\right\vert &\leq \left\vert\frac{ \vert X_k\vert }{k}-\frac{\vert X_{\tau_{\theta_k}}\vert }{k}\right\vert + \left\vert \frac{\vert X_{\tau_{\theta_k}}\vert }{k}- \frac{s_{h,X}}{s_{h,\tau}}\right\vert 
\\
&\leq \left\vert 1-\frac{ \tau_{\theta_k}}{k}\right\vert +  \left\vert \frac{\vert X_{\tau_{\theta_n}}\vert }{n}- \frac{s_{h,X}}{s_{h,\tau}}\right\vert 
\\
&\leq\frac{ \max_{1\leq k\leq n+1}(\tau_k-\tau_{k-1})}{n} +  \left\vert \frac{\vert X_{\tau_{\theta_n}}\vert }{n}- \frac{s_{h,X}}{s_{h,\tau}}\right\vert.
\end{align*}
By Proposition~\ref{prop:expomomentsrenewal} and a union bound over $k$ we have for $k$ large enough: 
\begin{center}
$\sPr_h(\max_{1\leq i\leq k+1}(\tau_i-\tau_{i-1}) \geq k^{1/3})\leq (k+1)\Ci e^{-\Cii k^{1/18}}\leq k^{-5}.$
\end{center}
Applying Borel-Cantelli's Lemma, we have that $\sPr_h$-a.s., 
\begin{equation}\label{eqn:maxtaubounded}
\left\vert\frac{ \vert X_k\vert }{k}-\frac{\vert X_{\tau_{\theta_k}}\vert }{k}\right\vert\leq    \left\vert 1-\frac{ \tau_{\theta_k}}{k}\right\vert \leq \frac{ \max_{1\leq i\leq k+1}(\tau_i-\tau_{i-1})}{k}\leq k^{-2/3}
\end{equation}
for $k$ large enough. Thus, we only have to show that $\sPr_h$-a.s.
\begin{equation}\label{eqn:LLNXtauthetan}
 \left\vert \frac{\vert X_{\tau_{\theta_k}}\vert }{k}- \frac{s_{h,X}}{s_{h,\tau}}\right\vert \rightarrow 0.
\end{equation}
Let $\varepsilon \in (0,1/10)$. Then~\eqref{eqn:CLThandtau},~\eqref{eqn:CLThandtauX} and~\eqref{eqn:LLNthetainversetau} imply that $\sPr_h$-a.s., there exists $k_0\in \dN$ such that for all $k\geq k_0$,
\begin{equation}
\begin{split}
(1-\varepsilon)s_{h,\tau}k \leq& \tau_k \leq(1+\varepsilon)s_{h,\tau}k,\,\,\,(1-\varepsilon)s_{h,X}k \leq \vert X_{\tau_k}\vert  \leq(1+\varepsilon)s_{h,X}k 
\\
\text{ and }& (1-\varepsilon)s_{h,\tau}^{-1}k\leq \theta_k \leq (1+\varepsilon)s_{h,\tau}^{-1}k,
\end{split}
\end{equation}
so that $\vert X_{\tau_{\theta_k}}\vert \leq (1+\varepsilon)^3\frac{s_{h,X}}{s_{h,\tau}}k\leq (1+7\varepsilon)\frac{s_{h,X}}{s_{h,\tau}}k$. Similarly, we obtain $\vert X_{\tau_{\theta_k}}\vert \geq  (1-7\varepsilon)\frac{s_{h,X}}{s_{h,\tau}}k$, so that for every $k\geq 2s_{h,\tau}k_0$, 
\begin{equation}\label{eqn:LLNXtauthetanproved}
 \left\vert \frac{\vert X_{\tau_{\theta_k}}\vert }{k}- \frac{s_{h,X}}{s_{h,\tau}}\right\vert \leq 6\varepsilon  \frac{s_{h,X}}{s_{h,\tau}}.
\end{equation}
Since $\varepsilon >0$ was arbitrary, the conclusion follows. 
\end{proof}

\noindent
\begin{proof}[Proof of~\eqref{eqn:propCLT}.] By~\eqref{eqn:maxtaubounded}, it is enough to show~\eqref{eqn:propCLT} for $X_{\tau_{\theta_k}}$ instead of $X_k$. We have 
\begin{equation}\label{eqn:CLTfirstfrac}
\frac{\vert X_{\tau_{\theta_k}}\vert }{k}-s_h =\frac{\vert X_{\tau_{\theta_k}}\vert }{\tau_{\theta_k}}\times \frac{\tau_{\theta_k}}{k} -s_h. 
\end{equation}
Defining
\begin{equation}\label{eqn:Witildeidef}
\tilde{\tau}_i:=\tau_i-\tau_{i-1}-s_{h,\tau},\,  \widetilde{X}_i:=\vert X_{\tau_{i}}\vert -\vert X_{\tau_{i-1}}\vert -s_{h,X}\text{ and }W_i=\frac{\widetilde{X}_i}{s_{h,X}}-\frac{\tilde{\tau}_i}{s_{h,\tau}}
\end{equation}
for $i\geq 1$, we remark that
\begin{equation}\label{eqn:fracdevelop}
\frac{ \vert X_{\tau_{\theta_k}}\vert }{\tau_{\theta_k}}=\frac{\theta_ks_{h,X}+\sum_{i=1}^{\theta_k} \widetilde{X}_i }{\theta_ks_{h,\tau}+\sum_{i=1}^{\theta_k}\tilde{\tau}_i} =s_h\frac{1+\theta_k^{-1} \sum_{i=1}^{\theta_k} \widetilde{X}_i/s_{h,X} }{1+\theta_k^{-1} \sum_{i=1}^{\theta_k} \tilde{\tau}_i/s_{h,\tau}}= s_h+\frac{s_h}{\theta_k} \sum_{i=1}^{\theta_k}  \left(\frac{\widetilde{X}_i}{s_{h,X}}-\frac{\tilde{\tau}_i}{s_{h,\tau}} \right) +r_k
\end{equation}
with $r_k=o(k^{-2/3})$ $\sPr_h$-w.h.p. Indeed, the CLTs~\eqref{eqn:CLThandtau} and~\eqref{eqn:CLThandtauX} ensure that $\sPr_h$-w.h.p., $\theta_k^{-1} \sum_{i=1}^{\theta_k} \tilde{\tau}_i=o(\theta_k^{-1/3})$ and $\theta_k^{-1} \sum_{i=1}^{\theta_k} \widetilde{X}_i=o(\theta_k^{-1/3})$, and we have $\liminf_{k\rightarrow +\infty}\theta_k/k >0$ by the LLN~\eqref{eqn:CLThandtau}. This ensures that we can stop the development of the fraction in~\eqref{eqn:fracdevelop} at the first order. Letting $W_i:=\frac{\widetilde{X}_i}{s_{h,X}}-\frac{\tilde{\tau}_i}{s_{h,\tau}}$ for $i\geq 1$, we thus have
\[
\frac{ \vert X_{\tau_{\theta_k}}\vert }{\tau_{\theta_k}}=s_h+\frac{1}{\theta_k}\sum_{i=1}^{\theta_k}W_i +r'_k.
\]
Note that $\theta_k^{-1/2}\sum_{i=1}^{\theta_k}W_i =O(1)$ $\sPr_h$-w.h.p.~by the CLTs~\eqref{eqn:CLThandtau} and~\eqref{eqn:CLThandtauX}, and that $\tau_{\theta_k}/k= 1+r'_k$ with $r'_k=O(k^{-2/3})$ $\sPr_h$-a.s.~by~\eqref{eqn:maxtaubounded}. Therefore,~\eqref{eqn:CLTfirstfrac} becomes
\begin{equation}\label{eqn:CLTbidouille}
\sqrt{k}\left( \frac{X_{\tau_{\theta_k}}}{k}-s_h  \right)=\sqrt{\frac{k}{\theta_k}} \frac{1}{\sqrt{\theta_k}}\sum_{i=1}^{\theta_k}W_i \,\, +r''_k=  \frac{\sqrt{\tau}}{\sqrt{\theta_k}}\sum_{i=1}^{\theta_k}W_i +r^{(3)}_k
\end{equation}
with $r''_k=o(1)$ and $r^{(3)}_k=o(1)$  $\sP^{h,+,\infty}_a$-w.h.p., and where we have used~\eqref{eqn:LLNthetainversetau} for the second equality. The proof of Proposition~\ref{prop:CLThtau} applies straightforwardly when replacing the sequences $(\tau_{i+1}-\tau_i)_{i\geq 1}$ and $(\vert X_{\tau_{i+1}}\vert -\vert X_{\tau_i}\vert )_{i\geq 1}$ by any of their affine combinations, in particular $W_i$. By~\eqref{eqn:Witildeidef} and by definition of $s_{h,X}$ and $s_{h,\tau}$ in Proposition~\ref{prop:CLThtau}, this entails the existence of $\sigma_{h,W}\geq 0$ such that under $\sPr_h$, 
\[
\frac{1}{\sqrt{k}}\sum_{i=1}^kW_i \overset{(d)}{\longrightarrow} \cN(0,\sigma_{h,W}^2)
\]
when $k\rightarrow +\infty$. As $\sPr_h$-a.s., $\theta_k\rightarrow +\infty$ as $k\rightarrow +\infty$, this combined to~\eqref{eqn:CLTbidouille} yields the CLT with $\sigma_h=\sqrt{\tau}\sigma_{h,W}$, and it only remains to check that $\sigma_{h,W}>0$. 
\\
\\
\textbf{Positivity of $\sigma_{h,W} $.} It is enough to prove that the variance of $\sum_{i=1}^kW_i$ grows at least linearly in $k$. In a nutshell, the variance of $W_i$ is bounded away of $0$ as soon as the renewal interval $Y_i$ has height at least $2$ (becase the SRW can 'wiggle' on its way from $X_{\tau_{i}}$ to $X_{\tau_{i+1}}$). This has a positive $\pi$-probability, hence a positive proportion of the first $k$ renewal intervals will satisfy this property w.h.p.~as $k\rightarrow+\infty$. To eliminate the effect of covariances, we use that the $W_i$'s are independent conditionally on the values of the $\phid(X_{\tau_i})$'s. 
\\
Let $\cM_2:=\{(T,W)\in \cM,\, \h(T)=2\text{ and $T$ has exactly two edges}\}$. For $k\geq 1$, let $\overline{W}_k$ be the distribution of $(\phid(X_{\tau_i}), \mathbf{1}_{\{Y_i\in \cM_2\}})_{0\leq i\leq k}$ under $\sPr_h$, and denote $\overline{\dE}_k$ the corresponding expectation. By the total variance formula and Proposition~\ref{prop:expomomentsrenewal} (which gives the independence of renewal intervals conditionally on the values of $\phid$ on the endpoints of each interval), we have 
\begin{equation}\label{eqn:variancetotalformula}
\text{Var}_{\sPr_h}\left(\sum_{i=1}^kW_i\right)\geq \overline{\dE}_k\left[\text{Var}_{\sPr_h}\left(\sum_{i=1}^k W_i\,\bigg|\, \overline{W_k}\right)\right] =\sum_{i=1}^k\text{Var}_{\sPr_h}\left(W_i\,\vert\, \overline{W_k}\right).
\end{equation}
Let $K>0$ be large enough such that $\pi([h,h+K]\times \cM)>0$. By~\eqref{eqn:reachabilitypositive} applied to $\cM_2$ (which is non-empty) instead of a fixed $(T,W)\in \cM$ and $I=[h,+\infty)$, there exists $\varepsilon>0$ such that $\pi([h,+\infty))> \varepsilon$. 
By Proposition~\ref{prop:CLThtau} applied to $f=\mathbf{1}_{Y\in [h,+\infty)\times \cM_2}$, there exists $k_0$ large enough such that for all $k\geq k_0$, 
\begin{equation}\label{eqn:varianceconditional}
\overline{\dP}_k( \vert \{i\leq k, \, Y_i\in [h,+\infty)\times \cM_2  \} \vert \geq \varepsilon k)\geq 1/2.
\end{equation}
Note that conditionally on $Y_i\in \cM_2$, $W_i\sim \frac{2}{s_{h,X}}- \frac{2U}{s_{h,\tau}}$ where $U\sim \text{Geom}(1/2)$, since in this case, $\widetilde{X}_i=h(Y_i)=2$ and the SRW does a geometric number of back-and-forths on the first edge of the renewal interval, before crossing the second edge only once (recall that by definition of renewal intervals, recall that the SRW goes once through $(X_{\tau_j-1},X_{\tau_j})$ for every $j\geq 1$). Hence the conditional variance of $W_i$ is bounded by below by some constant $\delta >0$. Therefore, for $k\geq k_0$,~\eqref{eqn:variancetotalformula} and~\eqref{eqn:varianceconditional} give
\[
\text{Var}_{\sPr_h}\left(\sum_{i=1}^kW_i\right)\geq \frac{1}{2}\times (\varepsilon k)\times \delta \geq \frac{\delta \varepsilon}{2}k,
\]
so that $\sigma_{h,W}\geq  \delta\varepsilon/2>0$. This concludes the proof. 
\end{proof}

\subsection{Proof of Theorem~\ref{thm:LLN}}\label{subsec:proofthmmain}

\begin{proof}[Proof of~\eqref{eqn:thmLLN}] The proof simply combines~\eqref{eqn:propLLN} with the monotonicity of the identity function and a classical diagonal argument. Let $\varepsilon\in (0,1)$. By Proposition~\ref{prop:expomomentsrenewal} and a union bound, when $k$ is large enough, then
\begin{center}
$\sPr_h(\cE_1(k))\geq 1-k^{-100}$  where $\cE_1(k):=\{\sup_{1\leq i\leq k}\tau_{i+1}-\tau_i \leq \varepsilon k/2 \}$. 
\end{center}
By Borel-Cantelli's Lemma,  there exists $\sPr_h$-a.s.~a (random) $k_0\in \dN$ such that $\cap_{k\geq k_0}\cE_1(k)$ holds. 
\\
Note also that~\eqref{eqn:propLLN} implies the $\sPr_h$-a.s.~existence of a (random) $k_0'\in \dN$ such that for all $k\geq k_0'$,  $\cE_2(k):=\{\max_{1\leq j \leq \lceil \varepsilon\rceil +3 }\vert \vert X_{\lfloor j\varepsilon k\rfloor}\vert /k -s_h j\varepsilon\vert \leq s_h\varepsilon \}$ holds.  
On $\cE_1(k)\cap \cE_2(k)$, for every $t\in [0,1]$, 
there exists $i\geq 0$ and $j\in [0,\lceil \varepsilon\rceil+3]$ such that $0\leq j\varepsilon k\leq \tau_i\leq  \lfloor kt\rfloor\leq (j+2)\varepsilon k$ and $j\varepsilon k\leq \lfloor kt\rfloor\leq (j+1)\varepsilon k$. Hence $\vert X_{\lfloor kt\rfloor}\vert \geq \vert X_{\tau_i}\vert \geq \vert X_{\lfloor j\varepsilon k\rfloor}\vert \geq s_h j\varepsilon k-s_h\varepsilon k\geq s_hkt-3s_h\varepsilon k$. We obtain similarly $\vert X_{\lfloor kt\rfloor}\vert \leq s_hkt+3s_h\varepsilon k$. 
\\
Thus, we have shown that for every $\varepsilon \in (0,1)$, there exists $\sPr_h$-a.s.~$k_0'':= k_0+k_0'$ so that we have for every $k\geq k_0''$: 
\begin{center}
$\sup_{0\leq t \leq 1} \vert\, \vert X_{\lfloor kt\rfloor}\vert /k- s_h t \vert \leq 4s_h\varepsilon $. 
\end{center}
Applying this argument to the sequence $(\varepsilon^m)_{m\geq 1}$ (which converges to $0$) and using that a countable intersection of sets of full measure is still of full measure yields the result.
\end{proof}

\begin{proof}[Proof of~\eqref{eqn:thmCLT}]
We proceed in two steps. First, we show the convergence of the finite-dimensional marginals, then we establish the tightness of the sequence $\left((\vert X_{\lfloor kt\rfloor}\vert )_{0\leq t\leq 1}\right)_{k\geq 0}$.
\\
\textbf{Finite dimensional marginals.} We claim that for every integer $m\geq 2$ and all $0<t_1<t_2<\ldots < t_m\leq 1$, under $\sPr_h$, 
\begin{equation}\label{eqn:fddCLT}
k^{-1/2}(\vert X_{\lfloor kt_1\rfloor }\vert -s_hkt_1, \ldots,\vert  X_{\lfloor kt_m\rfloor }\vert -s_hkt_m ) \overset{(d)}{\longrightarrow} (B_{t_1}, \ldots, B_{t_m}),
\end{equation}
where $B$ is a standard Brownian motion. 
For the sake of simplicity, we restrict ourselves to the case $m=2$ (the generic case will follow straightforwardly from our proof). We can reformulate~\eqref{eqn:fddCLT} as
\begin{equation}\label{eqn:fddCLT2coord}
k^{-1/2}(\vert X_{\lfloor kt_1\rfloor }\vert -s_hkt_1, \vert X_{\lfloor kt_2\rfloor }\vert -\vert X_{\lfloor kt_1\rfloor }\vert -s_hk(t_2-t_1))   \overset{(d)}{\longrightarrow}(W_1,W_2),
\end{equation}
where $(W_1,W_2)$ is a pair of independent centred Gaussian variables of variance $t_1$ and $t_2-t_1$ respectively.
By~\eqref{eqn:propCLT}, we already know that
\begin{equation}\label{eqn:fddCLT1stcoord}
k^{-1/2}(\vert X_{\lfloor kt_1\rfloor }\vert -s_hkt_1) \overset{(d)}{\longrightarrow}W_{1}.
\end{equation}
To show that $\vert X_{\lfloor kt_2\rfloor }\vert -\vert X_{\lfloor kt_1\rfloor }\vert $ is asymptotically independent of $\vert X_{\lfloor kt_1\rfloor }\vert $, we prove that with high probability, the SRW $(X_j)$ has a renewal time $\tau \in [\lfloor kt_1\rfloor+1, \lfloor kt_1\rfloor +k^{1/3}]$ with GFF value in $[h,h+\Cxi ]$ and the next renewal interval is taken according to $\nu_\star$ (which happens after a geometric number of visits to $[h,h+\Cxi ]$ at renewal times, by~\eqref{eqn:doeblincondition}). This allows the SRW to forget about its GFF value at time $\lfloor kt_1\rfloor$. We then apply~\eqref{eqn:propCLT} to $\vert X_{\lfloor kt_2\rfloor}\vert -\vert X_{\tau}\vert  $ under $\sPr_{\nu_\star^{(1)}}$, where $\nu_\star^{(1)}$ is the projection of $\nu_\star$ onto its first coordinate. Since $\vert \,\,\vert X_{\tau}\vert -\vert X_{\lfloor kt_1\rfloor}\vert \,\,\vert \leq \tau-\lfloor kt_1\rfloor=o(k^{1/2})$, this will conclude the proof of~\eqref{eqn:fddCLT2coord}.
\\
\\
In detail, let $i_0$ be the smallest positive integer such that $\tau_{i_0}\geq kt_1$ (we drop deliberately the dependency in $k$ in the notation). By~\eqref{eqn:doeblincondition}, we can realize $(X_j)_{j\geq \tau_{i_0}}$ in the following way, conditionnally on the value of $\phid(X_{\tau_{i_0}})$. Recusively for $i\geq i_0$, if $\phid(X_{\tau_{i}})>h+\Cxi $, we pick the renewal interval between $\tau_i$ and $\tau_{i+1}$ according to the distribution of the first renewal interval under $\sPr_{\phid(X_{\tau_i})}$. If $\phid(X_{\tau_{i}})\in [h,h+\Cxi ]$, let $U_i$ be a uniform random variable in $[0,1]$, independent from everything else. If $U_i\leq \alpha$, we sample $\phid(X_{\tau_{i+1}})$ according to $\nu_\star^{(1)}$. If $U_i>\alpha$, we sample $\phid(X_{\tau_{i+1}})$ according to an ad hoc probability measure $\nu_\star^{(1),\phid(X_{\tau_i})}$, which depends on $\phid(X_{\tau_i})$ (but not on $U_i$) and whose existence is guaranteed by~\eqref{eqn:doeblincondition}. Then, we sample the renewal interval between $\tau_i$ and $\tau_{i+1}$ according to the ad hoc distribution of a renewal interval conditionally on the value of $\phid$ at its extremities.
\\
Let $i_1:=\inf\{ i\geq 1, \, \phid(X_{\tau_i})\in [h,h+\Cxi ],\, U_i<\alpha\}$ . Then $(X_j)_{j\geq \tau_{i_1}}$ is distributed as the SRW on $\Chplus$ under $\sPr_{\nu_\star^{(1)}}$. As mentioned below~\eqref{eqn:fddCLT1stcoord}, we can apply~\eqref{eqn:propCLT} to $\vert X_{\lfloor kt_2\rfloor}\vert -\vert X_{\tau_{i_1}}\vert  $, so that to establish~\eqref{eqn:fddCLT2coord}, it suffices to prove that for $k$ large enough,
\begin{equation}\label{eqn:fddrenewalfast}
\sPr_h (\tau_{i_1}\leq kt_1+ k^{1/4}) \geq 1 - 1/k. 
\end{equation}
Let $\cE_1:=\{\max_{0\leq i\leq k} \tau_{i+1}-\tau_i \leq k^{1/10} \}\cap \{\max_{x\in B_{\Td}(\circ, k)}\phid(x)< k^{1/100} \}$. By Proposition~\ref{prop:expomomentsrenewal} for the first event, and by the exponential Markov inequality applied to a centred Gaussian variable of variance $(d-1)/(d-2)$, we have for $k$ large enough:  
\begin{equation}\label{eqn:fddprobaE1}
\sPr_h(\cE_1)\geq 1- k \Ci e^{-\Cii k^{1/60}} - d^k \exp\left( - \frac{k^{2/100}}{2(d-1)/(d-2)}\right)\geq 1- k^{-100} . 
\end{equation}
For $j\geq 1$, let $i'_j$ be the $j$-th renewal time after $kt_1$ such that $\phid(X_{\tau_{i'_j}})\leq h+\Cxi $. Note that we can choose $\Cxi >\frac{d-1}{d-2}(1+\dE[\Gamma']+\vert h\vert)$ in its definition in Lemma~\ref{lem:Vpotential}. Then, we apply~\eqref{eqn:excursionsaboveKbis} with $\varphi(Y)\leq k^{1/100}$, $K=\Cxi $ and $m\geq k^{1/99} $, and we have for all $k$ large enough:
\begin{equation}
\sPr_h(\cE_1 \cap \{ \max_{1\leq j\leq k }\tau_{i'_{j+1}}-\tau_{i'_j} \geq  k^{1/9}\}) \leq \dP^{\Td,\infty}(\cE_1 \cap \{\max_{1\leq j\leq k }i'_{j+1}-i'_j \geq  k^{1/99}\})\leq  k^{-10}.
\end{equation}
Similarly, noticing that $\tau_{i_0}\leq kt_1+k^{1/10}$ and $\phid(X_{\tau_{i_0}})<k^{1/100}$ on $\cE_1$, we have 
\begin{equation}
\sPr_h(\cE_1 \cap\{\tau_{i'_{1}} \geq kt_1 + k^{1/9}\} ) \leq \dP^{\Td,\infty}(\cE_1 \cap \{ \tau_{i'_{1}}\geq \tau_{i_0}-k^{1/10}+ k^{1/9}  \})\leq  k^{-10}.
\end{equation}
Combining these two estimates with~\eqref{eqn:fddprobaE1}, this yields 
\begin{equation}\label{eqn:fddclosecoord}
\sPr_h(\tau_{i'_{\lfloor \log^2 k\rfloor}}\leq kt_1 +k^{1/4} )\geq 1- k^{-2}. 
\end{equation}
Finally, we have 
\begin{equation}
\sPr_h(\tau_{i_1}\geq \tau_{i'_{\lfloor \log^2 k\rfloor}} )\leq (1-\alpha)^{\lfloor\log^{2}k\rfloor}\leq k^{-2}
\end{equation}
for $k$ large enough. Together with~\eqref{eqn:fddclosecoord}, this yields~\eqref{eqn:fddrenewalfast} and thus~\eqref{eqn:fddCLT2coord}.

\noindent
\textbf{Tightness.} We apply Theorems 17.4.2 and 17.4.4 from~\cite{MeynTweedie} to show that the rescaled sequence $(k^{-1/2}(\vert X_{\lfloor \tau_kt\rfloor}\vert-s_{h,X}kt)_{0\leq t\leq 1} )_{k\geq 1}$ satisfies a Donsker theorem. Thus, it enjoys regularity properties that we translate to  $(k^{-1/2}(\vert X_{\lfloor kt\rfloor}\vert-s_hkt )_{0\leq t\leq 1} )_{k\geq 1}$, using the stretched exponential bound on renewal times from Proposition~\ref{prop:expomomentsrenewal}.
\\
Precisely, it is enough to show that for any $\varepsilon >0$, there exists $\delta >0$ small enough such that for $k$ large enough, 
\begin{equation}\label{eqn:tightnessquantify}
\sPr_h (\max_{1\leq j\leq k, 1\leq i\leq \delta\sqrt{k}}\vert\, \vert X_{j+i}\vert-  \vert X_j\vert  -s_h i  \, \vert \leq \varepsilon\sqrt{k} ) \geq 1-\varepsilon. 
\end{equation}
Fix $\varepsilon >0$. By Theorem 17.4.2 of~\cite{MeynTweedie} and~\eqref{eqn:CLThandtauX}, the assumptions of Theorem 17.4.4 of~\cite{MeynTweedie} hold for the Markov chain $(Y_i)_{i\geq 1}$, and the maps $g(Y):=h(Y)-s_{h,X} $ and $g'(Y):=\tau(Y)-s_{h,\tau}$. As a consequence of this theorem, both sequences
\begin{center}
$k^{-1/2}\sigma_{h,X}^{-1}(\vert X_{\lfloor \tau_kt\rfloor}\vert-s_{h,X}kt)_{0\leq t\leq 1}$ and  $k^{-1/2}\sigma_{h,\tau}^{-1}(\vert \tau_{\lfloor kt\rfloor}\vert-s_{h,\tau}kt)_{0\leq t\leq 1}$ 
\end{center}
converge in distribution to standard Brownian motions on $[0,1]$. Using that almost every realization of the Brownian motion is uniformly continuous and recalling~\eqref{eqn:fddprobaE1},  
one gets easily the existence $\delta \in (0,\varepsilon)$ small enough such that for every $k$ large enough,
\begin{equation}\label{eqn:unifcontCLTtension}
\begin{split}
&\sPr_h(\cE_1\cap \cE_2)\geq 1-\varepsilon/2\text{, where }
\\
&\cE_2:=\left\{   \max_{1\leq j\leq k, 1\leq i\leq 2(s_{h,\tau}^{-1}+1)\delta\sqrt{k}}\vert\,  \vert X_{\tau_{j+i}}\vert -\vert X_{\tau_j}\vert  -s_{h,X}i  \, \vert +   \vert\,  {\tau_{j+i}}-{\tau_j} -s_{h,\tau}i  \, \vert        \leq \varepsilon\sqrt{k}/3  \right\}. 
\end{split}
\end{equation}
Assume now that $\cE_1\cap \cE_2$ holds for some fixed values of $\delta$ and $k$. Take $j\in \{1, \ldots, k \}$ and $i\in \{1, \ldots , \lfloor \delta \sqrt{k}\rfloor\}$. Let $j'\geq 1$ and $i'\geq 0$ be such that $\tau_{j'}$ (resp. $\tau_{j'+i'}$) is the smallest renewal time larger or equal to $j$ (resp. $j+i$). We have
\begin{align*}
\vert\, \vert X_{j+i}\vert -\vert X_j\vert - s_h i\,\vert  \leq & \vert j-\tau_{j'}\vert +\vert (j+i)-\tau_{j'+i'}\vert + \vert\, \vert X_{\tau_{j'+i'}}\vert -\vert X_{\tau_{j'}}\vert - s_h i\,\vert 
\\
\leq & k^{1/3}+  \vert\, \vert X_{\tau_{j'+i'}}\vert -\vert X_{\tau_{j'}}\vert - s_{h,X}i'\,\vert + \vert  s_{h,X}i' -  s_h i \,\vert 
\\
\leq & k^{1/3}+\varepsilon\sqrt{k}/3 +  s_{h,X} \vert \, i/s_{h,\tau} - i'\,\vert
\end{align*}
where the last inequality comes from~\eqref{eqn:unifcontCLTtension}. Indeed, we have $1\leq j'\leq k$ and $1\leq i'\leq i+1  \leq 2\delta \sqrt{k}$ (there are at most $ (j+i -j')+1\leq i+1$ renewal times between $j'$ and $j'+i'$ since $j'\geq j$ and by definition of $j'+i'$). Hence it only remains to show that 
\begin{equation}\label{eqn:iprimeDonsker}
\vert \, i/s_{h,\tau} - i'\,\vert \leq s_{h,\tau}^{-1}\varepsilon \sqrt{k}/2,
\end{equation}
which will follow from 
\begin{equation}\label{eqn:iprimeDonskertau}
\tau_{j'+i_1 } \leq  \tau_{j'+i' } \leq\tau_{j'+i_2}\text{ with }i_1=s_{h,\tau}^{-1}i- s_{h,\tau}^{-1}\varepsilon\sqrt{k}/2\text{ and }i_2= s_{h,\tau}^{-1}i+s_{h,\tau}^{-1}\varepsilon\sqrt{k}/2.
\end{equation}
We only prove the right inequality (as the left one can be showed in a similar way). We have $j'\leq k$ and $i_2\leq s_{h,\tau}^{-1}(\delta+\varepsilon)\sqrt{k} \leq 2(s_{h,\tau}^{-1}+1)\sqrt{k}$ since we chose $\delta <\varepsilon$. Thus we can apply~\eqref{eqn:unifcontCLTtension} and obtain that 
\[
\tau_{j'+i_2} - \tau_j'\geq s_{h,\tau}i_2 - \varepsilon\sqrt{k}/3\geq i+\varepsilon\sqrt{k}/6.
\]
This implies that $\tau_{j'+i_2}\geq \tau_j'+i+\varepsilon\sqrt{k}/6\geq j+i +\varepsilon\sqrt{k}/6\geq \tau_{j'+i'}$ by  and by definition of $j'+i'$, so that~\eqref{eqn:iprimeDonskertau} follows. This concludes the proof of~\eqref{eqn:iprimeDonsker} and~\eqref{eqn:tightnessquantify}, and thus of the theorem.
\end{proof}

\begin{appendix}
\section{Appendix}\label{appn}

\subsection{Proof of Proposition~\ref{lem:grimmettkesten}}\label{subsec:appendixgrimmettkesten}

Proposition~\ref{lem:grimmettkesten} is analogous to Lemma 1 in~\cite{GrimmettKesten} for Galton-Watson trees. In~\cite{GrimmettKesten}, the argument relies on the fact that $F'(q)<1$, where $F$ is the generating function associated to the reproduction law of a supercritical Galton-Watson tree (i.e. every individual has in average $>1$ children), and $q$ is the extinction probability. In fact, $F'(q)$ corresponds to the following quantity: for a given vertex $z$, it is the sum over each child $z'$ of $z$ of the probability that for every other child $z''$ of $z$, the subtree $T(z'')$ is finite. We want to transpose this to our setting of an infinite-type branching process. 
\\
The analogue of $F$ is the operator $R_h$ defined in \eqref{eqn:Rhdef}, and the analogue of $q$ is the function $q_h$ defined in \eqref{eqn:qhdef}. By Proposition~3.6 of \cite{ACregultrees}, at any $f\in L^2(\nu)$, $R_h$ has a Fréchet derivative $A_h^f$ given by 
\begin{equation}\label{eqn:Ahfg}
A_h^fg(a)= \mathbf{1}_{[h,+\infty)}(a)\cdot(d-1)\dE_Y\left[f\left(\frac{a}{d-1}+Y\right)\right]^{d-2}\dE_Y\left[g\left(\frac{a}{d-1}+Y\right)\right]^{}
\end{equation}
for $a\in \dR$, $g\in L^2(\nu)$ and $Y\sim \nu_1$. Then for $a\in\dR$ and $\delta\in (0,1)$, the analogue of $F'(q)$ in our context is $A_h^{q_{h,\delta}}g_h(a) $,  where we recall that $g_h:=\mathbf{1}_{[h,+\infty)}.$ 
\\
The quantity $A_h^{q_{h,\delta}}g_h(a) $ is the sum over the children $z$ of $\circ$ in $\Td^+$ of the $\dP^{\Td}_a$-probability that all the subtrees rooted at other children of $\circ$ in $\Td^+$ are not $\delta$-transient. Unfortunately, it absolutely not clear that there exists $\delta\in (0,1)$ such that $\sup_{a\geq h} A_h^{q_{h,\delta}}g_h(a)<1$. To remedy this, we proceed to a finite scaling, by looking at the $k$-offspring of $\circ$ for some $k$ large enough, instead of the children of $\circ$ (Lemma~\ref{lem:Akhsmall} below). 
\\

\noindent
For every $k\geq 2$, by the chain rule and a straightforward induction, $R_h^k$ has a Fréchet derivative $A_{k,h}^f:=A_h^{R_h^{k-1}f}\circ A_{k-1,h}^f$ at any $f\in L^2(\nu)$, so that $A_{k,h}$ is $A_h$ iterated $k$ times. 


\begin{lemma}\label{lem:Akhsmall}
Fix $\delta\in (0,\delta_0)$, where $\delta_0$ was defined in Proposition~\ref{prop:transienceintro}. There exists $\epsilon >0$ such that for large enough $k$, 
\begin{equation}\label{eqn:fprimeq}
\sup_{a\geq h}\vert A_{k,h}^{q_{h,\delta}}g_h(a) \vert <(1-\epsilon)^k.
\end{equation}
\end{lemma}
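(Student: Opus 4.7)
The plan is to express $A_{k,h}^{q_{h,\delta}} g_h(a)$ as an expectation over the cluster $\Chplus$, and then to combine two ingredients: the uniform upper bound $q_{h,\delta}(b) \leq 1 - \varepsilon$ on $[h,+\infty)$ from Proposition~\ref{prop:transienceintro}, and the sharp generation-size lower bound of order $\lambda_h^k$ on survival provided by Proposition~\ref{prop:Chlargedevgrowthrate}.

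Since $q_{h,\delta}(b)$ is bounded below by the extinction probability $q_h(b) > 0$ and bounded above by $1 - \varepsilon$ on $[h,+\infty)$, the perturbation $q_{h,\delta} + t g_h$ remains in $\cS_h$ for $t$ in a small neighbourhood of $0$. Applying~\eqref{eqn:Rhnintermsofchildren} along this perturbation path and using that $g_h \equiv 1$ on $[h,+\infty)$ together with $\phid(y) \geq h$ for every $y \in \cZ_k^{h,+}$, one obtains
\[
R_h^k(q_{h,\delta} + t g_h)(a) = \dE_a^{\Td}\left[\prod_{y \in \cZ_k^{h,+}}\bigl(q_{h,\delta}(\phid(y)) + t\bigr)\right].
\]
The deterministic bound $|\cZ_k^{h,+}| \leq (d-1)^k$ justifies differentiating under the expectation at $t=0$, and using $q_{h,\delta}(\phid(y')) \leq 1-\varepsilon$ for $y' \in \cZ_k^{h,+}$, setting $N := |\cZ_k^{h,+}|$,
\[
A_{k,h}^{q_{h,\delta}} g_h(a) \,=\, \dE_a^{\Td}\!\left[\sum_{y \in \cZ_k^{h,+}} \prod_{\substack{y' \in \cZ_k^{h,+} \\ y' \neq y}} q_{h,\delta}(\phid(y'))\right] \,\leq\, \dE_a^{\Td}\bigl[N (1-\varepsilon)^{N-1}\bigr].
\]

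To estimate this expectation, fix $\varepsilon_0 \in (0, \lambda_h - 1)$ and set $M_k := \lceil (\lambda_h - \varepsilon_0)^k \rceil$. The map $n \mapsto n(1-\varepsilon)^{n-1}$ is bounded by $1/\varepsilon$ everywhere and is decreasing for $n \geq M_k$ once $k$ is large. Splitting on $\{1 \leq N \leq M_k\}$ versus $\{N > M_k\}$ and applying Proposition~\ref{prop:Chlargedevgrowthrate} (to $\cZ_k^{h,+}$, as noted in Section~\ref{subsec:expogrowthCh}), the first contribution is at most $\varepsilon^{-1}\dP_a^{\Td}(1 \leq N \leq M_k) \leq \varepsilon^{-1} e^{-ck}$ uniformly in $a \geq h$, and the second is at most $M_k (1-\varepsilon)^{M_k - 1}$, which decays doubly-exponentially. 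Summing yields $A_{k,h}^{q_{h,\delta}} g_h(a) \leq c' e^{-c'' k}$ uniformly in $a \geq h$, and choosing the $\epsilon$ of the statement so that $1 - \epsilon > e^{-c''/2}$ gives the claim for $k$ large enough.

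The substantive step is the probabilistic identification of the Fréchet derivative: one must carefully check that~\eqref{eqn:Rhnintermsofchildren}, whose natural domain is $\cS_h$, applies along the perturbation path $q_{h,\delta} + t g_h$, and that differentiation under the expectation is legitimate. Once that formula is in hand, the rest is a direct interplay of the uniform transience estimate of Proposition~\ref{prop:transienceintro} with the sharp generation-size lower bound of Proposition~\ref{prop:Chlargedevgrowthrate}.
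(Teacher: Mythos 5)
Your proof is correct and follows essentially the same route as the paper's: identify the Fréchet derivative as $\dE_a^{\Td}[\sum_y \prod_{y'\neq y} q_{h,\delta}(\phid(y'))]$, use the uniform bound $q_{h,\delta}\le 1-\varepsilon$ on $[h,\infty)$ from Proposition~\ref{prop:transienceintro}, and split on the generation size $N=|\cZ_k^{h,+}|$ being small (controlled by the large-deviation bound of Proposition~\ref{prop:Chlargedevgrowthrate}) versus large (where $N(1-\varepsilon)^{N-1}$ is tiny). The paper thresholds at $k^2$ and you at $(\lambda_h-\varepsilon_0)^k$, but this is a cosmetic difference and both work.
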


\begin{proof}
For $\varepsilon>0$ and $k\geq 1$, one has by (\ref{eqn:Rhnintermsofchildren}) and the fact that $g_h(u)=1$ for all $u\geq h$:
\begin{align*}
R_h^k(q_{h,\delta}+&\varepsilon g_h)(a)=  \dE^{\Td}_a\left[\prod_{y\in \cZ_k^{h,+}}(q_{h,\delta}+\varepsilon g_h)(\phid(y)) \right]
\\
&=  R_h^kq_{h,\delta}(a)+  \varepsilon\dE^{\Td}_a\left[\sum_{y\in \cZ_k^{h,+}}\prod_{y'\in \cZ_k^{h,+}\setminus \{y\}}q_{h,\delta}(y') \right]+o_{\varepsilon\rightarrow 0}(\varepsilon).
\end{align*}
By (\ref{eqn:expomomentssize}) applied to $\cZ_k^{h,+}$, there exists a constant $C>0$ such that if $k$ is large enough, we have $\sup_{a\geq h}\dP_a^{\Td}(1\leq \vert \cZ_{k}^{h,+}\vert \leq k^2)\leq e^{-Ck}$. Moreover, $q_{h,\delta_1}$ is clearly non-negative, and is non-negative. Thus, $\sup_{b\geq h}\vert q_{h,\delta}(b)\vert= q_{h,\delta}(h)<1$. Hence, for $k$ large enough and for every $a\geq h$, 
\[ 
0\leq \dE^{\Td}_a\left[\sum_{y\in \cZ_k^{h,+}}\prod_{y'\in \cZ_k^{h,+}\setminus \{y\}}q_{h,\delta}(y') \right]\leq k^2e^{-Ck}+\max_{j\geq k^2}jq_{h,\delta}(h)^{j-1} \leq e^{-Ck/2}.
\]
Therefore,
\[
\vert R_h^k(q_{h,\delta}+\varepsilon g_h)(a)- R_h^k(q_{h,\delta})(a)\vert \leq \varepsilon e^{-Ck/2}+o_{\varepsilon\rightarrow 0}(\varepsilon)
\]
for $k$ large enough, so that if we set $\epsilon := 1-e^{-C/2}$, we get
\[
\vert A_{k,h}^{q_{h,\delta}}g_h(a)\vert\leq (1-\epsilon)^k.
\]

\end{proof}

\begin{proof}[Proof of Proposition~\ref{lem:grimmettkesten}]
Let $k_0\in \dN$ and $\epsilon >0$ such that (\ref{eqn:fprimeq}) holds with $\delta= \delta_0/2$, and let $\delta_1:=d^{2-{k_0}}\delta_0/2$. Let $C\in (0,(2k_0)^{-1})$. For $k\geq 1$ and $z\in \partial B_{\Td}(\circ, k)$, denote 
$$p_{a,k}:= \dP^{\Td}_a(z\in \Chplus\text{ and }E(z,\delta_1) \leq C k),
$$
which does not depend on the choice of $z$, by cylindrical symmetry of $\Td$. Since $\vert  \partial B_{\Td}(\circ, k)\vert =d(d-1)^{k-1}$, it is enough to prove that 
\begin{equation}\label{eqn:grimmettkestenpzk}
\limsup_{k\rightarrow +\infty} k^{-1}\sup_{a\geq h}\log p_{a,k} < -\log (d-1).
\end{equation}
\noindent
Assume for now that for all $j>i\geq 1$ and $a\geq h$, 
\begin{equation}\label{eqn:Aqhtranslatedintoprobas}
\dP^{\Td}\hspace{-1mm} \left(\hspace{-1mm} z_j\hspace{-1mm} \in \Chplus \text{and }\forall\ell\in [i,j-1],\, z_{\ell}\hspace{-0.5mm} \text{ is not a }\frac{d^{i-j+1}\delta_0}{2}\text{-exit}\bigg| \phid(z_i)=a, z_i\in \Chplus\hspace{-1mm} \right)\hspace{-1mm} \leq \hspace{-1mm} \frac{A_{j-i,h}^{q_{h,\delta_0/2}} g_h(a)}{(d-1)^{j-i}}.
\end{equation}
We will show this technical claim at the end of the proof. 
\\
We proceed to a union bound on the different possibilities for the $\delta_1$-exits on the path $\xi_z$. If $E(z,\delta_1)\leq Ck$, then there exists $u\leq Ck$ and $1\leq i_1<\ldots <i_u\leq k-1$ such that the $\delta_1$-exits on $\xi_z$ are exactly $z_{i_1}, \ldots, z_{i_u}$, where $z_i$ is the vertex on the path $\xi_z$ at height $i$. We first show that 
\begin{equation}\label{eqn:probadeltaexitsarezi}
\sup_{a\geq h}\dP^{\Td}_a(z\in \Ch \text{ and the $\delta_1$-exits of $\xi_z$ are }\{z_{i_1}, \ldots, z_{i_u}\}) \leq \left(\frac{(1-\epsilon)^{k_0}}{(d-1)^{k_0}}\right)^m\leq  \left(\frac{1-\epsilon}{d-1}\right)^{k(1-2Ck_0)}.
\end{equation}
%
%
%
%
Since $u\leq Ck$, we claim that there exist at least $k\frac{1-2Ck_0}{k_0}$ disjoint integer intervals of length $k_0$ in $\{1, \ldots, k\}$ that do not intersect $\{i_1,\ldots, i_u\}$. 
\\
Indeed, write $z_{i_0}:=1$ and $z_{i_{u+1}}:=k$. For every $r\in \{1, \ldots, u\}$,  if $z_{i_{r+1}}> z_{i_r}+k_0$, let $D_r:=\{z_{i_r}+1,\ldots, z_{i_r}+k_0\lfloor (z_{i_{r+1}}-1-z_{i_r})/k_0\rfloor \}$, else let $D_r=\emptyset$. The $D_r$'s are clearly disjoint with $\vert D_r\vert \geq z_{i_{r+1}}-z_{i_r}-k_0$ for all $r$, so that 
\begin{center}
$\vert \cup_{1\leq r\leq u}D_r\vert =\sum_{r=1}^u \vert D_r\vert \geq \sum_{r=1}^u (z_{i_{r+1}}-z_{i_r}-k_0)\geq z_{i_{u+1}}-z_{i_1}-uk_0 \geq k-1-Ckk_0\geq k(1-2Ck_0)$.
\end{center}
Since $\vert D_r\vert $ is a multiple of $k_0$, $D_r$ can be split into $\vert D_r\vert /k_0$ disjoint subsets of $k_0$ consecutive integers. Summing over $r\in \{1, \ldots, u\}$, we get $(\sum_{r=1}^u\vert D_r\vert )/k_0\geq k\frac{1-2Ck_0}{k_0}$ such disjoint integer intervals. This proves our claim.
\\
Denote $I_1, \ldots, I_m$ those intervals for some $m\geq k\frac{1-2Ck_0}{k_0}$, and $\ell_1, \ldots, \ell_m$ their respective smallest element. We have 
\begin{align*}
&\sup_{a\geq h}\dP^{\Td}_a(z\in \Chplus \text{ and the $\delta_1$-exits of $\xi_z$ are }\{z_{i_1}, \ldots, z_{i_u}\})
\\
&\leq \prod_{r=1}^m \sup_{a\geq h}\dP^{\Td}(z_{\ell_r+k_0-1}\hspace{-1mm}\in \Chplus \text{and }\forall l\in\hspace{-1mm} [\ell_r,\ell_r+k_0-2],z_{l}\text{ is not a }\delta_1\text{-exit} \vert z_{\ell_r}\in \Chplus\hspace{-1mm}, \phid(z_{\ell_r})\hspace{-1mm}=\hspace{-1mm}a)
\\
&\leq \left(\frac{\sup_{a\geq h}A_{k_0,h}^{q_{h,\delta_0/2}}g_h(a)}{(d-1)^{k_0}}\right)^m
\end{align*}
by (\ref{eqn:Aqhtranslatedintoprobas}) with $i=\ell_r$ and $j=\ell_r+k_0-1$, and by definition of $\delta_1$. Then, by (\ref{eqn:fprimeq}) and by definition of $\epsilon$, we have
\[
\dP^{\Td}(z\in \Ch \text{ and the $\delta_1$-exits of $\xi_z$ are }\{z_{i_1}, \ldots, z_{i_u}\}) \leq \left(\frac{(1-\epsilon)^{k_0}}{(d-1)^{k_0}}\right)^m\leq  \left(\frac{1-\epsilon}{d-1}\right)^{k(1-2Ck_0)},
\]
and \eqref{eqn:probadeltaexitsarezi} follows.
\\
Second, note that  there are at most $\sum_{u=1}^{Ck}{k\choose u}$ choices for $i_1, \ldots i_u$. Since $C<1/2$, we have
$$\sum_{u=1}^{Ck}{k\choose u}\leq Ck{k\choose Ck}\leq \frac{k^k}{(Ck)^{Ck}(k-Ck)^{(1-C)k}}\leq (C^C(1-C)^{1-C})^{-k}$$ 
for $k$ large enough (depending on $C$), by Stirling's formula. By \eqref{eqn:probadeltaexitsarezi} and a union bound, we obtain
\[
\sup_{a\geq h} p_{a,k}\leq \left(\frac{1-\epsilon}{d-1}\right)^{k(1-2Ck_0)}(C^C(1-C)^{1-C})^{-k}.\leq \left(\frac{1}{d-1}\cdot \frac{1-\epsilon}{C^C(1-C)^{1-C}} \right)^k.
\]
\noindent
Since $\lim_{C\rightarrow 0^+} = \left(\frac{1-\epsilon}{d-1}\right)^{1-2Ck_0} C^{-C}(1-C)^{C-1}=\frac{1-\epsilon}{d-1}<1/(d-1)$, we can choose $C$ small enough such that for all $k$ large enough (depending on $C$), $\sup_{a\geq h}p_{a,k}\leq  \left(\frac{1-\epsilon/2}{d-1}\right)^k$, and (\ref{eqn:grimmettkestenpzk}) follows.
\\

\noindent
\textbf{Proof of \eqref{eqn:Aqhtranslatedintoprobas}:}
We prove this by induction on $j-i$. For the base case $j=i+1$, denote $z'_1, \ldots, z'_{d-2}$ the children of $z_i$ in $\Td$ that are not $z_{i+1}$, and $\cC_{1}, \ldots, \cC_{d-2}$ their respective subtrees in $\Chplus$ (which are possibly empty). Note that
\begin{center}
$\{z_{i+1}\in \Chplus,\, z_i\text{ is not a }\delta_0/2\text{-exit}\}$
\\
$=\{z_i\in \Chplus\}\cap \{ \phid(z_{i+1})\geq h\}\cap (\cap_{\ell=1}^{d-2}\{\cC_{\ell}\text{ is not $\delta_0/2$-transient}\})$. 
\end{center}
The subtrees $\cC_{1}, \ldots, \cC_{d-2}$ are i.i.d.~conditionally on $\phid(z_i)$, so that we obtain as desired:
\begin{align*}
\dP^{\Td}(z_{i+1}\in \Chplus,&\, z_i\text{ is not a }\delta_0/2\text{-exit}\vert\phid(z_i)=a, z_i\in \Chplus )
\\
&=\dP^{\Td}(\phid(z_{i+1})\geq h\vert \phid(z_i)=a)\dP^{\Td}(\cC_{1}\text{ is not $\delta_0/2$-transient}\vert \phid(z_i=a))^{d-2}
\\
&=\dE_Y\left[g_h\left(\frac{a}{d-1}+Y\right)\right]\dE_Y\left[q_{h,\delta_0/2}\left(\frac{a}{d-1}+Y\right)\right]^{d-2}
\\
&=\frac{A_h^{q_{h,\delta_0/2}}g_h(a)}{d-1}.
\end{align*}

\noindent
We proceed to the induction step. If (\ref{eqn:Aqhtranslatedintoprobas}) holds for some value $k\in \dN$ of $j-i$, let $i\in \dN$ and $j=i+k+1$. We have 
\begin{align*}
A_{j-i,h}^{q_{h,\delta_0/2}}g_h(a)&=A_h^{R_h^{k}q_{h,\delta_0/2}}\left(A_{k,h}^{q_{h,\delta_0/2}}g_h\right)(a)
\\
&= (d-1)\dE_Y\left[R_h^{k}q_{h,\delta_0/2}\left(\frac{a}{d-1}+Y\right) \right]^{d-2}\dE_Y\left[A_{k,h}^{q_{h,\delta_0/2}}g_h \left(\frac{a}{d-1}+Y\right)\right].
\end{align*}  
For every $1\leq \ell \leq d-2$, denote $O_{\ell}$ the $k$-offspring of $z'_{\ell}$ in $\Chplus$. For all $z'\in O_{\ell}$,  write $\cC_{z'}$ for the subtree from $z'$. On the one hand, for every $a'\geq h$, and every $\ell\in \{1, \ldots, d-2\}$,
\begin{align*}
R_h^{k}q_{h,\delta_0/2}(a')&=\dE^{\Td}_{a'}\left[\prod_{z'\in \cZ_k^{h,+}}q_{h,\delta_0/2}(z')\right]
\\
&=\dP^{\Td}(\forall  z'\in O_{\ell}, \cC_{z'}\text{ is not $\delta_0/2$-transient}\vert \phid(z'_{\ell})=a').
\end{align*}
Remark that if there exists $z'\in O_{\ell}$ such that $\cC_{z'}$ is $\delta_0/2$-transient, then $z'_{\ell}$ is $d^{-k}\delta_0/2$ transient, since a SRW starting at $z'_{\ell}$ has a probability at least $d^{-k}$ to hit $z'$ before $z'_{\ell}$, and then a probability at least $\delta_0/2$ to stay forever in $\cC_{z'}$ by the Markov property of the SRW. Therefore,
\begin{center} 
$\{\forall  z'\in O_{\ell}, \cC_{z'}\text{ is not $\delta_0/2$-transient}\}\supseteq \{\text{$z'_{\ell}$ is not $d^{-k}\delta_0/2$-transient}\} $. 
\end{center}
Thus, we have $
R_h^{k}q_{h,\delta_0/2}(a')\geq\dP^{\Td}(\text{$z'_{\ell}$ is not $d^{-k}\delta_0/2$-transient }\vert \phid(z'_{\ell})=a'),$
and 
\begin{align*}
\dE_Y\left[R_h^{k}q_{h,\delta_0/2}\left(\frac{a}{d-1}+Y\right) \right]^{d-2}&\geq \dP^{\Td}(\forall 1\leq \ell\leq d-2,\, \text{$z'_{\ell}$ is not $d^{-k}\delta_0/2$-transient } \vert \phid(z_i)=a)
\\
&\geq \dP^{\Td}(z_i\text{ is not a $d^{-k}\delta_0/2$-exit }\vert \phid(z_i)=a).
\end{align*}
On the other hand, by induction hypothesis, for every $a'\geq h$, 
\begin{align*}
\frac{A_{k,h}^{q_{h,\delta_0/2}}g_h(a')}{(d-1)^k}&\geq \dP^{\Td}\left(z_j\in \Ch \text{ and }\forall\ell\in [i+1,j-1],\, z_{\ell}\text{ is not a }d^{1-k}\delta_0/2\text{-exit}\bigg| \phid(z_{i+1})=a'\right)
\\
&\geq \dP^{\Td}\left(z_j\in \Ch \text{ and }\forall\ell\in [i+1,j-1],\, z_{\ell}\text{ is not a }d^{-k}\delta_0/2\text{-exit}\bigg| \phid(z_{i+1})=a'\right).
\end{align*}
Therefore, 
\[
\frac{A_{j-i,h}^{q_{h,\delta_0/2}}g_h(a)}{(d-1)^{k+1}}\geq \dP^{\Td}\left(z_j\in \Ch \text{ and }\forall\ell\in [i,j-1],\, z_{\ell}\text{ is not a }d^{-k}\delta_0/2\text{-exit} \bigg| \phid(z_i)=a\right),
\]
and this concludes the induction.
\end{proof}

\subsection{Looking for an invariant measure for the walk}\label{subsec:invarmeasurenomore}
As mentioned in the introduction, we could not prove the existence of an invariant measure for $\Ch$ rooted at the position $X_k$ of the random walker. In this section, we show more precisely that the method of~\cite{LPPergodic} for Galton-Watson trees does not adapt to the SRW $(X_k)_{k\geq 0}$ on $\Ch$. Let us also mention that the lack of independence in the structure of $\Ch$, and the fact that the distribution of a given subtree depends on the value of the GFF at its root also prevented us to adapt the argument of~\cite{Aidekon} for the biased random walk on Galton-Watson trees (although we do not detail this here).
\\
Let $\cE$ be the set obtained from $\dR^{\Td}$ by identifying every $u\in \dR^{\Td}$ with each $v\in \dR^{\Td}$ that can be obtained from $u$ by swapping two subtrees of $\Td$ whose roots have the same parent. Informally, $\cE$ is the set of real sequences indexed by $\Td$ up to cylindrical symmetry. 
\\
Suppose that there is an invariant measure $\mu_{WALK}$ on $\cE$ that describes the values of the GFF as seen from $X_n$, $n\geq 0$. Precisely, we introduce a random shift operator $\theta$ on $\cE$ similar to \cite{LPPergodic}, that for each fixed real sequence $u=(u_x)_{x\in \Td}$ chooses a uniform neighbour $x$ of $\circ$ such that $u_x\geq h$ and moves the root to $x$, hence $\theta(u)=(u_{\Phi(x)})_{x\in \Td}$ where $\Phi$ is a rooted isomorphism from $\Td$ to itself with $\Phi(\circ)=x$ (if no such neighbour exists, $\theta(u)=u$). An \textbf{isomorphism} between two rooted trees $T$ and $T'$ is a bijection $\Phi:T\rightarrow T'$ preserving the root and the height, and such that for all vertices $x,y\in T$, there is an edge between $x$ and $y$ if and only if there is an edge between $\Phi(x)$ and $\Phi(y)$.

This defines a Markov chain on $\cE$. Then $\mu_{WALK}$ is an invariant measure for this chain. We impose an additional constraint, due to the GFF: 
\begin{equation}\label{eq:muwalkgff}
\begin{split}
&\text{for every $z\in \dR$, on $\cE_z:=\{u\in \cE,\, u_{\circ}=z\}$, $\mu_{WALK}(\cdot \vert u_{\circ}=z)$ coincides with }
\\
&\text{the distribution induced by $(\phid(x))_{x\in \Td}$ conditionally on $\phid(\circ)=z$.}
\end{split}
\end{equation}
Let $\widetilde{\cE}:=\{u\in \cE, \, u_{\circ}\geq h\text{ and }\max_{x:\,\h(x)=1}u_x\geq h\}$ be the subset of $\cE$ where the SRW can make at least one step. Clearly, $\widetilde{\cE}$ is invariant under $\theta$, and we denote $\widetilde{\mu}_{WALK}$ the invariant measure on $\widetilde{\cE}$ induced by $\mu_{WALK}$. 
\\
We show that there is no invariant measure $\widetilde{\mu}_{WALK}$ such that \eqref{eq:muwalkgff} holds and $u_{\circ}$ has a density w.r.t.~$\widetilde{\mu}_{WALK}$. Assume by contradiction that $\widetilde{\mu}_{WALK}$  exists, and denote $\mu_{\circ}$ the marginal distribution of $u_{\circ}$.
\\
Denote $x_1, \ldots, x_{d}$ the neighbours of $\circ$. Under $\mu_{WALK}$, for all $z\geq h$, conditionally on $u_{\circ}=z$, the $u_{x_i}$'s are i.i.d.~with distribution $L_z:=\frac{z}{d-1}+\cN(0,\frac{d}{d-1})$. Denote $L_{z,h}$ the law of a variable $Y\sim L_z$ conditionally on $Y\geq h$.
\\
Going from $\widetilde{\mu}_{WALK}$ to $\mu_{WALK}$ amounts to further conditioning on the fact that at least one of the $u_{x_i}$'s is at least $h$, and a SRW starting at $\circ$ will make its first step to a vertex $x$ such that $u_x\sim L_{z,h}$. By invariance of $\widetilde{\mu}_{WALK}$, this forces $\mu_{\circ}=\int L_{z,h}d\mu_{\circ}(z)$, hence for all $z\geq h$, 
\begin{equation}\label{eq:muwalkfirstconstraint}
\mu_{\circ}(z)=\int_h^{\infty}\mu_{\circ}(t)q_t^{-1} \exp\left(-\frac{d-1}{2d}\left(z-\frac{x}{d-1}\right)^2\right) dt,
\end{equation}
where $q_t:=\sqrt{2\pi d/(d-1)}\dP(Y\geq h)$ for $Y\sim L_t$. If $x_i$ is the vertex where the SRW makes its first step and denoting $x_{i,1}\ldots x_{i,d-1}$ its $d-1$ children, then for all $t\geq h$, conditionally on $u_{x_i}=t$, the $u_{x_{i,j}}$'s are i.i.d.~with distribution $L_{t}$. Again by invariance of $\widetilde{\mu}_{WALK}$, the (unordered) $d$-uplets $(u_{x_1}, \ldots, u_{x_d})$ and $(u_{x_{i,1}}, \ldots, u_{x_{i,d-1}},u_{\circ})$ have the same distribution. Thus by the remarks above \eqref{eq:muwalkfirstconstraint}, conditionally on $u_{x_i}=t$, $u_{\circ}\sim L_{t,h}$. Combining this with \eqref{eq:muwalkfirstconstraint}, we obtain for all $z\geq h$:
\begin{equation}\label{eq:muwalksecondconstraint}
\mu_{\circ}(z)=\int_h^{\infty} \left( \mu_{\circ}(z)q_z^{-1} \exp\left(-\frac{d-1}{2d}\left(t-\frac{z}{d-1}\right)^2\right)  \right) q_{t}^{-1} \exp\left(-\frac{d-1}{2d}\left(z-\frac{t}{d-1}\right)^2\right)dt.
\end{equation}
This simplifies to
\[
q_z=\int_h^{\infty}q_t^{-1} \exp\left(-\frac{d-1}{2d}\left(\left(z-\frac{t}{d-1}\right)^2+\left(t-\frac{z}{d-1}\right)^2\right)\right) dt.
\]
Note that the map $t\mapsto q_t$ from $[h,\infty)$ to $[0,1]$ is non-decreasing, that $q_h>0$ and that $\lim_{t\rightarrow \infty}q_t=\sqrt{2\pi d/(d-1)}$. Hence, for all $z\geq h$, we must have 
\[
1\leq q_z\leq q_h^{-1}\int_h^{\infty}\exp\left(-\frac{d-1}{2d}\left(\left(z-\frac{t}{d-1}\right)^2+\left(t-\frac{z}{d-1}\right)^2\right)\right) dt, 
\]
so that 
\[
I_z:=\int_h^{\infty}\exp\left(-\frac{d-1}{2d}\left(\left(z-\frac{t}{d-1}\right)^2+\left(t-\frac{z}{d-1}\right)^2\right)\right) dt\geq q_h >0.
\]
By expanding the squares in the integral and using that $t^2+z^2\geq 2 \vert tz\vert$ for all $t,z\in \dR$, we get that 
$I_z\leq \int_h^{\infty}\exp\left(-\frac{(d-2)^2}{2d(d-1)}(z^2+t^2)\right)dt$, so that $\lim_{z\rightarrow \infty}I_z=0 $. Therefore, 
 \eqref{eq:muwalksecondconstraint} does not hold. Hence, the desired invariant measure $\widetilde{\mu}_{WALK}$ does not exist.
\\

\end{appendix}

\bibliographystyle{alpha}
\bibliography{gffregulgraph}

\end{document}